\documentclass[11pt,a4paper]{article}
\usepackage[T1]{fontenc}
\usepackage[utf8]{inputenc}
\usepackage{lmodern}
\usepackage[margin=2.5cm]{geometry}
\usepackage{amsmath,amssymb,amsthm}
\usepackage{enumitem}
\usepackage[hidelinks]{hyperref}
\newcommand{\subiso}{%
	\mathrel{\vtop{\offinterlineskip\ialign{##\cr
				$\subset$\cr
				\noalign{\kern 0.2ex}
				$\sim$\cr}}}
}

\theoremstyle{plain}
\newtheorem{theorem}{Theorem}
\newtheorem{proposition}{Proposition}
\newtheorem{lemma}{Lemma}

\theoremstyle{remark}
\newtheorem{example}{Example}
\newtheorem{remark}{Remark}
\newtheorem{corollary}{Corollary}

\theoremstyle{definition}
\newtheorem{definition}{Definition}

\title{Symmetric Bases and $\ell^p$ Embeddings in Musielak--Orlicz Sequence Spaces under $\delta_2$}
\author{Valentin Georgiev \and
Boyan Zlatanov}
\date{}

\begin{document}
\maketitle
\begin{center}
\small
Faculty of Mathematics and Informatics\\
University of Plovdiv ``Paisii Hilendarski''\\
24 Tzar Asen, Plovdiv 4000, Bulgaria\\[4pt]
\texttt{valentingeorgiev@uni-plovdiv.bg}\\
\texttt{bobbyz@uni-plovdiv.bg}
\end{center}

\begin{abstract}
We study isomorphic embeddings into Musielak--Orlicz sequence spaces \(\ell_\Phi\), where the normalized generating sequence \(\Phi=\{\phi_n\}\) satisfies the \(\delta_2\) condition. Using admissible normalized blocks, we construct an associated limiting family \(\mathcal C_{\Phi,\tau}\) of Orlicz functions. If a Banach space \(X\) with a symmetric basis embeds into \(\ell_\Phi\), then there is a function \(\psi\in\mathcal C_{\Phi,\tau}\) such that the basis of \(X\) is equivalent to the canonical basis of \(\ell_\psi\). An Orlicz sequence space \(\ell_\psi\) embeds if and only if \(\psi\) is equivalent near zero to a member of \(\mathcal C_{\Phi,\tau}\); in particular, \(\ell^p\) embeds exactly when such a member is equivalent to \(t^p\). Assuming differentiability of the generators on a common interval near zero, we also derive the necessary bound \(p\leq b_{\Phi,(c_n),\delta}\), where the upper growth index incorporates the summable errors in the \(\delta_2\) condition. The same index yields cotype \(q\) for every \(q\geq2\) with \(q>b_{\Phi,(c_n),\delta}\). Examples with nonuniform growth and lacunary weights illustrate this bound. For alternating Nakano exponents, the explicit limiting family gives a sharper restriction than the index alone. For general Nakano exponents, we recover the known exact cotype criterion, including attainment of the optimal exponent. For weighted Orlicz spaces, we express the cotype criterion through summable scalar errors and recover the formula using the global upper growth index when \(w\in\mathcal A\).
\end{abstract}

\noindent\textbf{Keywords:} Musielak-Orlicz sequence space, $\delta_2$ condition, isomorphic embedding, weighted Orlicz space, Nakano space, cotype.

\noindent\textbf{MSC Classification:} 46B03, 46B45.

\section{Introduction}
The structure of subspaces of Orlicz sequence spaces has long been studied through their unit-vector bases and the growth of the defining functions. The work of Lindenstrauss and Tzafriri \cite{LindenstraussTzafriri71} and Lindberg \cite{Lindberg73} provides classical points of departure for this question. Woo \cite{Woo73} developed the theory of modular sequence spaces, while Yamamuro \cite{Yamamuro54} investigated growth properties of their generating functions. For Musielak--Orlicz sequence spaces, Katirtzoglou \cite{Katirtzoglou98} related growth conditions to type and cotype and treated, among other cases, Nakano and weighted Orlicz spaces.

The isomorphic embedding problem is particularly sensitive to the distinction between a copy of \(\ell^p\) and a coordinate or lattice copy. Peirats and Ruiz \cite{Peiratz92} studied \(\ell^p\)-copies in Musielak--Orlicz sequence spaces using block sequences and limiting families of normalized dilations. In the weighted setting, Hern\'andez and Peirats \cite{HernandezPeirats88} examined weighted sequence subspaces of Orlicz function spaces isomorphic to \(\ell^p\), whereas Fuentes and Hern\'andez \cite{FuentesHernandez88} investigated the geometry of weighted Orlicz sequence spaces and the role of the weights. Zlatanov \cite{Zlatanov07} subsequently gave a sufficient condition for an isomorphic \(\ell^p\)-embedding under a different set of hypotheses. These results motivate embedding criteria and numerical necessary bounds that can be checked directly on a given sequence of generators.

Variable-exponent spaces provide a related perspective. Hern\'andez and Ruiz \cite{HernandezRuiz12} studied the \(\ell^q\)-structure of Nakano function spaces, including lattice-isomorphic copies. More recently, Ansorena and Bello \cite{AnsorenaBello26} investigated subsymmetric basic sequences and complemented copies in variable-exponent Lebesgue spaces. These settings are related to, but have different embedding requirements from, the arbitrary isomorphic copies considered here. Musielak--Orlicz, Nakano, and weighted Orlicz sequence spaces also arise in other geometric investigations; for example, Georgiev, Zhelinski, and Zlatanov \cite{GeorgievZhelinskiZlatanov26} studied a perturbation method in these spaces.

For the embedding results, we assume the \(\delta_2\) condition for the specified Musielak--Orlicz generators and introduce a limiting family built from admissible normalized blocks. We show that a Banach space with a symmetric basis embedded in \(\ell_\Phi\) admits an Orlicz representation from this family. Conversely, every member of the family generates an Orlicz sequence space isomorphic to a block subspace of \(\ell_\Phi\). This yields a necessary and sufficient condition for an Orlicz sequence space to embed into \(\ell_\Phi\), formulated through equivalence at zero to a member of \(\mathcal C_{\Phi,\tau}\). We then obtain a necessary condition for embedded \(\ell^p\)-spaces in terms of an upper growth index that incorporates the summable errors allowed by \(\delta_2\). The differential estimate defining this index also gives a power growth condition with summable errors. Combined with the cotype characterization in \cite[Theorem~9(a)]{Katirtzoglou98}, it yields cotype \(q\) whenever \(q\geq2\) and \(q>b_{\Phi,(c_n),\delta}\). The weighted and alternating-exponent examples illustrate what the family and the index detect. For general Nakano spaces, we recover the scalar cotype criterion of Blasco and Gregori \cite[Theorem~4.3]{BlascoGregori01} by computing the least summable errors in the power growth condition. This also identifies when the optimal cotype is attained. We also give an explicit form of the weighted cotype criterion and compute it for power functions and for a function whose cotype changes with the weights. The global growth characterization for \(w\in\mathcal A\) is treated as a consequence of the known cotype theory; weighted cotype was also studied by Maleev and Zlatanov \cite{MaleevZlatanov00}. We distinguish the results proved here from the characterizations recalled from \cite{Peiratz92} and from the previously established summable approximation method of \cite{FuentesHernandez88}.

\section{Preliminaries}\label{sec:preliminaries}

We use \(\mathbb R_+=[0,\infty)\). The space \(C[0,1]\) consists of the real-valued continuous functions on \([0,1]\), equipped with the uniform norm \(\|f\|_\infty=\sup_{0\leq t\leq1}|f(t)|\).

Throughout the paper, the symbols \(M\), \(\psi\), \(\varphi\), and \(\theta\) denote individual Orlicz functions. Upper-case symbols such as \(\Phi=\{\phi_n\}\) and \(\Psi=\{\psi_n\}\) denote Musielak--Orlicz functions. Accordingly, \(\ell_\psi\) denotes an Orlicz sequence space, whereas \(\ell_\Phi\) denotes a Musielak--Orlicz sequence space.

We use parentheses for sequences of scalars and for sequences of vectors in Banach spaces, in particular in Musielak--Orlicz sequence spaces, writing \((a_n)_{n=1}^{\infty}\) and \((x_n)_{n=1}^{\infty}\), respectively. Sequences of Orlicz functions are written with braces, as in \(\Phi=\{\phi_n\}_{n=1}^{\infty}\). When the index range is clear and no ambiguity can arise, we use the shorter notation \((a_n)\), \((x_n)\), and \(\{\phi_n\}\).

For Banach spaces \(X\) and \(Y\), we write \(X\subiso Y\) when \(X\) is isomorphic to a closed subspace of \(Y\), and \(X\cong Y\) when the two spaces are isomorphic.

Following \cite{Lindestrauss96}, a sequence \((x_n)\) in a Banach space \(X\) is a Schauder basis of \(X\) if every \(x\in X\) has a unique norm-convergent expansion \(x=\sum_{n=1}^{\infty}a_nx_n\). It is a basic sequence if it is a Schauder basis of its closed linear span \([x_n]\). A basis is normalized if \(\|x_n\|=1\) for every \(n\). Two basic sequences \((x_n)\) and \((y_n)\) are equivalent if, for every scalar sequence \((a_n)\), the series \(\sum_{n=1}^{\infty}a_nx_n\) converges if and only if \(\sum_{n=1}^{\infty}a_ny_n\) converges.

Following \cite{Lindestrauss96}, a block basic sequence of \((x_n)\) is a sequence of nonzero vectors \(u_j=\sum_{n=p_{j-1}+1}^{p_j}a_nx_n\), where \(0=p_0<p_1<\cdots\) are integers and \((a_n)\) are scalars. A basic sequence \((x_n)\) is symmetric if, for every permutation \(\pi\) of \(\mathbb N\), \((x_{\pi(n)})\) is equivalent to \((x_n)\), in accordance with \cite{Lindestrauss96}.

For a subsequence \((w_{n_k})\), the indices \(n_k\) are strictly increasing, and \(w_{n_k}\to0\) means \(\lim_{k\to\infty}w_{n_k}=0\); no monotonicity is implied. We write \(w_{n_k}\uparrow\infty\) when \(w_{n_{k+1}}\geq w_{n_k}\) for all \(k\) and \(\lim_{k\to\infty}w_{n_k}=+\infty\), and \(w_{n_k}\downarrow0\) when \(w_{n_{k+1}}\leq w_{n_k}\) for all \(k\) and \(\lim_{k\to\infty}w_{n_k}=0\). The same conventions apply to \(w_n\) and to other indexed sequences such as \(v_k\), with the limit taken as the displayed index tends to infinity.

\subsection{Orlicz and Musielak--Orlicz sequence spaces}

Following \cite{Katirtzoglou98}, a function $M:\mathbb R_+\to\mathbb R_+$ is called an Orlicz function if it is continuous and convex, $M(0)=0$, and $M(x)>0$ for every $x>0$. A sequence $\Phi=\{\phi_n\}_{n=1}^{\infty}$ of Orlicz functions is called a Musielak--Orlicz function. Its modular is \(\rho_\Phi(x)=\sum_{n=1}^{\infty}\phi_n(|x_n|).\) The associated Musielak--Orlicz sequence space is
\[
\ell_\Phi = \left\{x=(x_n):\rho_\Phi(\lambda x)<\infty \text{ for some }\lambda>0\right\},
\]
endowed with the Luxemburg norm \(\|x\|_\Phi = \inf\left\{r>0: \rho_\Phi(x/r)\leq1\right\}.\) The corresponding Musielak--Orlicz heart is
\[
h_\Phi = \left\{x=(x_n):\rho_\Phi(\lambda x)<\infty \text{ for every }\lambda>0\right\}.
\]

If $\phi_n=M$ for every $n$, then $\ell_\Phi$ is the Orlicz sequence space $\ell_M$.

The complementary function of an Orlicz function \(M\) is its Fenchel conjugate, \(M^*(y)=\sup_{t\geq0}\{ty-M(t)\}\) for \(y\geq0\). It is understood as a generalized Young function and may take the value \(+\infty\).

\subsection{Growth conditions and equivalence}

An individual Orlicz function \(M\) satisfies the \(\Delta_2\) condition at zero if there exist constants \(K_M>0\) and \(t_0>0\) such that \(M(2t)\leq K_M M(t)\) for every \(0\leq t\leq t_0\). It satisfies the global \(\Delta_2\) condition if there exists \(K_M>0\) such that this inequality holds for every \(t\geq0\). Throughout the paper, an unqualified reference to the \(\Delta_2\) condition for an individual Orlicz function means the global condition; the local condition is explicitly referred to as the \(\Delta_2\) condition at zero.

An Orlicz function \(M\) satisfies the \(\Delta_2\) condition at infinity if there exist constants \(K_M>0\) and \(t_0>0\) such that \(M(2t)\leq K_M M(t)\) for every \(t\geq t_0\). This is the growth condition used in \cite[Theorem~3 and Proposition~6]{FuentesHernandez88} for summable weights satisfying the tail condition recalled in \eqref{eq:FH-weight-condition} below. It is distinct from the condition at zero and from the global condition defined above.

A Musielak--Orlicz function $\Phi=\{\phi_n\}$ satisfies the condition $\delta_2$ if there exist $K,\delta>0$ and a nonnegative sequence $(c_n)\in \ell^1$ such that, for every $n\in\mathbb N$ and $x\geq0$, the inequality $\phi_n(x)\leq\delta$ implies
\begin{equation}\label{eq:delta2}
	\phi_n(2x) \leq K \phi_n(x) + c_n.
\end{equation}

For the finite-valued Orlicz functions considered here, the standard characterization is \(\ell_\Phi=h_\Phi\) if and only if \(\Phi\in\delta_2\); see \cite{Katirtzoglou98}.

For individual Orlicz functions \(M\) and \(N\), we write \(M\sim N\) if there exist \(a,b,t_0>0\) such that \(M(at)\leq N(t)\) and \(N(bt)\leq M(t)\) for every \(0\leq t\leq t_0\). This is equivalence at zero. Equivalently, the canonical unit-vector sequences of \(\ell_M\) and \(\ell_N\) are equivalent.

The compactness and block arguments use the given generators and their right derivatives. Differentiability is used in Theorem~\ref{th:upper-index} and Corollary~\ref{cor:cotype-upper-index}.

\subsection{Cotype and a power growth condition}\label{sec:cotype-preliminaries}

Following \cite[p.~658]{BlascoGregori01}, a Banach space \(X\) has cotype \(q\), where \(2\leq q<\infty\), if there exists \(C>0\) such that, for every \(N\in\mathbb N\) and every finite sequence \((x_j)_{j=1}^{N}\) in \(X\),
\[
\left(\sum_{j=1}^{N}\|x_j\|^q\right)^{1/q}
\leq C\int_0^1\left\|\sum_{j=1}^{N}r_j(t)x_j\right\|\,dt.
\]
Here \(r_j(t)=\operatorname{sgn}(\sin(2^j\pi t))\), \(j\geq1\), are the Rademacher functions on \([0,1]\); their values at the zeros of the sine may be chosen arbitrarily. The constant \(C\) is independent of \(N\) and of the vectors. We write
\[
\operatorname{cot}(X)
=\inf\{q\in[2,\infty):X\text{ has cotype }q\},
\]
with \(\inf\varnothing=\infty\). This infimum need not be attained.

Following \cite{Katirtzoglou98}, for \(1<r<\infty\) we write \(\Phi\in\delta^r\) if there exist \(H\geq1\), \(\eta>0\), and a nonnegative sequence \((d_n)\in\ell^1\) such that, for every \(n\in\mathbb N\), \(t\geq0\), and \(\lambda\geq1\),
\begin{equation}\label{eq:delta-power-cotype}
\phi_n(\lambda t)\leq\eta
\quad\Longrightarrow\quad
\phi_n(\lambda t)\leq H\lambda^r\bigl(\phi_n(t)+d_n\bigr).
\end{equation}
The threshold in this condition is imposed on \(\phi_n(\lambda t)\). By \cite[Theorem~9(a)]{Katirtzoglou98}, for \(2\leq q<\infty\),
\begin{equation}\label{eq:cotype-characterization}
\ell_\Phi\text{ has cotype }q
\quad\Longleftrightarrow\quad
\Phi\in\delta^q.
\end{equation}
\par

\subsection{Classical limiting families}

Let \(\phi\) be an Orlicz function satisfying the global \(\Delta_2\) condition. The following subsets of \(C[0,1]\) will be used to describe the limiting behavior of normalized dilations of \(\phi\):
\[
\begin{aligned}
F_{\phi,\lambda}&=\left\{\frac{\phi(sx)}{\phi(s)}:0<s\leq\lambda\right\},
& E_{\phi,\lambda}&=\overline{F_{\phi,\lambda}},\\
E_\phi&=\bigcap_{\lambda>0}E_{\phi,\lambda},
& E_\phi(0,\infty)&=\overline{\bigcup_{\lambda>0}E_{\phi,\lambda}},\\
C_\phi&=\overline{\operatorname{co}}E_\phi,
& C_\phi(0,\infty)&=\overline{\operatorname{co}}E_\phi(0,\infty).
\end{aligned}
\]
where $\operatorname{co}$ is the convex hull. With the exception of $F_{\phi, \lambda}$, all of them are compact subsets of $C[0,1]$.

For Musielak--Orlicz functions, we use the following families from \cite{Peiratz92}. Let $(B_\alpha)_{\alpha\in I}$ be a family of subsets of $C[0,1]$. We denote by $\Gamma((B_\alpha)_{\alpha\in I})$ the set of all finite convex sums
\(\sum_{i=1}^N a_i f_{\alpha_i}\), for \(a_i>0\) and \(\sum_{i=1}^N a_i=1\),
where the indices $\alpha_1,\ldots,\alpha_N$ are distinct and $f_{\alpha_i}\in B_{\alpha_i}$. 

For a normalized Musielak--Orlicz function $\Phi=\{\phi_n\}$, that is, $\phi_n(1)=1$, set
\[
F_{\phi_n,a} = \left\{\frac{\phi_n(sx)}{\phi_n(s)}: 0<s\leq a\right\}, \qquad a>0,
\]
and define the classical limiting family
\begin{equation}\label{eq:classical-limiting-family}
	C_{\Phi,a}
	=
	\bigcap_{n=1}^{\infty}
	\overline{\Gamma((F_{\phi_j,a})_{j=n}^{\infty})},
\end{equation}
where the closures are taken in $C[0,1]$. For an individual Orlicz function $M$, the symbols $C_M$ and $C_M(0,\infty)$ denote the corresponding classical limiting families at zero and on the whole positive half-line, respectively; see \cite{Peiratz92}.

\subsection{Weighted Orlicz spaces}

Let $w=(w_n)$ be a sequence of positive weights. The weighted Orlicz sequence space and its heart are, respectively,
\[
\ell_M(w) = \left\{x=(x_n): \sum_{n=1}^{\infty}w_nM(\lambda|x_n|)<\infty \text{ for some }\lambda>0\right\}
\]
and
\[
h_M(w) = \left\{x=(x_n): \sum_{n=1}^{\infty}w_nM(\lambda|x_n|)<\infty \text{ for every }\lambda>0\right\}.
\]

It is generated by the Musielak--Orlicz function
\begin{equation}\label{eq:weighted-musielak-function}
	\phi_n^w(x)=w_nM(x).
\end{equation}

For the limiting-family arguments, we use the normalization appearing in the proof of \cite[Proposition~5]{FuentesHernandez88}. The inverse \(M^{-1}\) is the ordinary inverse on \(\mathbb R_+\): if \(0<s<t\), convexity and \(M(0)=0\) give \(M(s)\leq(s/t)M(t)<M(t)\), while \(M(t)\geq tM(1)\) for \(t\geq1\). Thus \(M\) is continuous, strictly increasing, and unbounded. Set \(s_n=M^{-1}(1/w_n)\) and define
\begin{equation}\label{eq:normalized-weighted-function}
	\widehat\phi_n^w(t)=w_nM(s_nt).
\end{equation}

Then \(\widehat\phi_n^w(1)=1\). The diagonal map \(D\), defined by \((Dx)_n=x_n/s_n\), is a linear isometry from \(\ell_M(w)\) onto \(\ell_{\widehat\Phi^w}\), where \(\widehat\Phi^w=\{\widehat\phi_n^w\}\). Indeed, for every scalar sequence \(x\) and every \(r>0\), \(\rho_{\widehat\Phi^w}(Dx/r) =\sum_{n=1}^{\infty}w_nM\left(\frac{|x_n|}{r}\right).\)

Following Hern\'andez \cite{Hernandez85}, we say that a positive weight sequence \(w=(w_n)\) belongs to the class \(\mathcal A\) if it has a subsequence \((w_{n_k})\) such that \(w_{n_k}\to0\) and \(\sum_{k=1}^{\infty}w_{n_k}=\infty\). The same class is denoted by \(\Lambda\) in \cite[Section~``Methods'']{Zlatanov13}, where this definition is also attributed to \cite{Hernandez85}.

For the monotone regimes \(w_n\uparrow\infty\) and \(w_n\downarrow0\), we recall from Peirats and Ruiz \cite{Peiratz92} the following identities for the classical limiting family associated with \(\widehat\Phi^w\):
\begin{equation}\label{eq:C1equivCinfty}
	C_{\widehat\Phi^w,1}=C_M
	\quad\text{if }w_n\uparrow\infty,
	\qquad
	C_{\widehat\Phi^w,1}=C_M(0,\infty)
	\quad\text{if }w_n\downarrow0.
\end{equation}

These identities also apply to coordinate subsequences. More precisely, let \(v=(v_k)\), where \(v_k=w_{n_k}\) for a strictly increasing sequence \((n_k)\). Then \(C_{\widehat\Phi^v,1}=C_M\) if \(v_k\uparrow\infty\), whereas \(C_{\widehat\Phi^v,1}=C_M(0,\infty)\) if \(v_k\downarrow0\). The space \(\ell_M(v)\) is isometric to the coordinate subspace of \(\ell_M(w)\) supported on \(\{n_k:k\in\mathbb N\}\), and the definitions yield \({C_{\widehat\Phi^v,1}\subseteq C_{\widehat\Phi^w,1}}\). In particular, the existence of a subsequence \(w_{n_k}\uparrow\infty\) alone does not imply \({C_{\widehat\Phi^w,1}=C_M}\). 

The universality property for weights tending to zero with divergent sum goes back to the work of Hern\'andez \cite{Hernandez81}; see the explicit attribution in \cite[p.~587]{FuentesHernandez88}. For \(w\in\mathcal A\), applying this property to a subsequence \(z_k=w_{n_k}\) with \(z_k\to0\) and \(\sum_kz_k=\infty\), and then using the corresponding coordinate inclusion into \(\ell_M(w)\), gives, for every positive weight sequence \(v\),
\begin{equation}\label{eq:universality-A}
	\ell_M(v)\subiso \ell_M(w).
\end{equation}

For summable weights, Fuentes and Hern\'andez \cite[Proposition~1, Theorem~3, and Proposition~6]{FuentesHernandez88} consider the class specified by
\begin{equation}\label{eq:FH-weight-condition}
    \sum_{n=1}^{\infty}w_n<\infty,
    \qquad
    \liminf_{n\to\infty}\frac{\sum_{k>n}w_k}{w_n}>0.
\end{equation}
Condition~\eqref{eq:FH-weight-condition} will be used in the comparison with their results and in the discussion of Example~\ref{ex:lacunary-weighted}.
\par

The weight conditions above play different roles. Convergence \(w_n\to0\), even monotonically, does not determine whether \(\sum_nw_n\) converges or diverges. Under the additional assumption \(w_n\to0\), one has \(w\in\mathcal A\) if and only if \(\sum_nw_n=\infty\): in the divergent case the whole sequence supplies the required subsequence, whereas every subsequence of a positive summable sequence is summable. In general, membership in \(\mathcal A\) requires convergence to zero and a divergent sum only along a subsequence. It implies \(\sum_nw_n=\infty\), but does not require the whole sequence to tend to zero or to be monotone.

For example, \(w_n=1/n\) decreases to zero and belongs to \(\mathcal A\). The sequence \(w_n=2^{-n}\) also decreases to zero, but is summable and therefore does not belong to \(\mathcal A\). It satisfies \eqref{eq:FH-weight-condition}, since \(\sum_{k>n}2^{-k}=2^{-n}\). The lacunary sequence \(w_n=2^{-2^n}\) is likewise decreasing and summable, but \((\sum_{k>n}w_k)/w_n\to0\), as verified in Example~\ref{ex:lacunary-weighted}; hence it lies outside \(\mathcal A\) and fails \eqref{eq:FH-weight-condition}.

For a positive summable sequence, the tail requirement in \eqref{eq:FH-weight-condition} means that there exist \(c>0\) and \(N\in\mathbb N\) such that \(\sum_{k>n}w_k\geq c w_n\) for all \(n\geq N\). Thus it requires the sum of the subsequent weights to remain at least a fixed positive fraction of the current weight. This is an additional restriction beyond summability and is not a consequence of monotone convergence to zero.

Finally, \(w_n\uparrow\infty\) excludes membership in \(\mathcal A\), but the existence of a subsequence increasing to infinity does not. Indeed, the sequence defined by \(w_{2k}=1/k\) and \(w_{2k-1}=k\) belongs to \(\mathcal A\) through its even coordinates and has an odd subsequence increasing to infinity. Accordingly, the monotone regimes in \eqref{eq:C1equivCinfty}, the universality hypothesis in \eqref{eq:universality-A}, and the summable-weight condition \eqref{eq:FH-weight-condition} must be kept distinct.
\par

For \(w\in\mathcal A\), the weighted generators \(\Phi^w=\{w_nM\}\) satisfy \(\Phi^w\in\delta_2\) if and only if \(M\) satisfies the global \(\Delta_2\) condition; see \cite{Katirtzoglou98}. Theorem~\ref{th:weighted-Orlicz-characterization} below assumes both \(M\in\Delta_2\) and \(w\in\mathcal A\). The former gives \(\widehat\Phi^w\in\delta_2\) and is the growth hypothesis in Theorem~\ref{th:LMinfty}; the latter gives the universality property~\eqref{eq:universality-A}. No growth condition on \(M^*\) is required.

	\begin{theorem}[{\cite[Theorem~2.7(i)]{Peiratz92}}]\label{th:weighted-Orlicz-characterization}
		Let \(M\) be an Orlicz function satisfying the global \(\Delta_2\) condition, let \(w\in\mathcal A\), and let \(\psi\) be an Orlicz function. Then the following assertions are equivalent:
		\begin{enumerate}[label=(\roman*)]
			\item \(\ell_\psi\subiso\ell_M(w)\);
			\item there exists \(\theta\in C_M(0,\infty)\) such that \(\psi\sim\theta\).
		\end{enumerate}
	\end{theorem}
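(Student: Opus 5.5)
The plan is to pass to the normalized space and combine the general block-subspace characterization with the weighted identities \eqref{eq:C1equivCinfty} and the universality property \eqref{eq:universality-A}.

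First I reduce to normalized generators. The diagonal map \(D\) is an isometry of \(\ell_M(w)\) onto \(\ell_{\widehat\Phi^w}\), and \(M\in\Delta_2\) gives \(\widehat\Phi^w\in\delta_2\). So assertion (i) is equivalent to \(\ell_\psi\subiso\ell_{\widehat\Phi^w}\). By the general embedding criterion (Theorem~\ref{th:LMinfty}, in the form of \cite{Peiratz92}), this holds if and only if \(\psi\) is equivalent at zero to some member of \(C_{\widehat\Phi^w,1}\). It therefore suffices to show that, modulo equivalence at zero, the family of Orlicz functions \(\theta\) with \(\ell_\theta\subiso\ell_M(w)\) coincides with \(C_M(0,\infty)\) when \(w\in\mathcal A\).

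For (ii)\(\Rightarrow\)(i), I take \(\theta\in C_M(0,\infty)\) with \(\psi\sim\theta\). I choose any positive weight sequence \(v\) with \(v_k\downarrow0\), for instance \(v_k=1/k\). By \eqref{eq:C1equivCinfty}, \(C_{\widehat\Phi^v,1}=C_M(0,\infty)\), so \(\theta\in C_{\widehat\Phi^v,1}\). The converse direction of the block criterion, applied to \(\widehat\Phi^v\in\delta_2\), gives \(\ell_\theta\subiso\ell_{\widehat\Phi^v}\cong\ell_M(v)\). Universality \eqref{eq:universality-A} then yields \(\ell_M(v)\subiso\ell_M(w)\). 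Finally, \(\psi\sim\theta\) gives \(\ell_\psi\cong\ell_\theta\) with equivalent unit vector bases, which proves (i).

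For (i)\(\Rightarrow\)(ii), the criterion gives \(\theta\in C_{\widehat\Phi^w,1}\) with \(\psi\sim\theta\). It remains to show \(C_{\widehat\Phi^w,1}\subseteq C_M(0,\infty)\) for arbitrary weights. By definition, each element of \(F_{\widehat\phi^w_n,1}\) has the form
\[
\frac{w_nM(s_n s x)}{w_nM(s_n s)}=\frac{M(ux)}{M(u)},\qquad u=s_ns>0.
\]
Hence it lies in \(\bigcup_{\lambda>0}F_{M,\lambda}\subseteq E_M(0,\infty)\). Finite convex combinations of such functions lie in \(\operatorname{co}E_M(0,\infty)\), and closures stay in \(C_M(0,\infty)\). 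Intersecting over \(n\) keeps this inclusion. So \(\theta\in C_M(0,\infty)\).

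The main obstacle is checking that the cited block criterion really applies with the parameter \(a=1\) and the families \(\Gamma\) built with distinct indices. In particular, I need that the family characterizing embeddings equals \(C_{\widehat\Phi,1}\) exactly, and not some variant depending on \(\tau\) whose relation to \(C_{\widehat\Phi,1}\) must still be checked. I will also verify that \(\ell_\theta\) is well defined for limit functions \(\theta\). These functions are convex and vanish at \(0\) by uniform limits, and they are positive on \((0,1]\) because the \(\Delta_2\) bound \(M(ux)/M(u)\geq c\,x^{p}\) with \(p=\log_2K_M\) persists under the limit.
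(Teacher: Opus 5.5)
Your direction (i)$\Rightarrow$(ii) is sound once the citation is repaired. Theorem~\ref{th:LMinfty} concerns disjoint functions in $L_M(0,\infty)$ and is not a criterion for $\ell_{\widehat\Phi^w}$. The necessity you need is available elsewhere: Proposition~2.3 of \cite{Peiratz92}, or Theorem~\ref{prop:1}\ref{th:1.2} together with Lemma~\ref{lm:admissible-inclusion}. Your computation $F_{\widehat\phi^w_n,1}=\{M(ux)/M(u):0<u\leq s_n\}$ then correctly gives $C_{\widehat\Phi^w,1}\subseteq C_M(0,\infty)$ for arbitrary positive weights.

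The gap is in (ii)$\Rightarrow$(i). The implication $\theta\in C_{\Phi,1}\Rightarrow\ell_\theta\subiso\ell_\Phi$, which you use as the \emph{converse direction of the block criterion}, is not available and is false in general. In Remark~\ref{rem:strict-inclusion}, with $M$ globally $\Delta_2$, one has $t^2\in C_{\widehat\Phi^w,1}$ while $\ell_M(w)\cong\ell^1$. It fails even for weights decreasing to zero, which is precisely your situation. Your argument uses about $v$ only $v_k\downarrow0$ and \eqref{eq:C1equivCinfty}, never $\sum_kv_k=\infty$, so it applies verbatim to $v_k=2^{-k}$. Take $M=M_p$ with $p>2$ as in Example~\ref{ex:weighted-cotype-dependence}. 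Then $t^p\in C_{M_p}(0,\infty)=C_{\widehat\Phi^v,1}$, and your step would give $\ell^p\subiso\ell_{M_p}(2^{-k})$. That space has cotype $2$, so this is impossible. For $v_k=1/k$ the step is true, but only because $v\in\mathcal A$; it is the theorem itself for that weight, so the reasoning is circular. The converse actually proved in the paper, Theorem~\ref{prop:1}\ref{th:1.2}, concerns $\mathcal C_{\widehat\Phi^v,\tau}$, which can be strictly smaller than $C_{\widehat\Phi^v,1}$. This is exactly the obstacle you flagged and left unresolved.

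What is missing is a step in which the divergent sum of small weights is genuinely used. Within the paper's framework, you can show $C_M(0,\infty)\subseteq\mathcal C_{\widehat\Phi^w,\tau}$ for $w\in\mathcal A$. To approximate $\sum_i\beta_iM(y_ix)/M(y_i)$ in a given tail, do the following:
\begin{enumerate}
\item Choose disjoint finite groups $G_i$ of indices from a subsequence $w_{n_k}\to0$ with $\sum_kw_{n_k}=\infty$, and put $r_j=y_i/(\tau s_j)$ for $j\in G_i$.
\item The resulting admissible function is $\sum_i\alpha_iM(y_ix)/M(y_i)$, where $\alpha_i$ is proportional to $M(y_i)W_i$ with $W_i=\sum_{j\in G_i}w_j$. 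Admissibility reads $\sum_iM(y_i/\tau)W_i=1$.
\item Because the weights are small and their sum diverges, the $W_i$ can approximate any prescribed positive values.
\item One extra index then absorbs the remaining small defect in the admissibility equation, at a uniformly small cost because $A\geq a$.
\end{enumerate}
Theorem~\ref{prop:1}\ref{th:1.2} then yields (ii)$\Rightarrow$(i) directly, without the detour through $v$.

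Alternatively, use the ingredients the paper assembles before the statement, which it cites from \cite{Peiratz92} without giving its own proof. By Theorem~\ref{th:LMinfty}, the unit vector basis of $\ell_\theta$ is equivalent to a disjoint sequence in $L_M(0,\infty)$. Approximating this sequence by disjoint simple functions places $\ell_\theta$ in some $\ell_M(v)$. Only then does \eqref{eq:universality-A} transfer it into $\ell_M(w)$.
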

	
	Taking \(\psi(t)=t^p\) gives the following corollary.
	
	\begin{corollary}\label{cor:weighted-lp-characterization}
		Under the assumptions of Theorem~\ref{th:weighted-Orlicz-characterization}, let \(1\leq p<\infty\). Then the following assertions are equivalent:
		\begin{enumerate}[label=(\roman*)]
			\item \(\ell^p\subiso\ell_M(w)\);
			\item there exists \(\theta\in C_M(0,\infty)\) such that \(x^p\sim\theta\).
		\end{enumerate}
	\end{corollary}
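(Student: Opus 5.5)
This is a direct specialization of Theorem~\ref{th:weighted-Orlicz-characterization}, so the plan is to instantiate that theorem with \(\psi(t)=t^p\) and then identify \(\ell_\psi\) with \(\ell^p\).

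First, check that \(\psi(t)=t^p\) is an admissible Orlicz function in the sense of the preliminaries for every \(1\leq p<\infty\). It is continuous and convex on \(\mathbb R_+\), satisfies \(\psi(0)=0\), and \(\psi(t)>0\) for \(t>0\). The remaining hypotheses of Theorem~\ref{th:weighted-Orlicz-characterization} are that \(M\) satisfies the global \(\Delta_2\) condition and \(w\in\mathcal A\). These are exactly the assumptions carried over in the corollary, so the theorem applies.

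Second, note that for \(\psi(t)=t^p\) the modular is \(\sum_n|x_n|^p\), and the Luxemburg norm equals \(\bigl(\sum_n|x_n|^p\bigr)^{1/p}\). Hence \(\ell_\psi=\ell^p\) isometrically. Consequently \(\ell^p\subiso\ell_M(w)\) if and only if \(\ell_\psi\subiso\ell_M(w)\), because isomorphic embeddability is unaffected by replacing a space with an isometric copy. Assertion (i) of the theorem then becomes assertion (i) of the corollary. Assertion (ii) of the theorem reads: there exists \(\theta\in C_M(0,\infty)\) with \(t^p\sim\theta\). This is assertion (ii) of the corollary, since the variable name is immaterial.

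There is essentially no obstacle here. The only point needing a word of care is the identification \(\ell_\psi=\ell^p\) with equal norms, which follows in one line from the definition of the Luxemburg norm. No use of the limiting-family machinery beyond the cited theorem is required.
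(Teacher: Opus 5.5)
Your proposal is correct and matches the paper's argument: the corollary follows by taking \(\psi(t)=t^p\) in Theorem~\ref{th:weighted-Orlicz-characterization}, and the paper says only that. Your checks that \(t^p\) is an Orlicz function and that \(\ell_\psi=\ell^p\) isometrically are the implicit details behind that one-line specialization.
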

	
	\subsection{Other Preliminary Results}
	
	We also use the following standard block-basis reduction.
	
	\begin{proposition}[\cite{Peiratz92}]\label{prop:peiratz1}
		Let $X$ be a locally bounded space with a Schauder basis $(e_n)$. If $(d_n)$ is a symmetric basic sequence in $X$, then $(d_n)$ is equivalent to a block basic sequence of $(e_n)$.
	\end{proposition}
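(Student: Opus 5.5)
The plan is the classical gliding-hump perturbation argument combined with the uniform boundedness of the symmetric constant. Let \((e_n)\) be the basis of \(X\), which we may take normalized since \(X\) is locally bounded and the \(d_n\) are nonzero. Write \(P_N\) for the basis projections and \(K=\sup_N\|P_N\|\), which is finite by the standard argument (in the quasi-Banach case one uses the quasi-norm and an Aoki--Rolewicz \(r\)-norm, so that the perturbation series below is controlled by \(\sum_k\varepsilon_k^r\)).

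\textbf{Step 1: reduction to a weakly null sequence.} Write \(d_n=\sum_i e_i^*(d_n)e_i\). By a diagonal argument, pass to a subsequence \((d_{n_k})\) such that each coordinate \(e_i^*(d_{n_k})\) converges as \(k\to\infty\). Because \((d_n)\) is symmetric, every subsequence of it is equivalent to the whole sequence. This is where symmetry is used: it guarantees that passing to subsequences, and later to differences, does not change the equivalence class.

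Next, form the differences \(u_k=d_{n_{2k}}-d_{n_{2k-1}}\). Each coordinate of \(u_k\) tends to \(0\). Symmetry, in its unconditional form with bounded sign changes, together with subsymmetry, should show that \((u_k)\) is equivalent to \((d_n)\). The precise justification is that for a symmetric basic sequence the map \(\sum a_k d_{n_{2k}}\mapsto \sum a_k(d_{n_{2k}}-d_{n_{2k-1}})\) is bounded with bounded inverse: apply a sign change and then the spreading equivalence between \((d_{n_{2k}})\), \((d_{n_{2k-1}})\) and \((d_n)\). One must also check that \(\|u_k\|\) stays bounded below, which follows from unconditionality.

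\textbf{Step 2: gliding hump.} Set \(u=u\) and choose \(k_1<k_2<\cdots\) and integers \(p_0<p_1<\cdots\) inductively so that \(\|(I-P_{p_j})u_{k_j}\|\) and \(\|P_{p_{j-1}}u_{k_j}\|\) are both less than \(\varepsilon_j\). This is possible because \(P_{p_{j-1}}u_k\to0\) as \(k\to\infty\) (finitely many coordinates, each tending to \(0\)), and because of norm convergence of the expansion of a fixed \(u_{k_j}\). Then put \(z_j=(P_{p_j}-P_{p_{j-1}})u_{k_j}\), which is a block basic sequence of \((e_n)\) with \(\|u_{k_j}-z_j\|<2\varepsilon_j\).

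\textbf{Step 3: small perturbation.} Take the \(\varepsilon_j\) small relative to the basis constant of \((u_{k_j})\) and \(\inf\|u_k\|\); the principle of small perturbations (or its \(r\)-normed version) then gives \((z_j)\sim(u_{k_j})\). Finally, \((u_{k_j})\sim(u_k)\sim(d_n)\) again by symmetry.

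The main obstacle is Step 1. Here one must ensure that differencing preserves equivalence with \((d_n)\) and stays seminormalized, and in the quasi-Banach setting that the unconditionality constants are available. I would handle this by citing the fact that a symmetric basis is unconditional and subsymmetric (Lindenstrauss--Tzafriri). If that becomes delicate, I would instead avoid differencing whenever \((d_n)\) is already weakly null coordinatewise.
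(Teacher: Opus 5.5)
The paper gives no proof of this proposition. It is quoted from Peirats and Ruiz \cite{Peiratz92} and applied only in the Banach space $\ell_\Phi$, so there is no argument in the paper to compare yours with.

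Your proof is correct and self-contained, and it follows the standard route:
\begin{enumerate}
\item extract a coordinatewise convergent subsequence;
\item pass to the differences $u_k=d_{n_{2k}}-d_{n_{2k-1}}$ to get a coordinatewise null sequence;
\item run the gliding hump;
\item finish with small perturbations.
\end{enumerate}

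The key equivalence $(u_k)\sim(d_n)$ holds as you indicate. The upper estimate follows from the (quasi-)triangle inequality together with the equivalence of $(d_{n_{2k}})$ and $(d_{n_{2k-1}})$ with $(d_n)$. The lower estimate follows from unconditionality: $2\sum_k a_kd_{n_{2k}}$ is the sum of $\sum_k a_ku_k$ and a sign change of it.

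Differencing means no weak-topology argument is needed. In the Banach case one could instead split into ``$(d_n)$ weakly null'' versus ``$(d_n)$ equivalent to the $\ell^1$ basis''. Your version therefore works verbatim in locally bounded spaces. There the perturbation step uses an $r$-norm and only the biorthogonal functionals on $[u_{k_j}]$, so no Hahn--Banach extensions are required.

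Two points should be made explicit.

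First, everything rests on a single constant controlling all permutations. That constant gives the upper and lower bounds on $\|d_n\|$, uniform subsymmetry (equivalence with subsequences), and unconditionality. Lindenstrauss--Tzafriri prove this for Banach spaces. In the quasi-Banach setting the same argument carries over, but you should invoke it rather than cite the Banach-space statement.

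Second, the $\varepsilon_j$ must be fixed before the indices $k_j$ are chosen. So you need bounds on the basis constants and biorthogonal functionals of all subsequences of $(u_k)$ at once. These bounds exist because every subsequence of $(u_k)$ is again a sequence of differences of disjoint pairs of the $d_n$. It is therefore equivalent to $(d_n)$ with constants depending only on the symmetry and unconditionality constants.

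Finally, the phrase ``Set $u=u$'' in Step 2 is a leftover and should be deleted.
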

	
	For a measurable function \(f\) on \((0,\infty)\), set \(\rho_M(f)=\int_0^\infty M(|f(t)|)\,dt\). The Orlicz function space \(L_M(0,\infty)\) consists of equivalence classes, modulo equality almost everywhere, of measurable functions \(f\) such that \(\rho_M(f/r)<\infty\) for some \(r>0\). It is equipped with the Luxemburg norm \(\|f\|_M=\inf\{r>0:\rho_M(f/r)\leq1\}\).
	
	\begin{theorem}[{\cite[Theorem~1.3(i)]{Peiratz92}}]\label{th:LMinfty}
		Let \(M\) be an Orlicz function satisfying the global \(\Delta_2\) condition, and let \(X\) be a Banach space with a normalized symmetric basis \((e_i)\). Then \((e_i)\) is equivalent to a sequence of mutually disjoint functions in \(L_M(0,\infty)\) if and only if there exists \(\phi\in C_M(0,\infty)\) such that \(X\) is isomorphic to \(\ell_\phi\).
	\end{theorem}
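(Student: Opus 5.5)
The plan is to prove both implications of Theorem~\ref{th:LMinfty} by going back and forth between disjointly supported functions in \(L_M(0,\infty)\) and Musielak--Orlicz sequence spaces whose generators are normalized dilations of \(M\). The global \(\Delta_2\) condition keeps these dilations in the compact sets \(E_{M,\lambda}\) and \(E_M(0,\infty)\) of \(C[0,1]\), and lets us discard the behaviour of the functions away from the relevant range.

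\emph{Sufficiency.} Suppose first that \(\phi\in E_M(0,\infty)\). Then \(\phi\) is a uniform limit on \([0,1]\) of dilations \(M(s_kx)/M(s_k)\). For each \(k\) take the function \(f_k=s_k\chi_{A_k}\), where the sets \(A_k\) are disjoint and \(|A_k|=1/M(s_k)\). Then \(\rho_M(\sum a_kf_k)=\sum_k M(s_k|a_k|)/M(s_k)\), so \((f_k)\) spans a Musielak--Orlicz space with generators \(M(s_kx)/M(s_k)\).

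Choosing the \(s_k\) so that the uniform errors are summable, a perturbation argument makes this space coincide with \(\ell_\phi\). The perturbation uses \(\Delta_2\) to pass from uniform closeness to summable modular errors. The dilation parameters themselves converge, and this is exactly what produces the closure over all \(\lambda>0\).

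Now let \(\phi=\sum_i a_i\phi_i\) be a convex combination of such limits. Replace each \(f_k\) by a disjoint sum of pieces \(s_{k,i}\chi_{A_{k,i}}\) with \(|A_{k,i}|=a_i/M(s_{k,i})\). This realizes \(\phi\) up to summable errors. Closed convex hull limits are then handled by the same summable-approximation step, and symmetry of \((e_i)\) gives the equivalence with the unit vectors of \(\ell_\phi\).

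\emph{Necessity.} Let \((e_i)\) be equivalent to disjoint functions \(g_i\). By symmetry we may pass to subsequences and rearrangements freely. By \(\Delta_2\) each \(g_i\) can be approximated in norm by a disjoint finite step function \(h_i=\sum_j t_{ij}\chi_{B_{ij}}\), and a small perturbation preserves equivalence. After normalizing \(\|h_i\|_M=1\), so that \(\sum_j |B_{ij}|M(t_{ij})=1\), we get
\[
\rho_M\Bigl(\sum_i a_ih_i\Bigr)=\sum_i\psi_i(|a_i|),\qquad \psi_i(x)=\sum_j |B_{ij}|M(t_{ij})\,\frac{M(t_{ij}x)}{M(t_{ij})}.
\]
Each \(\psi_i\) lies in \(\operatorname{co}E_M(0,\infty)\). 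This is the main obstacle: \(\Delta_2\) gives uniform bounds on \([0,1]\), so \(\{\psi_i\}\) is equicontinuous, and by compactness a subsequence converges uniformly to some \(\phi\in C_M(0,\infty)\). Symmetry allows a further subsequence with \(\|\psi_i-\phi\|_\infty\) summable. One must then show that \(\sum_i\psi_i(|a_i|)\) and \(\sum_i\phi(|a_i|)\) define equivalent norms. I would first reduce to coefficients with \(|a_i|\leq1\), and then use \(\Delta_2\) together with a Lindenstrauss--Tzafriri-type argument for symmetric bases, which compares sums over large permuted blocks so that the summable errors are negligible. This yields \(X\cong\ell_\phi\).
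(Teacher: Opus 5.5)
The paper gives no proof of Theorem~\ref{th:LMinfty}; it recalls it from \cite[Theorem~1.3(i)]{Peiratz92} as background. Your outline is correct in substance, and it is the $L_M(0,\infty)$ version of the paper's proof of Theorem~\ref{prop:1}. Disjoint step functions replace normalized blocks, Arzel\`a--Ascoli gives the limit, summable uniform errors are absorbed as in Lemma~\ref{lm:summable-uniform-perturbations}, and the converse is built from successively chosen disjoint pieces.

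The comparison shows what the function-space setting buys. In $\ell_\Phi$ the weight $\phi_j(r_j)$ of a coordinate is fixed by its dilation parameter, and the weights must sum to one. This is why only the admissible family $\mathcal C_{\Phi,\tau}$ occurs, and it can be strictly smaller than the classical family (Remark~\ref{rem:strict-inclusion}). In $L_M(0,\infty)$ the measure $|A|$ of a piece $s\chi_A$ is a free parameter. So every convex combination of dilations $M(s\,\cdot)/M(s)$, $0<s<\infty$, is realized, and all of $C_M(0,\infty)$ appears. Also, the global $\Delta_2$ condition for the single function $M$ gives the Lipschitz bound $sM'_+(s)/M(s)\leq K-1$ for every dilation on $[0,1]$. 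Hence neither the scaling by $\tau$ nor Lemma~\ref{lm:normalizing-constants} is needed.

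Three points need correction.

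\emph{The role of $\Delta_2$ in the perturbation step.} $\Delta_2$ plays no role in passing from summable uniform errors to equivalent norms, and no Lindenstrauss--Tzafriri block argument is needed at the end of the necessity part. Since $\psi_i(1)=\phi(1)=1$, both Luxemburg norms dominate the supremum norm, so at the relevant scale all coefficients lie in $[0,1]$. The two modulars then differ by at most $E=\sum_i\|\psi_i-\phi\|_\infty$, and convexity gives equivalence with constant $1+E$. Your own reduction to $|a_i|\leq1$ already finishes this step. $\Delta_2$ is needed elsewhere: for density of step functions with finite-measure support, for equicontinuity, and for the estimate $\phi(2x)\leq K\phi(x)$ of the limit. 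That estimate gives positivity of $\phi$ and $h_\phi=\ell_\phi$.

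\emph{The dilation parameters.} Their convergence is irrelevant. Every $s\in(0,\infty)$ is allowed because $(0,\infty)$ has infinite measure, so it contains infinitely many disjoint sets of measure $1/M(s_k)$ even for bounded $s_k$. In $(0,1)$ one would need $\sum_k 1/M(s_k)\leq1$, which forces $s_k\to\infty$.

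\emph{The sufficiency hypothesis.} Symmetry of $(e_i)$ does not turn $X\cong\ell_\phi$ into equivalence of $(e_i)$ with the unit vector basis of $\ell_\phi$. Either read the hypothesis in that basis sense, as the paper does in Theorem~\ref{prop:1}\ref{th:1.1}, or add one step. An isomorphism carries $(e_i)$ to a symmetric basic sequence in $\ell_\phi$. Apply Theorem~\ref{prop:1}\ref{th:1.1} and Lemma~\ref{lm:admissible-inclusion} to the constant generators $\{\phi\}$. The sequence is then equivalent to the unit vector basis of $\ell_\theta$, with $\theta$ in the closed convex hull of $\{\phi(t\,\cdot)/\phi(t):0<t\leq1\}$. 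This hull lies in $C_M(0,\infty)$: a dilation of a convex combination of dilations of $M$ is again such a combination, and uniform limits are preserved because $\phi(t)>0$. Then apply your construction to $\theta$.
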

	
	\subsection{A known embedding result for Nakano spaces}\label{sec:known-nakano}
	
	We recall a special case of the embedding theorem of Peirats and Ruiz for Nakano sequence spaces. Let \(1\leq p<q<\infty\), and define
	\[
	p_n
	=
	\begin{cases}
		p, & n \text{ odd},\\
		q, & n \text{ even}.
	\end{cases}
	\]
	
	The corresponding Nakano sequence space is
	\[
	\ell^{(p_n)} = \left\{ x=(x_n): \sum_{n=1}^{\infty} |\lambda x_n|^{p_n}<\infty \text{ for some }\lambda>0 \right\},
	\]
	equipped with the Luxemburg norm generated by the modular \({\rho_{(p_n)}(x) = \sum_{n=1}^{\infty}|x_n|^{p_n}}.\) It is the Musielak--Orlicz sequence space associated with \(\Phi=\{\phi_n\},\phi_n(t)=t^{p_n}.\) Each \(\phi_n\) is normalized, since \(\phi_n(1)=1\). Moreover, \(\phi_n(2t) = 2^{p_n}\phi_n(t) \leq 2^q\phi_n(t),\) so \(\Phi\) satisfies the \(\delta_2\) condition with \(K=2^q\) and \(c_n=0\) for every \(n\).
	
	\begin{proposition}[{\cite[Theorem~2.9]{Peiratz92}}]\label{prop:nakano-two-powers}
		Let \(1\leq p<q<\infty\), and let \(\ell^{(p_n)}\) be the Nakano sequence space defined above. Then, for \(1\leq s<\infty\), \(\ell^s\subiso\ell^{(p_n)}\Longleftrightarrow s\in\{p,q\}.\)
	\end{proposition}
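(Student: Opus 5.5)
The direction $s\in\{p,q\}\Rightarrow\ell^s\subiso\ell^{(p_n)}$ is immediate. The odd coordinates span an isometric copy of $\ell^p$: on vectors supported on odd indices the modular is $\sum|x_n|^p$, so the Luxemburg norm is the $\ell^p$ norm. Likewise the even coordinates span an isometric copy of $\ell^q$.

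For the converse, suppose $\ell^s\subiso\ell^{(p_n)}$. The unit vector basis of $\ell^s$ is symmetric, so Proposition~\ref{prop:peiratz1} lets us replace it by an equivalent normalized block basic sequence $u_j=\sum_{n\in I_j}a_ne_n$ of the canonical basis $(e_n)$. Write each block as $u_j=u_j^{o}+u_j^{e}$, where $u_j^{o}$ is supported on the odd coordinates and $u_j^{e}$ on the even ones. Since $\rho(u_j)=1$, we have
\[
\rho(u_j^{o})+\rho(u_j^{e})=1,\qquad \alpha_j=\sum_{n\in I_j,\ n \text{ odd}}|a_n|^p,\qquad \beta_j=1-\alpha_j=\sum_{n\in I_j,\ n \text{ even}}|a_n|^q .
\]
Pass to a subsequence along which $\alpha_j\to\alpha\in[0,1]$. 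For scalars $(t_j)$ with $|t_j|\le1$, the modular is
\[
\rho\Bigl(\sum_j t_ju_j\Bigr)=\sum_j\bigl(|t_j|^p\alpha_j+|t_j|^q\beta_j\bigr).
\]
Hence the closed span of a subsequence of $(u_j)$ is, up to equivalence, the Musielak--Orlicz space with generators $\theta_j(t)=\alpha_jt^p+\beta_jt^q$. Here I use that modular comparability near zero, together with $\delta_2$, controls the norms.

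Now split into cases. If $\alpha>0$, then for large $j$ we have $\alpha_jt^p\le\theta_j(t)\le(\alpha_j+\beta_j)t^p=t^p$ on $[0,1]$, using $q>p$. Thus $\theta_j(t)\asymp t^p$ uniformly, the sequence is equivalent to the $\ell^p$ basis, and so $s=p$. If $\alpha=0$, choose a further subsequence with $\sum_j\alpha_j<\infty$. Then
\[
\sum_j\alpha_j|t_j|^p\le\Bigl(\sup_j|t_j|\Bigr)^p\sum_j\alpha_j ,
\]
while $\beta_j\to1$. The $q$-part then dominates on $\ell^q$, and the $p$-part is a bounded perturbation, provided the $\ell^q$ norm dominates the supremum. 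This gives equivalence with the $\ell^q$ basis, hence $s=q$, because the $\ell^r$ bases for distinct $r$ are not equivalent. In the language of \eqref{eq:classical-limiting-family}, this says that every member of the limiting family is of the form $\alpha t^p+(1-\alpha)t^q$, and such a function is equivalent at zero to $t^p$ if $\alpha>0$ and to $t^q$ if $\alpha=0$.

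The main obstacle is the case $\alpha=0$ without any summability assumption. The terms $\alpha_j|t_j|^p$ can then add up to a non-negligible contribution, which would make the space an intermediate Musielak--Orlicz space with generators $\theta_j$. Passing to a sparse subsequence with $\sum_j\alpha_j<\infty$ is what removes this issue. I also need to check carefully that a subsequence of a basis equivalent to the $\ell^s$ basis is still equivalent to it. That holds by symmetry, so the passage to subsequences is legitimate.
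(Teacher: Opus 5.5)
Your proof is correct and follows the same route the paper indicates; where it differs is only in how much machinery it needs. The paper does not prove this proposition itself, since it is quoted from Peirats--Ruiz. The route it indicates is Theorem~\ref{prop:1}\ref{th:1.2} together with Example~\ref{ex:nakano-limiting-family}, which computes $\mathcal C_{\Phi,\tau}=\{\alpha t^p+(1-\alpha)t^q:0\le\alpha\le1\}$, plus the observation that such a function is equivalent at zero to $t^p$ if $\alpha>0$ and to $t^q$ if $\alpha=0$.

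Your argument is a direct, specialized version of that block-reduction mechanism. Because the generators are pure powers, the identity $\rho\bigl(\sum_j t_ju_j\bigr)=\sum_j(\alpha_j|t_j|^p+\beta_j|t_j|^q)$ holds exactly for all scalars, not only for $|t_j|\le1$. So $[u_j]$ is isometric to $\ell_\Theta$ with $\theta_j(t)=\alpha_jt^p+\beta_jt^q$. Your appeal to ``comparability near zero together with $\delta_2$'' is therefore unnecessary, though harmless.

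This exactness lets you bypass the $\tau$-dilation, the lower bound $A\ge a$ of Lemma~\ref{lm:normalizing-constants}, and the Arzel\`a--Ascoli argument. Compactness reduces to $\alpha_j\in[0,1]$. The case $\alpha>0$ needs only the uniform sandwich $(\alpha/2)t^p\le\theta_j(t)\le t^p$ on $[0,1]$. The case $\alpha=0$ is exactly Lemma~\ref{lm:summable-uniform-perturbations}, since $0\le\theta_j(t)-t^q=\alpha_j(t^p-t^q)\le\alpha_j$ on $[0,1]$ and you arranged $\sum_j\alpha_j<\infty$.

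Your argument uses nothing about $\ell^s$ beyond symmetry of its basis. It therefore shows that every symmetric basic sequence in $\ell^{(p_n)}$ is equivalent to the unit vector basis of $\ell^p$ or of $\ell^q$, which is what the paper obtains from Theorem~\ref{prop:1}\ref{th:1.1} and the computed family.

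The paper's route buys generality: it works for arbitrary $\delta_2$ generators with summable errors, where no exact modular identity for blocks is available. Yours gives a short, self-contained proof for this special case.
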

	
	\subsection{Admissible normalized blocks}\label{sec:admissible-blocks}
	
	Since every Musielak--Orlicz sequence space can be represented, after a diagonal change of coordinates, by a normalized Musielak--Orlicz function, for the construction below we assume that \(\Phi=\{\phi_n\}\) is normalized, that is, \(\phi_n(1)=1\) for every \(n\in\mathbb N\). We also assume that \(\Phi\) satisfies the \(\delta_2\) condition. Fix constants \(K\geq2\), \(\delta>0\), and a nonnegative sequence \((c_n)\in\ell^1\) such that \(\phi_n(2x)\leq K\phi_n(x)+c_n\) whenever \(\phi_n(x)\leq\delta\). The parameters and limiting family introduced below are understood with respect to this fixed choice of the \(\delta_2\)-data.
	
	Fix $m\in\mathbb N$ so large that
	\begin{equation}\label{eq:choice-tau}
		\tau=2^{-m}\leq\min\{\delta,1\},
	\end{equation}
	and put $D_0=0$ and $D_k=1+K+\cdots+K^{k-1}$ for $1\leq k\leq m$. Since $(c_n)\in \ell^1$, there is $n_0\in\mathbb N$ such that
	\begin{equation}\label{eq:choice-n0}
		D_m\sum_{j=n_0}^{\infty}c_j
		\leq\frac12\min\{\delta,1\}.
	\end{equation}
	
	We need the following modification of the family used in \cite{Peiratz92}.
	\begin{definition}
		For $n\geq n_0$, let $\mathcal B_{\Phi,\tau,n}$ be the set of all functions
		\begin{equation}\label{eq:admissible-family}
			g(x)=\frac{1}{A}\sum_{j\in E}\phi_j(\tau r_jx),
			\qquad
			A=\sum_{j\in E}\phi_j(\tau r_j),
		\end{equation}
		where $E$ is a nonempty finite subset of $\{n,n+1,\ldots\}$, $r_j>0$, and
		\begin{equation}\label{eq:admissibility}
			\sum_{j\in E}\phi_j(r_j)=1.
		\end{equation}
		
		Define
		\begin{equation}\label{eq:Cphi-tau}
			\mathcal C_{\Phi,\tau}
			=\bigcap_{n=n_0}^{\infty}
			\overline{\mathcal B_{\Phi,\tau,n}},
		\end{equation}
		where the closures are taken in the uniform norm of $C[0,1]$. The notation \(\mathcal C_{\Phi,\tau}\) distinguishes this family, defined through admissible normalized blocks, from the classical family \(C_{\Phi,a}\) in \eqref{eq:classical-limiting-family}.
	\end{definition}
	
	\section{Main Results}\label{sec:main-results}
	
	The main result represents a Banach space with a symmetric basis embedded in \(\ell_\Phi\) by a function in the admissible limiting family \(\mathcal C_{\Phi,\tau}\). It also characterizes the Orlicz sequence spaces that embed into \(\ell_\Phi\): equivalence at zero to a member of this family is both necessary and sufficient. The parameters are fixed as in Section~\ref{sec:admissible-blocks}. When an element of this family is used as an Orlicz function, it is understood to be extended to \([0,\infty)\) as described after Lemma~\ref{lm:compactness}.
	
	\begin{theorem}\label{prop:1}
		Let \(\ell_\Phi\) be a Musielak--Orlicz sequence space generated by a normalized Musielak--Orlicz function \(\Phi=\{\phi_n\}\) satisfying the \(\delta_2\) condition.
		\begin{enumerate}[label=(\alph*)]
			\item\label{th:1.1}
			If \(X\) is a Banach space with a symmetric basis and is isomorphic to a subspace of \(\ell_\Phi\), then there exists \(\psi\in \mathcal C_{\Phi,\tau}\) such that \(X\) is isomorphic to \(\ell_\psi\). More precisely, the given symmetric basis of \(X\) is equivalent to the canonical basis of \(\ell_\psi\).
			\item\label{th:1.2}
			For every Orlicz function \(\psi\), the following equivalence holds:
\[
\ell_\psi\subiso\ell_\Phi
\quad\Longleftrightarrow\quad
\text{there exists }\varphi\in\mathcal C_{\Phi,\tau}
\text{ such that }\psi\sim\varphi.
\]
In particular, for \(1\leq p<\infty\), one has \(\ell^p\subiso\ell_\Phi\) if and only if there exists \(\varphi\in\mathcal C_{\Phi,\tau}\) equivalent at zero to \(t\mapsto t^p\).
\par
		\end{enumerate}
	\end{theorem}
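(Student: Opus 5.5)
We prove \ref{th:1.1} by a block-basis reduction followed by a compactness argument, in the spirit of \cite{Peiratz92}, and then obtain \ref{th:1.2} from \ref{th:1.1} together with a direct block construction.

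\emph{Reduction to blocks.} Let \((d_i)\) be the image in \(\ell_\Phi\) of the given symmetric basis of \(X\), normalized. By Proposition~\ref{prop:peiratz1}, \((d_i)\) is equivalent to a block basic sequence \(u_i=\sum_{j\in E_i}a_je_j\) of the unit vectors. We may take \(\|u_i\|_\Phi=1\). Because \(\Phi\in\delta_2\) and \(\ell_\Phi=h_\Phi\), norm one forces \(\rho_\Phi(u_i)=1\). Hence \(\sum_{j\in E_i}\phi_j(r_j)=1\) with \(r_j=|a_j|\), which is exactly the admissibility condition \eqref{eq:admissibility}. Symmetry lets us pass to a subsequence with \(\min E_i\to\infty\), so eventually \(E_i\subset\{n,n+1,\dots\}\) for any prescribed \(n\geq n_0\).

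\emph{Normalized block functions.} Put \(g_i(x)=A_i^{-1}\sum_{j\in E_i}\phi_j(\tau r_jx)\), so that \(g_i\in\mathcal B_{\Phi,\tau,n}\) for large \(i\). The key step is Lemma~\ref{lm:compactness}, which I expect to be the main obstacle. It has two parts.

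First, \(A_i\) must be bounded above and below uniformly. Iterating \eqref{eq:delta2} \(m\) times from \(\tau r_j\) up to \(r_j\) gives
\[
1=\sum_{j\in E_i}\phi_j(r_j)\leq K^m A_i+D_m\sum_{j\geq n_0}c_j .
\]
By \eqref{eq:choice-n0} this yields \(A_i\geq 1/(2K^m)\), while convexity gives \(A_i\leq\tau\). Note that each \(\phi_j(2^k\tau r_j)\leq 1\leq\ldots\) must stay under the threshold \(\delta\); this is handled by the choice \eqref{eq:choice-tau}, with care needed since \(\phi_j(r_j)\leq1\) need not be \(\leq\delta\).

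Second, the family \(\bigcup_n\mathcal B_{\Phi,\tau,n}\) must be equicontinuous and bounded on \([0,1]\). Each \(g\) is convex with \(g(0)=0\) and \(g(1)=1\), so its right derivative on \([0,1)\) is controlled by its values on \([0,2]\). That control again comes from one extra \(\delta_2\) step, with the \(c_j\)-errors absorbed by \(A_i\geq 1/(2K^m)\). Arzelà--Ascoli then gives a uniformly convergent subsequence \(g_i\to\psi\). Its limit lies in \(\overline{\mathcal B_{\Phi,\tau,n}}\) for every \(n\), hence in \(\mathcal C_{\Phi,\tau}\). The limit is extended to \([0,\infty)\) as convex, for example linearly beyond \(1\).

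\emph{Equivalence with \(\ell_\psi\).} For scalars \((b_i)\) with \(|b_i|\leq1\) we compute
\[
\rho_\Phi\Bigl(\sum_i b_i\tau u_i\Bigr)=\sum_iA_i\,g_i(|b_i|).
\]
Since \(A_i\) is comparable to a constant, \(\sum b_iu_i\) converges iff \(\sum_i g_i(|b_i|)<\infty\) for small dilations. Passing to a further subsequence with \(\|g_i-\psi\|_\infty\leq 2^{-i}\), the difference between \(\sum_ig_i(|b_i|)\) and \(\sum_i\psi(|b_i|)\) is summable. Combining this with the \(\delta_2\) and \(\Delta_2\)-type control of \(\psi\) at zero inherited from the \(g_i\) gives equivalence of \((u_i)\) with the unit basis of \(\ell_\psi\). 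By symmetry, this subsequence equivalence transfers to the whole basis of \(X\).

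\emph{Part \ref{th:1.2}.} For "\(\Rightarrow\)", apply \ref{th:1.1} to \(X=\ell_\psi\) with its symmetric unit basis, which gives \(\varphi\in\mathcal C_{\Phi,\tau}\) with the two unit bases equivalent, i.e.\ \(\psi\sim\varphi\). For "\(\Leftarrow\)", take \(\varphi\in\mathcal C_{\Phi,\tau}\). Choose inductively \(g_i\in\mathcal B_{\Phi,\tau,n_i}\) with \(\|g_i-\varphi\|_\infty\leq2^{-i}\) and disjoint supports \(E_i\) lying successively further out. The corresponding admissible blocks \(u_i\) then satisfy the same modular computation as above, so they are equivalent to the unit basis of \(\ell_\varphi\), and hence \(\ell_\psi\cong\ell_\varphi\subiso\ell_\Phi\). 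The \(\ell^p\) statement is the special case \(\psi(t)=t^p\).
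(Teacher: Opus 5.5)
Your overall route is the paper's: reduction to a normalized block sequence by Proposition~\ref{prop:peiratz1}, the admissible functions \(g_i\in\mathcal B_{\Phi,\tau,n_i}\), uniform bounds on \(A_i\), Arzel\`a--Ascoli, a subsequence with \(\|g_i-\psi\|_\infty\leq2^{-i}\), comparison of modulars, and symmetry. For \ref{th:1.2}, necessity comes from part \ref{th:1.1} and sufficiency from successive admissible blocks. The step that fails as written is the lower bound on \(A_i\). Iterating \eqref{eq:delta2} from \(\tau r_j\) up to \(2^m\tau r_j=r_j\) requires \(\phi_j(2^k\tau r_j)\leq\delta\) for every \(0\leq k<m\). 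The choice \eqref{eq:choice-tau} secures this only for \(k=0\), via \(\phi_j(\tau r_j)\leq\tau\phi_j(r_j)\leq\tau\leq\delta\). When \(\delta<1\), the chain may cross the threshold before it reaches \(r_j\), and then \(1\leq K^mA_i+D_m\sum_jc_j\) is not available. In fact the bound \(A_i\geq1/(2K^m)\) is false in general. Let every \(\phi_n\) equal \(\phi(t)=t^2/4\) on \([0,\frac12]\) and \(\phi(t)=\frac1{16}+\frac{15}{8}(t-\frac12)\) for \(t\geq\frac12\). This is a normalized Orlicz function, and \eqref{eq:delta2} holds with \(K=4\), \(\delta=1/64\), \(c_n=0\). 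Hence \(m\geq6\), and the admissible singleton \(E=\{n\}\), \(r_n=1\), gives \(A=\phi(2^{-m})=2^{-2m-2}<2^{-2m-1}=1/(2K^m)\).

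The missing idea is to stop at the first crossing. Suppose \(\phi_j(2^k\tau r_j)>\delta\) for some \(j\in E\) and some \(k<m\), and take the least such \(k\). The iteration is legitimate up to \(k\), so
\[
\delta<K^k\phi_j(\tau r_j)+D_kc_j\leq K^mA+D_m\sum_{q\geq n_0}c_q,
\]
and \eqref{eq:choice-n0} then gives \(A\geq\delta/(2K^m)\). If no crossing occurs, your computation is valid. Together these yield \(A\geq a=\min\{\delta,1\}/(2K^m)\), which is Lemma~\ref{lm:normalizing-constants}. Everything downstream in your proposal uses only \(A_i\geq a>0\). That includes the Lipschitz bound \(K-1+a^{-1}\sum_{j\geq n_0}c_j\), the limiting estimate \(\psi(2x)\leq K\psi(x)\) (which also gives \(\psi>0\) on \((0,1]\)), and the comparability of \(\sum_iA_ig_i\) with \(\sum_ig_i\). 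So with this constant the argument goes through. One smaller correction concerns the necessity part of \ref{th:1.2}: the unit vectors form a basis of \(h_\psi\), not necessarily of \(\ell_\psi\). You should therefore apply \ref{th:1.1} to the image of \(h_\psi\), as the paper does.
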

	
	\section{Auxiliary Results}\label{sec:auxiliary-results}
	
	We establish the bounds for the normalizing constants, compactness of the admissible limiting family, and the norm comparison needed in the proof of Theorem~\ref{prop:1}. Throughout this section, the parameters are those fixed in Section~\ref{sec:admissible-blocks}.
	
	\begin{lemma}\label{lm:normalizing-constants}
		If $g\in\mathcal B_{\Phi,\tau,n}$ and $A$ is given by~\eqref{eq:admissible-family}, then
		\begin{equation}\label{eq:A-bounds}
			a\leq A\leq\tau,
			\qquad
			a=\frac{\min\{\delta,1\}}{2K^m}>0.
		\end{equation}
	\end{lemma}
	
	\begin{proof}
		The upper estimate follows from convexity and~\eqref{eq:admissibility}:
		\[
		A=\sum_{j\in E}\phi_j(\tau r_j) \leq\tau\sum_{j\in E}\phi_j(r_j)=\tau.
		\]
		
		For the lower estimate, first suppose that for some $j\in E$ and some $0\leq k<m$ one has $\phi_j(2^k\tau r_j)>\delta$, and take the smallest such $k$. Iterating the $\delta_2$ inequality up to this first crossing gives
		\[
		\delta<K^k\phi_j(\tau r_j)+D_kc_j \leq K^m A+D_m\sum_{q=n}^{\infty}c_q.
		\]
		
		By~\eqref{eq:choice-n0}, we get $A\geq\delta/(2K^m)\geq a$.
		
		If there is no such crossing, the $\delta_2$ inequality may be iterated $m$ times for every $j\in E$. Since $2^m\tau=1$, summing the resulting estimates and using~\eqref{eq:admissibility} gives
		\[
		1\leq K^mA+D_m\sum_{j\in E}c_j \leq K^mA+D_m\sum_{q=n}^{\infty}c_q.
		\]
		
		Thus $A\geq1/(2K^m)\geq a$ in this case as well.
	\end{proof}
	
	\begin{lemma}\label{lm:compactness}
		Let \(\Phi=\{\phi_n\}\) be a normalized Musielak--Orlicz function satisfying the \(\delta_2\) condition, with the parameters chosen above. Then \(\mathcal C_{\Phi,\tau}\) is a nonempty compact subset of \(C[0,1]\). Every element of \(\mathcal C_{\Phi,\tau}\) is the restriction to \([0,1]\) of a normalized Orlicz function satisfying the \(\Delta_2\) condition at zero.
	\end{lemma}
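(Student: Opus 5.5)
The approach is Arzelà--Ascoli applied to the sets \(\overline{\mathcal B_{\Phi,\tau,n}}\). These sets are nested decreasing in \(n\), since a block supported in \(\{n+1,\dots\}\) is also supported in \(\{n,\dots\}\). So once each \(\overline{\mathcal B_{\Phi,\tau,n_0}}\) is shown to be compact and each \(\mathcal B_{\Phi,\tau,n}\) nonempty, \(\mathcal C_{\Phi,\tau}\) is a decreasing intersection of nonempty compact sets and is therefore nonempty and compact.

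Nonemptiness is immediate. For \(j\geq n\), take \(E=\{j\}\) and \(r_j=1\); then \(\phi_j(1)=1\) satisfies \eqref{eq:admissibility}, and \(g(x)=\phi_j(\tau x)/\phi_j(\tau)\) belongs to \(\mathcal B_{\Phi,\tau,n}\).

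For compactness I verify uniform boundedness and equicontinuity on \([0,1]\). Every \(g\in\mathcal B_{\Phi,\tau,n}\) is convex, nondecreasing, satisfies \(g(0)=0\) and \(g(1)=1\), so \(0\leq g\leq1\). Equicontinuity follows from a uniform bound on the slope at \(x=1\). Here I use that \(\tau\leq 1/2\) when \(m\geq1\), so \(2\tau r_j\leq r_j\). By convexity,
\[
g(2x)\leq \frac1A\sum_{j\in E}\phi_j(\tau r_j\cdot 2x)\leq \frac1A\sum_{j\in E}\phi_j(r_j)=\frac1A\leq\frac1a \qquad (x\leq 1/2\cdot 2^{m-1}\ \text{in fact}).
\]
More directly, \(g\) is defined by the same formula on \([0,2]\), and \(g(2)\leq \frac1A\sum_{j\in E}\phi_j(2\tau r_j)\leq 1/A\leq 1/a\) by Lemma~\ref{lm:normalizing-constants}. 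Convexity then bounds the right derivative on \([0,1]\) by \(g(2)-g(1)\leq 1/a\), so all \(g\) are \(1/a\)-Lipschitz on \([0,1]\). Arzelà--Ascoli gives compactness of the closure.

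It remains to identify the limits. A uniform limit \(\psi\) of such \(g\) on \([0,1]\) is convex, nondecreasing, and satisfies \(\psi(0)=0\) and \(\psi(1)=1\). Positivity on \((0,1]\) and the \(\Delta_2\) condition at zero will both come from a uniform doubling estimate. For \(x\leq 1/2\) and \(j\in E\), we have \(\phi_j(\tau r_jx)\leq \tau\phi_j(r_j)\leq\delta\), so \(\delta_2\) applies and gives
\[
g(2x)\leq K g(x)+\frac1A\sum_{j\geq n}c_j\leq Kg(x)+\frac{\varepsilon_n}{a},\qquad \varepsilon_n=\sum_{j\ge n}c_j\to0 .
\]
Passing first to the limit over \(g\in\mathcal B_{\Phi,\tau,n}\) and then letting \(n\to\infty\) yields \(\psi(2x)\leq K\psi(x)\) for \(0\leq x\leq1/2\). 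Iterating from \(\psi(1)=1\) gives \(\psi(2^{-k})\geq K^{-k}>0\), so by monotonicity \(\psi>0\) on \((0,1]\), and \(\psi\) satisfies the \(\Delta_2\) condition at zero.

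Finally, \(\psi\) is extended to \([0,\infty)\) linearly with slope equal to its left derivative at \(1\), which is finite by the Lipschitz bound. This extension is convex and continuous, and since the \(\Delta_2\) condition is only needed at zero, the result is a normalized Orlicz function.

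The main obstacle is the slope bound needed for equicontinuity. It relies on evaluating \(g\) at the point \(2\), which is legitimate because \(\tau\le1/2\). If \(m=0\) were allowed, i.e. \(\tau=1\), I would instead need equicontinuity near \(x=1\) from the \(\delta_2\) doubling together with the lower bound on \(A\). In that case I would bound the slope at \(1\) via \(g(1)-g(1-h)\) using convexity and the doubling estimate at smaller arguments. Choosing \(m\geq1\) in \eqref{eq:choice-tau} avoids this difficulty.
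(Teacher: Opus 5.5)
Your proof is correct and follows the paper's overall structure:
\begin{itemize}
\item singleton blocks $E=\{j\}$, $r_j=1$ give nonemptiness;
\item Arzel\`a--Ascoli is applied to the nested closures $\overline{\mathcal B_{\Phi,\tau,n}}$;
\item the estimate $g(2x)\le Kg(x)+a^{-1}\sum_{j\ge n}c_j$ for $x\le 1/2$ is passed to uniform limits, and then $n\to\infty$;
\item iterating from $\psi(1)=1$ gives positivity.
\end{itemize}

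The only genuine difference is the equicontinuity step. The paper applies $\delta_2$ at $t_j=\tau r_j$, which is legitimate because $\phi_j(t_j)\le A\le\tau\le\delta$. This gives $t_j\phi'_{j,+}(t_j)\le(K-1)\phi_j(t_j)+c_j$ and the Lipschitz constant $K-1+a^{-1}\sum_{j\ge n_0}c_j$. By \eqref{eq:choice-n0} and $D_m\ge K^{m-1}$, this constant is at most $2K-1$. You instead evaluate the block formula at $x=2$ and use only admissibility and $2\tau\le 1$. That gives $g(2)\le 1/A\le 1/a$, hence the Lipschitz constant $1/a=2K^m/\min\{\delta,1\}$.

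Your route is more elementary: it needs no derivatives, and $\delta_2$ enters only through Lemma~\ref{lm:normalizing-constants}. The paper's route uses no values beyond $x=1$. It therefore works for any $\tau\le\delta$, even $\tau=1$, and its constant does not depend on $m$. Since $m\in\mathbb N$ here, $\tau\le 1/2$ holds, so your caveat about $m=0$ is moot.

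Your first display is muddled and can simply be dropped in favour of the ``more directly'' argument. Your linear extension with slope $\psi'_-(1)$ is finite by the Lipschitz bound and convex. It serves as well as the paper's extension $1+L(x-1)+(x-1)^2$.
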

	
	\begin{proof}
		Let \(g\in\mathcal B_{\Phi,\tau,n}\) be represented as in~\eqref{eq:admissible-family}, and put \(t_j=\tau r_j\) for \(j\in E\). Denote by \(\phi'_{j,+}\) the right derivative of the convex function \(\phi_j\). Since \(\phi_j(t_j)\leq A\leq\tau\leq\delta\), convexity and the \(\delta_2\) condition give \(t_j\phi'_{j,+}(t_j) \leq\phi_j(2t_j)-\phi_j(t_j) \leq(K-1)\phi_j(t_j)+c_j.\) Moreover, convexity implies
		\[
		|\phi_j(t_jx)-\phi_j(t_jy)| \leq t_j\phi'_{j,+}(t_j)|x-y|, \qquad x,y\in[0,1].
		\]
		
		Consequently,
		\[
		|g(x)-g(y)| \leq\frac{1}{A}\sum_{j\in E}t_j\phi'_{j,+}(t_j)|x-y| \leq\left(K-1+\frac{1}{a}\sum_{j=n_0}^{\infty}c_j\right)|x-y|.
		\]
		
		Also, $0\leq g(x)\leq1$ and $g(1)=1$. Thus the families $\mathcal B_{\Phi,\tau,n}$ are uniformly bounded and equicontinuous. Their closures are compact by the Arzel\`a--Ascoli theorem.
		Each \(\mathcal B_{\Phi,\tau,n}\) is nonempty: taking \(E=\{n\}\) and \(r_n=1\) gives the admissible function \(t\mapsto\phi_n(\tau t)/\phi_n(\tau)\). Their closures form a nested sequence of nonempty compact sets, so \(\mathcal C_{\Phi,\tau}\) is nonempty and compact.
		
		Every $f\in \mathcal C_{\Phi,\tau}$ is convex, nonnegative, and satisfies $f(0)=0$ and $f(1)=1$. For $g\in\mathcal B_{\Phi,\tau,n}$ and $0\leq x\leq1/2$, the $\delta_2$ condition gives
		\[
		g(2x)\leq Kg(x)+\frac{1}{A}\sum_{j\in E}c_j \leq Kg(x)+\frac{1}{a}\sum_{j=n}^{\infty}c_j.
		\]
		
		Fix \(n\geq n_0\). Since \(f\in\overline{\mathcal B_{\Phi,\tau,n}}\), choose a sequence \(\{g_k^{(n)}\}_{k=1}^{\infty}\) in \(\mathcal B_{\Phi,\tau,n}\) such that \(g_k^{(n)}\to f\) uniformly on \([0,1]\) as \(k\to\infty\). Applying the preceding estimate to each \(g_k^{(n)}\) and passing to this uniform limit gives \(f(2x)\leq Kf(x)+a^{-1}\sum_{j=n}^{\infty}c_j\) for \(0\leq x\leq1/2\). This inequality holds for every \(n\geq n_0\). Since \((c_j)\in\ell^1\), letting \(n\to\infty\) now yields
		\begin{equation}\label{eq:limit-doubling}
			f(2x)\leq Kf(x),\qquad 0\leq x\leq\frac12.
		\end{equation}
		
		If $0<x\leq 1$, choose $q$ such that $2^{-q}\leq x$. From~\eqref{eq:limit-doubling} and monotonicity it follows that \(1=f(1)\leq K^qf(2^{-q})\leq K^qf(x).\) Thus \(f(x)>0\) for every \(x\in(0,1]\). Let \(L\geq1\) be a Lipschitz constant of \(f\) on \([0,1]\), and extend \(f\) to \([0,\infty)\) by setting \(f(x)=1+L(x-1)+(x-1)^2\) for \(x>1\). This extension is continuous and convex, and hence is a normalized Orlicz function. Inequality~\eqref{eq:limit-doubling} shows that it satisfies the \(\Delta_2\) condition at zero.
	\end{proof}
	
	Whenever an element of \(\mathcal C_{\Phi,\tau}\) is used to generate an Orlicz sequence space, we identify it with an extension of the preceding form. The choice of extension does not affect the Luxemburg norm, since the function is normalized and its values on \([0,1]\) determine the modular unit ball.
	
	In \cite[Examples~1 and~2]{FuentesHernandez88}, summable uniform approximations of normalized generating functions by \(t\) and \(t^2\) are used to obtain weighted Orlicz sequence spaces isomorphic to \(\ell^1\) and \(\ell^2\), respectively. The following lemma formulates this norm comparison for approximation by an arbitrary normalized Orlicz function.
	
	\begin{lemma}\label{lm:summable-uniform-perturbations}
		Let \(\psi\) be a normalized Orlicz function, and let \(\Psi=\{\psi_n\}\) be a normalized Musielak--Orlicz function. Set \(\varepsilon_n=\|\psi_n-\psi\|_\infty\), where the uniform norm is taken over \([0,1]\), and assume that \(\sum_{n=1}^{\infty}\varepsilon_n<\infty\). Then the canonical unit-vector sequences of \(\ell_\psi\) and \(\ell_\Psi\) are equivalent. Consequently, the identity on finitely supported sequences extends to an isomorphism between \(h_\psi\) and \(h_\Psi\).
	\end{lemma}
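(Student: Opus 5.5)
We need to show: there exist constants $0<c\leq C$ such that for every finitely supported scalar sequence $x$,
\[
c\|x\|_\psi\leq\|x\|_\Psi\leq C\|x\|_\psi .
\]
Equivalence of the unit-vector sequences then follows, and so does the extension of the identity to an isomorphism between the closed spans of the unit vectors. These closed spans are \(h_\psi\) and \(h_\Psi\), since finitely supported sequences are dense in the hearts. By homogeneity it suffices to work with sequences on the unit spheres and compare modulars. Throughout, put \(S=\sum_n\varepsilon_n<\infty\).

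First step. Take \(x\) finitely supported with \(\|x\|_\psi\leq1\), so that \(\rho_\psi(x)=\sum_n\psi(|x_n|)\leq1\). Then \(|x_n|\leq1\) for every \(n\), because \(\psi(1)=1\) and \(\psi\) is strictly increasing. Hence
\[
\rho_\Psi(x)=\sum_n\psi_n(|x_n|)\leq\sum_n\psi(|x_n|)+\sum_n\varepsilon_n\leq 1+S .
\]
Convexity gives \(\rho_\Psi(x/\lambda)\leq\lambda^{-1}\rho_\Psi(x)\) for \(\lambda\geq1\). Taking \(\lambda=1+S\) yields \(\|x\|_\Psi\leq1+S\), and by homogeneity \(\|x\|_\Psi\leq(1+S)\|x\|_\psi\).

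Second step. This is the symmetric argument. Take \(\|x\|_\Psi\leq1\). Then \(\psi_n(|x_n|)\leq1=\psi_n(1)\), so \(|x_n|\leq1\), because each \(\psi_n\) is a normalized Orlicz function and is therefore strictly increasing. The same estimate gives \(\rho_\psi(x)\leq1+S\), hence \(\|x\|_\psi\leq(1+S)\|x\|_\Psi\).

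The only delicate point is that the uniform approximation is available only on \([0,1]\). It is therefore essential that normalization forces all coordinates of a vector in either unit ball into \([0,1]\), and that the error term \(\sum_n\varepsilon_n\) enters the modular additively rather than multiplicatively. Once both are checked, the convexity rescaling absorbs the additive error into a constant. No \(\Delta_2\)-type hypothesis is needed, because we only compare norms on finitely supported vectors and pass to closures. Since the canonical sequences are equivalent, the identity map extends continuously in both directions. Its range is the closed span of the unit vectors, namely \(h_\psi\) and \(h_\Psi\) respectively.
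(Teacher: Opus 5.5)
Your proof is correct and follows essentially the same route as the paper. In both, normalization confines the coordinates to $[0,1]$, the uniform errors add at most $\sum_n\varepsilon_n$ to the modular, convexity rescaling by $1+\sum_n\varepsilon_n$ gives the two-sided norm estimate, and density of finitely supported sequences in the hearts finishes the argument.

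The only cosmetic difference is that you work directly with $\|x\|_\psi\leq1$. This implicitly uses that the Luxemburg infimum yields $\rho_\psi(x)\leq1$, which is true but you do not state it. The paper instead takes $r>\|x\|_\psi$, which avoids that step.
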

	
	\begin{proof}
		Set \(E=\sum_{n=1}^{\infty}\varepsilon_n.\) Let \(x=(x_n)\) be a finitely supported sequence. Since all the functions are normalized, both Luxemburg norms dominate the supremum norm: \(\|x\|_\infty\leq\|x\|_\psi\) and \(\|x\|_\infty\leq\|x\|_\Psi\).
		
		Let \(r>\|x\|_\psi\). Then \(\sum_n\psi(|x_n|/r)\leq1\), and \(|x_n|/r\leq1\) for every \(n\). Therefore,
		\[
		\sum_{n=1}^{\infty} \psi_n\left(\frac{|x_n|}{r}\right) \leq \sum_{n=1}^{\infty} \psi\left(\frac{|x_n|}{r}\right) + \sum_{n=1}^{\infty}\varepsilon_n \leq 1+E.
		\]
		
		By convexity and the equality \(\psi_n(0)=0\), \(\psi_n\left( \frac{|x_n|}{(1+E)r} \right) \leq \frac{1}{1+E} \psi_n\left(\frac{|x_n|}{r}\right).\) Hence \(\sum_{n=1}^{\infty} \psi_n\left( \frac{|x_n|}{(1+E)r} \right) \leq1,\) and consequently \(\|x\|_\Psi \leq (1+E)\|x\|_\psi.\)
		
		Interchanging the two modulars gives \(\|x\|_\psi \leq (1+E)\|x\|_\Psi.\) Thus, for every finitely supported scalar sequence \(x\), \(\frac{1}{1+E}\|x\|_\psi \leq \|x\|_\Psi \leq (1+E)\|x\|_\psi.\) These inequalities establish the equivalence of the canonical unit-vector sequences. Since the finitely supported sequences are dense in \(h_\psi\) and \(h_\Psi\), the identity extends to an isomorphism between these subspaces.
	\end{proof}
	
	\section{Proof of the Main Result}\label{sec:proof-main-result}
	
	\begin{proof}[Proof of Theorem~\ref{prop:1}]
		\ref{th:1.1} Let \(J:X\longrightarrow\ell_\Phi\) be an isomorphic embedding, and let \((d_i)\) be a normalized symmetric basis of \(X\). Since \(\Phi\in\delta_2\), the canonical unit-vector sequence \((e_n)\) is a Schauder basis of \(\ell_\Phi\). The sequence \((Jd_i)\) is a symmetric basic sequence in \(\ell_\Phi\), equivalent to \((d_i)\).
		
		By Proposition~\ref{prop:peiratz1}, there exists a normalized block basic sequence $u_i=\sum_{j=n_i}^{n_{i+1}-1}s_je_j$, $ n_i<~n_{i+1},$ which is equivalent to \((Jd_i)\), and hence to \((d_i)\). Since \(u_i\) has finite support and \(\|u_i\|_\Phi=1\), continuity of the modular on this finite-dimensional coordinate subspace gives
		
		\begin{equation}\label{eq:block-normalization}
			\sum_{j=n_i}^{n_{i+1}-1}\phi_j(|s_j|)=1.
		\end{equation}
		
		Since \(n_i\to\infty\), after discarding finitely many initial blocks and relabeling the remaining sequence, we may assume that \(n_i\geq n_0\) for every \(i\). Since a symmetric basis is equivalent to each of its tails, this does not affect the equivalence class of the sequence. For every \(i\in\mathbb N\), set
		\[
		A_i=\sum_{j=n_i}^{n_{i+1}-1}\phi_j(\tau|s_j|), \qquad \psi_i(x)=\frac{1}{A_i} \sum_{j=n_i}^{n_{i+1}-1}\phi_j(\tau|s_j|x).
		\]
		
		Disjointness of the block supports gives, for every finitely supported scalar sequence \((x_i)\),
		\begin{equation}\label{eq:block-modular-identity}
			\rho_\Phi\left(
			\sum_i x_i\tau u_i
			\right)
			=
			\sum_i A_i\psi_i(|x_i|).
		\end{equation}
		
		For each \(i\), let \(E_i=\{j:n_i\leq j<n_{i+1},\ s_j\neq0\}\). Since \(\|u_i\|_\Phi=1\), the set \(E_i\) is nonempty. Setting \(r_j=|s_j|>0\) for \(j\in E_i\), identity~\eqref{eq:block-normalization} gives \(\sum_{j\in E_i}\phi_j(r_j)=1\). Omitting the zero terms in the definitions of \(A_i\) and \(\psi_i\), we therefore obtain \(\psi_i\in\mathcal B_{\Phi,\tau,n_i}\).
		By the proof of Lemma~\ref{lm:compactness}, the set \(\overline{\mathcal B_{\Phi,\tau,n_0}}\) is compact in \(C[0,1]\). Since \(n_i\geq n_0\), every \(\psi_i\) belongs to this set. Hence there is a subsequence \(\{\psi_{i_m}\}_{m=1}^{\infty}\) converging uniformly on \([0,1]\) to a function \(\psi\).
		
		To verify that \(\psi\in\mathcal C_{\Phi,\tau}\), fix \(n\geq n_0\). Since \(n_{i_m}\to\infty\), for all sufficiently large \(m\) we have \(n_{i_m}\geq n\), and therefore \(\psi_{i_m}\in\mathcal B_{\Phi,\tau,n_{i_m}}\subseteq\mathcal B_{\Phi,\tau,n}\). Passing to the uniform limit gives \(\psi\in\overline{\mathcal B_{\Phi,\tau,n}}\). As this holds for every \(n\geq n_0\), definition~\eqref{eq:Cphi-tau} yields \(\psi\in\mathcal C_{\Phi,\tau}\).
		
		Passing to a further subsequence and relabeling, we may assume that \(\|\psi_{i_m}-\psi\|_\infty\leq2^{-m}\) for every \(m\in\mathbb N\).
		
		Let \(\Psi_0=\{\psi_i\}\), and let \(\widetilde\Psi_0=\{A_i\psi_i\}\). By Lemma~\ref{lm:normalizing-constants}, \(a\leq A_i\leq\tau\leq1.\) Consequently, for every finitely supported scalar sequence \(x\), \(\rho_{\widetilde\Psi_0}(x) \leq \rho_{\Psi_0}(x).\) Conversely, by convexity,
		\[
		\rho_{\Psi_0}(ax) = \sum_i\psi_i(a|x_i|) \leq a\sum_i\psi_i(|x_i|) \leq \sum_i A_i\psi_i(|x_i|) = \rho_{\widetilde\Psi_0}(x).
		\]
		
		These modular inequalities imply \(\|x\|_{\widetilde\Psi_0} \leq \|x\|_{\Psi_0} \leq \frac1a\|x\|_{\widetilde\Psi_0}.\)
		
		Thus the canonical unit-vector sequences of \(\ell_{\Psi_0}\) and \(\ell_{\widetilde\Psi_0}\) are equivalent. Moreover, identity~\eqref{eq:block-modular-identity}, applied to \(x/r\) for every \(r>0\), gives \(\left\|\sum_i x_i\tau u_i\right\|_\Phi =\|x\|_{\widetilde\Psi_0}\) for every finitely supported scalar sequence \(x=(x_i)\). Hence the canonical unit-vector sequence of \(\ell_{\widetilde\Psi_0}\) is isometrically equivalent to \((\tau u_i)\), which is equivalent to \((u_i)\).
		
		For the subsequence chosen above, set \(\Psi_1=\{\psi_{i_m}\}_{m=1}^{\infty}\). Since \(\sum_m\|\psi_{i_m}-\psi\|_\infty<\infty\), Lemma~\ref{lm:summable-uniform-perturbations} yields an isomorphism \(h_{\Psi_1}\cong h_\psi\), induced by the identity on finitely supported sequences. The preceding norm estimates also show that \(h_{\Psi_1}\) is isomorphic to the closed linear span of \((u_{i_m})\).
		
		Furthermore, \(\psi\in \mathcal C_{\Phi,\tau}\), so the proof of Lemma~\ref{lm:compactness} gives \(\psi(2x)\leq K\psi(x)\) for \(0\leq x\leq1/2\). Thus \(\psi\) satisfies the \(\Delta_2\) condition at zero, and consequently \(h_\psi=\ell_\psi\). Since the symmetric basis \((d_i)\) is equivalent to each of its subsequences, we obtain \(\ell_\psi=h_\psi \cong h_{\Psi_1} \cong [u_{i_m}] \cong [d_{i_m}] \cong X.\) These equivalences also show that the given symmetric basis of \(X\) is equivalent to the canonical basis of \(\ell_\psi\).
		
		\ref{th:1.2} Necessity. Assume that \(\ell_\psi\subiso\ell_\Phi\), and let \(J:\ell_\psi\longrightarrow\ell_\Phi\) be an isomorphic embedding. The canonical unit-vector sequence \((e_i)\) is a symmetric basis of \(h_\psi\). Consequently, \((Je_i)\) is a symmetric basis of the closed subspace \(J(h_\psi)\). Applying part~\ref{th:1.1} to this subspace and this basis, we obtain a function \(\varphi\in \mathcal C_{\Phi,\tau}\) such that \((Je_i)\) is equivalent to the canonical unit-vector basis of \(\ell_\varphi\).
		
		Since \(J\) is an isomorphism onto its range, there exists \(B\geq1\) such that
		\[
		B^{-1}\left\|\sum_i a_i e_i\right\|_\psi \leq \left\|\sum_i a_i e_i\right\|_\varphi \leq B\left\|\sum_i a_i e_i\right\|_\psi
		\]
		for every finitely supported scalar sequence \((a_i)\). By the equivalence criterion for canonical unit-vector sequences in Orlicz sequence spaces, the functions \(\psi\) and \(\varphi\) are equivalent at zero. Thus \(\varphi\sim\psi\).

	Sufficiency. Suppose that \(\psi\sim\varphi\) for some \(\varphi\in\mathcal C_{\Phi,\tau}\). We first construct an isomorphic embedding of \(\ell_\varphi\) into \(\ell_\Phi\). By Lemma~\ref{lm:compactness}, the function \(\varphi\) is normalized and satisfies the \(\Delta_2\) condition at zero; in particular, \(h_\varphi=\ell_\varphi\).

Starting with \(q_1=n_0\), construct inductively a sequence \(\{g_k\}_{k=1}^{\infty}\), choosing \(g_k\in\mathcal B_{\Phi,\tau,q_k}\) with \(\|g_k-\varphi\|_\infty\leq2^{-k}\). Write its admissible representation as
\[
\begin{aligned}
g_k(t)&=\frac1{A_k}\sum_{j\in E_k}\phi_j(\tau r_{k,j}t),
& A_k&=\sum_{j\in E_k}\phi_j(\tau r_{k,j}),\\
\sum_{j\in E_k}\phi_j(r_{k,j})&=1,
& r_{k,j}&>0,
\end{aligned}
\]
where \(E_k\) is a nonempty finite subset of \(\{q_k,q_k+1,\ldots\}\). After choosing \(E_k\), take \(q_{k+1}=1+\max E_k\). The next approximation exists because \(\varphi\in\overline{\mathcal B_{\Phi,\tau,q_{k+1}}}\). Thus \(\max E_k<\min E_{k+1}\). In the remainder of this argument, each \(g_k\) is defined on all of \([0,\infty)\) by the displayed finite-sum formula, so it is a normalized Orlicz function.

Set \(u_k=\sum_{j\in E_k}r_{k,j}e_j\). The admissibility equality gives \(\rho_\Phi(u_k)=1\), and hence \(\|u_k\|_\Phi=1\). The vectors \(u_k\) form a block basic sequence with successive finite supports. Lemma~\ref{lm:normalizing-constants} gives \(a\leq A_k\leq\tau\leq1\) for every \(k\).

Let \(\Theta=\{g_k\}\) and \(\widetilde\Theta=\{A_kg_k\}\), and define \(Tx=\tau\sum_kx_ku_k\) for every finitely supported scalar sequence \(x=(x_k)\). Disjointness of the supports gives, for every \(r>0\),
\[
\rho_\Phi(Tx/r)=\sum_k A_kg_k(|x_k|/r)
=\rho_{\widetilde\Theta}(x/r).
\]
Consequently, \(\|Tx\|_\Phi=\|x\|_{\widetilde\Theta}\). Moreover, convexity and the bounds on \(A_k\) imply
\[
\rho_\Theta(ax)\leq\rho_{\widetilde\Theta}(x)
\leq\rho_\Theta(x),
\qquad
a\|x\|_\Theta\leq\|Tx\|_\Phi\leq\|x\|_\Theta.
\]

Since \(\sum_k\|g_k-\varphi\|_\infty\leq\sum_k2^{-k}=1\), the norm estimates in the proof of Lemma~\ref{lm:summable-uniform-perturbations} yield
\[
\frac12\|x\|_\varphi\leq\|x\|_\Theta\leq2\|x\|_\varphi.
\]
Combining these estimates gives
\begin{equation}\label{eq:converse-block-embedding}
\frac a2\|x\|_\varphi\leq\|Tx\|_\Phi\leq2\|x\|_\varphi
\end{equation}
for every finitely supported \(x\). These sequences are dense in \(h_\varphi=\ell_\varphi\), so \(T\) extends to a bounded linear operator from \(\ell_\varphi\) into \(\ell_\Phi\). The lower estimate in \eqref{eq:converse-block-embedding} shows that the extension is injective and has closed range, equal to the closed linear span of \((u_k)\). Hence \(\ell_\varphi\subiso\ell_\Phi\).

Finally, equivalence at zero preserves the \(\Delta_2\) condition at zero. Since \(\psi\sim\varphi\), we have \(h_\psi=\ell_\psi\), and the equivalence of the canonical basic sequences gives \(\ell_\psi\cong\ell_\varphi\). Composing this isomorphism with \(T\) proves \(\ell_\psi\subiso\ell_\Phi\). Taking \(\psi(t)=t^p\), where \(1\leq p<\infty\), in the proved equivalence gives the asserted characterization of embedded \(\ell^p\)-spaces.
\par
	\end{proof}
	
	\section{Properties of the Admissible Limiting Family}\label{sec:family-properties}
	
	The next two lemmas relate the admissible family to the classical limiting family and identify it under summable approximation by a power function.
	
	\subsection{Inclusion in the classical limiting family}
	
	\begin{lemma}\label{lm:admissible-inclusion}
		Let \(\Phi=\{\phi_n\}\) be a normalized Musielak--Orlicz function satisfying the \(\delta_2\) condition, with the parameters chosen above. Then \(\mathcal C_{\Phi,\tau}\subseteq C_{\Phi,1},\) where \(C_{\Phi,1}\) is the classical family defined in \eqref{eq:classical-limiting-family}.
	\end{lemma}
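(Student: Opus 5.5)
The plan is to show directly that, for every \(n\geq n_0\), each admissible function \(g\in\mathcal B_{\Phi,\tau,n}\) already lies in \(\Gamma((F_{\phi_j,1})_{j=n}^{\infty})\). Closures and intersections then give the inclusion. Indeed, if \(\mathcal B_{\Phi,\tau,n}\subseteq\Gamma((F_{\phi_j,1})_{j=n}^{\infty})\) for all \(n\geq n_0\), then
\[
\mathcal C_{\Phi,\tau}=\bigcap_{n\geq n_0}\overline{\mathcal B_{\Phi,\tau,n}}\subseteq\bigcap_{n\geq n_0}\overline{\Gamma((F_{\phi_j,1})_{j=n}^{\infty})}.
\]
The sets \(\overline{\Gamma((F_{\phi_j,1})_{j=n}^{\infty})}\) decrease in \(n\), since a convex sum with distinct indices in \(\{n+1,\dots\}\) is also one with indices in \(\{n,\dots\}\). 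Hence the intersection over \(n\geq n_0\) coincides with the intersection over all \(n\geq1\), which is \(C_{\Phi,1}\).

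To verify the pointwise inclusion, write \(g\) as in \eqref{eq:admissible-family} with \(t_j=\tau r_j\). Drop the indices with \(r_j=0\); this is allowed, or vacuous, since \(r_j>0\) by definition. Then rewrite
\[
g(x)=\sum_{j\in E}\frac{\phi_j(t_j)}{A}\cdot\frac{\phi_j(t_jx)}{\phi_j(t_j)}.
\]
The coefficients \(a_j=\phi_j(t_j)/A\) are positive, because \(\phi_j(t)>0\) for \(t>0\), and they sum to \(1\) by the definition of \(A\). The indices \(j\in E\) are distinct and lie in \(\{n,n+1,\dots\}\). It remains to check that \(x\mapsto\phi_j(t_jx)/\phi_j(t_j)\) belongs to \(F_{\phi_j,1}\), which requires \(0<t_j\leq1\).

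This bound on \(t_j\) is the only point needing care. Admissibility \eqref{eq:admissibility} gives \(\phi_j(r_j)\leq1=\phi_j(1)\). Since each Orlicz function is strictly increasing, as noted in the weighted-space discussion, this yields \(r_j\leq1\). Then \(\tau\leq1\) from \eqref{eq:choice-tau} gives \(t_j=\tau r_j\leq\tau\leq1\).

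This argument shows in fact that \(\mathcal C_{\Phi,\tau}\subseteq C_{\Phi,\tau}\subseteq C_{\Phi,1}\). I do not expect a real obstacle. The only subtleties are positivity of the coefficients, which uses \(\phi_j(t)>0\) for \(t>0\), and the monotonicity step \(r_j\leq1\). If one wants care, the latter can be justified by convexity, exactly as in the inverse-function remark for \(M\).
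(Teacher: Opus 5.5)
Your proposal is correct and follows the paper's proof essentially step for step. You rewrite \(g\) as a convex combination of normalized dilations with weights \(\phi_j(\tau r_j)/A\), use strict monotonicity to get \(r_j\leq1\) so that each dilation lies in \(F_{\phi_j,1}\), and then take closures and intersect over tails. Your observation that the tail closures are nested fills in a small detail the paper leaves implicit: intersecting over \(n\geq n_0\) instead of \(n\geq1\) loses nothing.
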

	
	\begin{proof}
		Fix \(n\geq n_0\), and let \(g\in\mathcal B_{\Phi,\tau,n}\). Write
		\[
		g(x)=\frac1A\sum_{j\in E}\phi_j(\tau r_jx), \qquad A=\sum_{j\in E}\phi_j(\tau r_j),
		\]
		where \(E\subseteq\{n,n+1,\ldots\}\) is finite and nonempty, \(r_j>0\), and \(\sum_{j\in E}\phi_j(r_j)=1\).
		
		Since \(\phi_j(r_j)\leq1=\phi_j(1)\), strict monotonicity gives \(r_j\leq1\), and hence \(0<\tau r_j\leq1\). We may therefore write
		\[
		g(x)=\sum_{j\in E} \frac{\phi_j(\tau r_j)}{A} \frac{\phi_j(\tau r_jx)}{\phi_j(\tau r_j)}.
		\]
		
		The coefficients are positive and sum to one, and the indices in \(E\) are distinct. Each normalized dilation belongs to \(F_{\phi_j,1}\). Consequently, \(g\in\Gamma((F_{\phi_j,1})_{j=n}^{\infty})\). Taking closures in \(C[0,1]\) and then intersecting over the tails yields \(\mathcal C_{\Phi,\tau}\subseteq C_{\Phi,1}\).
	\end{proof}
	
	\subsection{Summable approximation by a power function}
	
	\begin{lemma}\label{lm:power-singleton-family}
		Let \(\Phi=\{\phi_n\}\) be a normalized Musielak--Orlicz function satisfying the \(\delta_2\) condition, and choose \(\tau,n_0,a\) as in \eqref{eq:choice-tau}, \eqref{eq:choice-n0}, and Lemma~\ref{lm:normalizing-constants}. For some \(1\leq p<\infty\), set \(\varepsilon_n=\sup_{0\leq t\leq1}|\phi_n(t)-t^p|\) and assume that \(\sum_{n=1}^{\infty}\varepsilon_n<\infty\).
		
		Then \(\mathcal C_{\Phi,\tau}=\{t\mapsto t^p\}\).
	\end{lemma}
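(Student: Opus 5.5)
The plan is to prove the two inclusions separately. By Lemma~\ref{lm:compactness}, \(\mathcal C_{\Phi,\tau}\) is nonempty. It therefore suffices to show that every \(f\in\mathcal C_{\Phi,\tau}\) coincides with \(t\mapsto t^p\) on \([0,1]\). To do this, I will show that every element of \(\mathcal B_{\Phi,\tau,n}\) is uniformly close to \(t^p\), with an error controlled by the tail \(\sum_{j\geq n}\varepsilon_j\). That estimate then passes to the closure and to the intersection over \(n\).

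Fix \(n\geq n_0\) and \(g\in\mathcal B_{\Phi,\tau,n}\), written as in \eqref{eq:admissible-family} with \(\sum_{j\in E}\phi_j(r_j)=1\). As in the proof of Lemma~\ref{lm:admissible-inclusion}, strict monotonicity and \(\phi_j(1)=1\) give \(r_j\leq1\). Hence \(0<\tau r_jx\leq1\) for \(x\in[0,1]\), so
\[
\bigl|\phi_j(\tau r_jx)-(\tau r_j)^px^p\bigr|\leq\varepsilon_j .
\]
Set \(B=\sum_{j\in E}(\tau r_j)^p\). Summing over \(j\in E\) gives two estimates:
\[
\Bigl|Ag(x)-Bx^p\Bigr|\leq\sum_{j\geq n}\varepsilon_j,
\qquad
|A-B|\leq\sum_{j\geq n}\varepsilon_j,
\]
where the second is the case \(x=1\). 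Dividing by \(A\) and using the lower bound \(A\geq a\) from Lemma~\ref{lm:normalizing-constants}, I expect
\[
|g(x)-x^p|\leq\Bigl|g(x)-\tfrac{B}{A}x^p\Bigr|+x^p\Bigl|\tfrac{B}{A}-1\Bigr|
\leq\frac{2}{a}\sum_{j\geq n}\varepsilon_j
\]
for all \(x\in[0,1]\).

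This uniform bound is preserved under uniform limits, so it holds for every element of \(\overline{\mathcal B_{\Phi,\tau,n}}\). Now let \(f\in\mathcal C_{\Phi,\tau}\). Then \(f\) lies in \(\overline{\mathcal B_{\Phi,\tau,n}}\) for every \(n\geq n_0\), so \(\|f-t^p\|_\infty\leq 2a^{-1}\sum_{j\geq n}\varepsilon_j\) for all such \(n\). Since \((\varepsilon_j)\) is summable, the right-hand side tends to zero, and hence \(f(t)=t^p\) on \([0,1]\). Combined with nonemptiness, this gives \(\mathcal C_{\Phi,\tau}=\{t\mapsto t^p\}\).

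The only delicate point is the division by \(A\). Without a uniform lower bound, the summed errors \(\sum\varepsilon_j\) could be amplified arbitrarily by the normalization. This is exactly where Lemma~\ref{lm:normalizing-constants}, and hence the \(\delta_2\) hypothesis together with the choice of \(n_0\), is needed. A second check is that the dilation arguments \(\tau r_jx\) stay in \([0,1]\), where \(\varepsilon_j\) controls the deviation; this is guaranteed by \(r_j\leq1\) and \(\tau\leq1\).
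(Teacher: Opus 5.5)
Your proposal is correct and follows essentially the same route as the paper. You bound each admissible \(g\) uniformly by \(\frac{2}{a}\sum_{j\geq n}\varepsilon_j\), using \(r_j\leq1\) and \(A\geq a\). The paper does the same, except that it compares \(\phi_j(\tau r_jt)\) with \(t^p\phi_j(\tau r_j)\) index by index instead of passing through your intermediate quantity \(B=\sum_{j\in E}(\tau r_j)^p\); it then passes to the closures and the intersection, with nonemptiness from Lemma~\ref{lm:compactness}.
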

	
	\begin{proof}
		Fix \(n\geq n_0\) and \(g\in \mathcal B_{\Phi,\tau,n}\). By definition,
		\[
		g(t)=\frac1A\sum_{j\in E}\phi_j(\tau r_jt), \qquad A=\sum_{j\in E}\phi_j(\tau r_j), \qquad \sum_{j\in E}\phi_j(r_j)=1,
		\]
		where \(E\) is a finite subset of \(\{n,n+1,\ldots\}\) and \(r_j>0\). Normalization implies \(r_j\leq1\), while Lemma~\ref{lm:normalizing-constants} gives \(A\geq a>0\).
		
		For \(0\leq t\leq1\), both \(\tau r_jt\) and \(\tau r_j\) belong to \([0,1]\). Using \((\tau r_jt)^p=t^p(\tau r_j)^p\), we obtain
		\[
		\begin{aligned}
			|g(t)-t^p|
			&=\frac1A\left|\sum_{j\in E}
			\bigl(\phi_j(\tau r_jt)-t^p\phi_j(\tau r_j)\bigr)\right|\\
			&\leq\frac1A\sum_{j\in E}(1+t^p)\varepsilon_j
			\leq\frac2a\sum_{j\geq n}\varepsilon_j.
		\end{aligned}
		\]
		
		The same uniform estimate holds for every function in the closure of \(\mathcal B_{\Phi,\tau,n}\). Consequently, if \(f\in\mathcal C_{\Phi,\tau}\), then \(\|f-t^p\|_\infty\leq(2/a)\sum_{j\geq n}\varepsilon_j\) for every \(n\geq n_0\). Letting \(n\to\infty\), we conclude that \(f(t)=t^p\) on \([0,1]\). Since \(\mathcal C_{\Phi,\tau}\) is nonempty by Lemma~\ref{lm:compactness}, the assertion follows.
	\end{proof}
	
	In \cite[Examples~1 and~2]{FuentesHernandez88}, the normalized generating functions satisfy the summable approximation assumption of Lemma~\ref{lm:power-singleton-family} with \(p=1\) and \(p=2\), respectively. The resulting spaces are shown there to be isomorphic to \(\ell^1\) and \(\ell^2\), and hence are separable. By the standard separability criterion for Musielak--Orlicz sequence spaces generated by finite-valued convex functions, their normalized generating sequences satisfy \(\delta_2\). Thus Lemma~\ref{lm:power-singleton-family} identifies the corresponding admissible limiting families as \(\{t\mapsto t\}\) and \(\{t\mapsto t^2\}\). The additional conclusion here is the explicit identification of these admissible families.
	
	\section{Copies of Orlicz Sequence Spaces in Weighted Orlicz Spaces}
	\label{sec:weighted-copies}
	
	Let $M$ be an Orlicz function and let $w=(w_n)$ be a sequence of positive weights. We use the weighted generators \(\Phi^w\) and \(\widehat\Phi^w\) from Section~\ref{sec:preliminaries}.
	
	For the embedding characterization, we assume that \(\widehat\Phi^w\) satisfies the \(\delta_2\) condition. This is equivalent to requiring \(\Phi^w\in\delta_2\). Indeed, if \(s_n=M^{-1}(1/w_n)\), then \(\widehat\phi_n^w(t)=\phi_n^w(s_nt)\), and the change of variable \(u=s_nt\) transfers the defining inequalities in either direction, with the same constants and summable error sequence.
	
	For the classical characterization under the additional assumptions that \(M\) satisfies the global \(\Delta_2\) condition and \(w\in\mathcal A\), see Theorem~\ref{th:weighted-Orlicz-characterization} and Corollary~\ref{cor:weighted-lp-characterization} in Section~\ref{sec:preliminaries}. The result below gives a necessary and sufficient condition in terms of the admissible family \(\mathcal C_{\widehat\Phi^w,\tau}\).
	
	\begin{theorem}\label{th:weighted-Orlicz-copy-necessary}
		Let \(M\) be an Orlicz function and let \(w=(w_n)\) be a sequence of positive weights. Suppose that the associated Musielak--Orlicz function \(\widehat\Phi^w\) satisfies the \(\delta_2\) condition. For every Orlicz function \(\psi\), one has \(\ell_\psi\subiso\ell_M(w)\) if and only if there exists \(\varphi\in\mathcal C_{\widehat\Phi^w,\tau}\) such that \(\psi\sim\varphi\).
	\end{theorem}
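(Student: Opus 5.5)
The plan is to reduce the statement to Theorem~\ref{prop:1}\ref{th:1.2}, applied to the normalized Musielak--Orlicz function \(\widehat\Phi^w=\{\widehat\phi_n^w\}\). In Section~\ref{sec:preliminaries} we saw that the diagonal map \(D\), \((Dx)_n=x_n/s_n\) with \(s_n=M^{-1}(1/w_n)\), is a linear isometry of \(\ell_M(w)\) onto \(\ell_{\widehat\Phi^w}\). We also saw that \(\widehat\phi_n^w(1)=w_nM(s_n)=1\), so \(\widehat\Phi^w\) is normalized.

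First I check that the hypotheses of Theorem~\ref{prop:1} hold. Normalization was just verified, and the \(\delta_2\) condition for \(\widehat\Phi^w\) is assumed. The parameters \(K,\delta,(c_n),\tau,n_0\) are fixed for \(\widehat\Phi^w\) as in Section~\ref{sec:admissible-blocks}, so the family \(\mathcal C_{\widehat\Phi^w,\tau}\) is well defined. By Lemma~\ref{lm:compactness} it is nonempty and compact, and its members extend to normalized Orlicz functions satisfying \(\Delta_2\) at zero.

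Next I transfer the embedding question. Since \(D\) is an onto isometry, a Banach space \(Y\) satisfies \(Y\subiso\ell_M(w)\) if and only if \(Y\subiso\ell_{\widehat\Phi^w}\). For the direct implication, compose an embedding \(J:Y\to\ell_M(w)\) with \(D\). For the converse, compose with \(D^{-1}\). Taking \(Y=\ell_\psi\), we get \(\ell_\psi\subiso\ell_M(w)\) if and only if \(\ell_\psi\subiso\ell_{\widehat\Phi^w}\). Theorem~\ref{prop:1}\ref{th:1.2} then says this holds if and only if \(\psi\sim\varphi\) for some \(\varphi\in\mathcal C_{\widehat\Phi^w,\tau}\).

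The only point needing care is that \(D\) is defined on all of \(\ell_M(w)\) and is onto \(\ell_{\widehat\Phi^w}\), not merely on finitely supported sequences. This follows from the modular identity \(\rho_{\widehat\Phi^w}(Dx/r)=\sum_n w_nM(|x_n|/r)\) recorded in Section~\ref{sec:preliminaries}. That identity holds for every scalar sequence and every \(r>0\), so membership and the Luxemburg norms correspond exactly. I do not expect a genuine obstacle: the proof is a direct transport of Theorem~\ref{prop:1} through this isometry.
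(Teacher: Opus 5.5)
Your proposal is correct and follows the paper's proof: transfer the embedding question through the diagonal isometry \(D\) of \(\ell_M(w)\) onto \(\ell_{\widehat\Phi^w}\), then apply Theorem~\ref{prop:1}\ref{th:1.2} to the normalized generators \(\widehat\Phi^w\), which satisfy \(\delta_2\) by hypothesis. Your extra check rests on the modular identity, valid for every scalar sequence and every \(r>0\). It justifies that \(D\) is an onto isometry of the whole space, which is the same justification the paper records in Section~\ref{sec:preliminaries}.
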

	
	\begin{proof}
	By \eqref{eq:normalized-weighted-function}, the diagonal map \(D:\ell_M(w)\longrightarrow\ell_{\widehat\Phi^w}\), defined by \((Dx)_n=x_n/M^{-1}(1/w_n)\), is a linear isometry onto \(\ell_{\widehat\Phi^w}\). Composition with \(D\) or \(D^{-1}\) therefore gives
\[
\ell_\psi\subiso\ell_M(w)
\quad\Longleftrightarrow\quad
\ell_\psi\subiso\ell_{\widehat\Phi^w}.
\]
Since \(\widehat\Phi^w\) is normalized and satisfies \(\delta_2\), Theorem~\ref{prop:1}\ref{th:1.2} identifies the latter assertion with the existence of \(\varphi\in\mathcal C_{\widehat\Phi^w,\tau}\) such that \(\psi\sim\varphi\). This proves both implications.
\par
	\end{proof}
	
	Taking $\psi(x)=x^p$ gives the following consequence.
	
	\begin{corollary}\label{cor:weighted-lp-copy-necessary}
		Under the assumptions of Theorem~\ref{th:weighted-Orlicz-copy-necessary}, let \(1\leq p<\infty\). Then \(\ell^p\subiso\ell_M(w)\) if and only if there exists \(\varphi\in\mathcal C_{\widehat\Phi^w,\tau}\) equivalent at zero to \(t\mapsto t^p\).
	\end{corollary}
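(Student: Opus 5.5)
The plan is to derive Corollary~\ref{cor:weighted-lp-copy-necessary} directly from Theorem~\ref{th:weighted-Orlicz-copy-necessary} by specializing \(\psi(t)=t^p\). The only work is to confirm that this choice is admissible, i.e.\ that \(t\mapsto t^p\) is an Orlicz function in the sense of Section~\ref{sec:preliminaries}, and that \(\ell_\psi\) is then \(\ell^p\) with an equivalent norm.

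The first step is to check the Orlicz-function axioms for \(\psi(t)=t^p\) with \(1\leq p<\infty\). It is continuous and convex on \(\mathbb R_+\), since \(p\geq1\). It satisfies \(\psi(0)=0\) and is positive for \(t>0\). So \(\psi\) is an Orlicz function.

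The second step is to identify \(\ell_\psi\) with \(\ell^p\). The modular of \(\ell_\psi\) is \(\sum_n|x_n|^p\), so \(\ell_\psi\) and \(\ell^p\) have the same elements. The Luxemburg norm satisfies \(\sum_n|x_n/r|^p\leq1\) if and only if \(r\geq\|x\|_p\), so it coincides with the \(\ell^p\) norm. Hence \(\ell_\psi=\ell^p\) isometrically, and \(\ell^p\subiso\ell_M(w)\) is literally the statement \(\ell_\psi\subiso\ell_M(w)\).

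The third step invokes Theorem~\ref{th:weighted-Orlicz-copy-necessary}, whose hypotheses (\(M\) an Orlicz function, positive weights, \(\widehat\Phi^w\in\delta_2\)) are exactly those assumed in the corollary. It gives \(\ell^p\subiso\ell_M(w)\) if and only if some \(\varphi\in\mathcal C_{\widehat\Phi^w,\tau}\) satisfies \(t^p\sim\varphi\), which is the claimed equivalence. There is no real obstacle here. The only point needing care is that "\(\sim\)" denotes equivalence at zero, and the elements of \(\mathcal C_{\widehat\Phi^w,\tau}\) are understood through their Orlicz extensions from Lemma~\ref{lm:compactness}. This is harmless because equivalence at zero only involves values on a small interval \([0,t_0]\subseteq[0,1]\).
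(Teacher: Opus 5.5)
Your proposal is correct and matches the paper, which obtains the corollary simply by taking \(\psi(t)=t^p\) in Theorem~\ref{th:weighted-Orlicz-copy-necessary}. Your checks that \(t^p\) is an Orlicz function and that \(\ell_\psi=\ell^p\) isometrically only make explicit what the paper leaves implicit.
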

	
	\section{An Upper Growth Index: Embeddings and Cotype}
	\label{sec:upper-index}
	
	We next introduce an upper growth index associated with a Musielak--Orlicz function and a fixed choice of its \(\delta_2\)-data. Related growth indices were considered in \cite{Yamamuro54} under uniform growth assumptions.
	
	Let \(\Phi=\{\phi_n\}\) be a normalized Musielak--Orlicz function satisfying the \(\delta_2\) condition. Fix constants \(K\geq2\), \(0<\delta\leq1\), and a nonnegative sequence \((c_n)\in\ell^1\) such that \(\phi_n(2x)\leq K\phi_n(x)+c_n\) whenever \(\phi_n(x)\leq\delta\). These restrictions on \(K\) and \(\delta\) cause no loss of generality, since \(K\) may be increased and \(\delta\) may be decreased.
	
	\begin{lemma}\label{lm:gamma}
		Let \(\Phi=\{\phi_n\}\) be a normalized Musielak--Orlicz function, and let \(0<\delta\leq1\). Then \(\gamma_\delta = \inf_{n\in\mathbb N}\phi_n^{-1}(\delta) \geq\delta>0.\)
	\end{lemma}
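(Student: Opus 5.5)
The plan is to show, for each fixed \(n\), that \(\phi_n^{-1}(\delta)\geq\delta\); the infimum over \(n\) then inherits the same lower bound, and \(\delta>0\) gives positivity. The only inputs are the normalization \(\phi_n(1)=1\), convexity, and \(\phi_n(0)=0\). Here \(\phi_n^{-1}\) is the ordinary inverse. It is well defined on \(\mathbb R_+\) by the same argument the paper gives for \(M^{-1}\) in the weighted setting: each \(\phi_n\) is continuous, strictly increasing and unbounded, so \(\phi_n^{-1}(\delta)\) is the unique \(x_n>0\) with \(\phi_n(x_n)=\delta\).

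First, fix \(n\) and observe that \(x_n\leq 1\). Indeed, \(\delta\leq1=\phi_n(1)\), and \(\phi_n\) is strictly increasing. Next, apply convexity with \(\phi_n(0)=0\) on the interval \([0,1]\): for \(0\leq x\leq1\),
\[
\phi_n(x)=\phi_n\bigl(x\cdot1+(1-x)\cdot0\bigr)\leq x\,\phi_n(1)=x .
\]
Taking \(x=x_n\) gives \(\delta=\phi_n(x_n)\leq x_n=\phi_n^{-1}(\delta)\).

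Since this bound is uniform in \(n\), we obtain \(\gamma_\delta=\inf_n\phi_n^{-1}(\delta)\geq\delta>0\). There is no real obstacle. The one point needing care is that the chord inequality \(\phi_n(x)\leq x\) is available only on \([0,1]\), which is why \(x_n\leq1\) must be checked first. This is exactly where the restriction \(\delta\leq1\) is used.
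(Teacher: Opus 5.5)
Your proof is correct and is essentially the paper's argument: both use convexity with \(\phi_n(0)=0\) and \(\phi_n(1)=1\) to get \(\phi_n(x)\leq x\) on \([0,1]\), together with monotonicity of \(\phi_n\). The only difference is bookkeeping. You evaluate the chord bound at \(x_n=\phi_n^{-1}(\delta)\) after checking \(x_n\leq1\), whereas the paper evaluates it at \(\delta\) to get \(\phi_n(\delta)\leq\delta\) and then inverts using monotonicity.
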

	
	\begin{proof}
		Since every \(\phi_n\) is convex, \(\phi_n(0)=0\), and \(\phi_n(1)=1\), for every \(x\in[0,1]\) we have \(\phi_n(x) \leq x\phi_n(1)+(1-x)\phi_n(0) =x.\) In particular, \(\phi_n(\delta)\leq\delta\). Since \(\phi_n\) is increasing, it follows that \(\phi_n^{-1}(\delta)\geq\delta\) for every \(n\). Therefore, \(\gamma_\delta = \inf_{n\in\mathbb N}\phi_n^{-1}(\delta) \geq\delta>0.\)
	\end{proof}

By Lemma~\ref{lm:gamma}, we have \(\gamma_\delta\geq\delta>0\). Assume that every \(\phi_n\) is differentiable on \((0,\gamma_\delta]\), with the derivative at \(\gamma_\delta\) understood from the left.

	The parameters \(\tau,n_0,a\) and the family \(\mathcal C_{\Phi,\tau}\) used below are chosen from these same fixed data, as in \eqref{eq:choice-tau}, \eqref{eq:choice-n0}, and Lemma~\ref{lm:normalizing-constants}. In particular, \(\tau\leq\delta\).
	
	Fix \(0<x\leq\gamma_\delta\). Convexity gives the inequality
\[
\phi_n(2x)\geq\phi_n(x)+x\phi_n'(x).
\]

At \(x=\gamma_\delta\), the derivative in this inequality is the left derivative, as stipulated above. Also, \(x\leq\gamma_\delta\leq\phi_n^{-1}(\delta)\), so \(\phi_n(x)\leq\delta\). The chosen \(\delta_2\)-estimate therefore gives \(\phi_n(2x)\leq K\phi_n(x)+c_n\). Combining the two estimates, we obtain
\[
x\phi_n'(x)\leq\phi_n(2x)-\phi_n(x)\leq(K-1)\phi_n(x)+c_n.
\]

With respect to the fixed \(\delta_2\)-data \(K\), \(\delta\), and \((c_n)\), define
\begin{align*}
\beta_{n,\delta}
&=\sup_{0<t\leq\gamma_\delta}
\frac{t\phi_n'(t)-c_n}{\phi_n(t)},\\
b_{\Phi,(c_n),\delta}
&=\limsup_{n\to\infty}\beta_{n,\delta}.
\end{align*}
The dependence on \(\delta\) is made explicit because this parameter determines the interval \((0,\gamma_\delta]\) over which the supremum is taken. For fixed \(\Phi\), \((c_n)\), and \(\delta\), the value of the index does not depend on the choice of the admissible constant \(K\). That constant is used to verify the growth estimate and to obtain the upper bound below.
\par 
	For every \(n\) and \(0<t\leq\gamma_\delta\), the definition of \(\gamma_\delta\) gives \(t\leq\phi_n^{-1}(\delta)\), and hence \(\phi_n(t)\leq\delta\). The preceding differential estimate therefore applies throughout the interval defining \(\beta_{n,\delta}\), yielding \(\beta_{n,\delta}\leq K-1\) and \(b_{\Phi,(c_n),\delta}\leq K-1<\infty\).

	\begin{theorem}\label{th:upper-index}
		Let \(\ell_\Phi\) be a Musielak--Orlicz sequence space generated by a normalized Musielak--Orlicz function \(\Phi=\{\phi_n\}\) satisfying the \(\delta_2\) condition, and fix \(\delta_2\)-data \(K\), \(\delta\), and \((c_n)\) as above. Assume that every \(\phi_n\) is differentiable on \((0,\gamma_\delta]\), where \(\gamma_\delta=\inf_n\phi_n^{-1}(\delta)\) and the derivative at \(\gamma_\delta\) is taken from the left. If \(\ell^p\), where \(1\leq p<\infty\), is isomorphic to a subspace of \(\ell_\Phi\), then \(p\leq b_{\Phi,(c_n),\delta}.\)
	\end{theorem}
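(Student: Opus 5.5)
The plan is to reduce the embedding question to a lower power bound on every member of the admissible limiting family, and then compare with \(t^p\) near zero. By Theorem~\ref{prop:1}\ref{th:1.2}, the hypothesis \(\ell^p\subiso\ell_\Phi\) yields some \(\varphi\in\mathcal C_{\Phi,\tau}\) with \(\varphi\sim t^p\) at zero. It therefore suffices to prove the following claim.

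\emph{Claim:} every \(f\in\mathcal C_{\Phi,\tau}\) satisfies \(f(x)\geq x^{b}\) on \([0,1]\), where \(b=b_{\Phi,(c_n),\delta}\).

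Granting the claim, equivalence at zero supplies \(a',t_0>0\) with \(\varphi(a't)\leq t^p\) for \(0\leq t\leq t_0\). Then \((a')^{b}t^{b}\leq t^p\) for all small \(t>0\), which is impossible if \(p>b\). Hence \(p\leq b\).

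To prove the claim, first fix \(\varepsilon>0\) and set \(\beta=\max\{b+\varepsilon,\varepsilon\}>0\). Positivity of \(\beta\) is needed below to divide by it. By the definition of the \(\limsup\), there is \(N\) with \(\beta_{j,\delta}\leq\beta\) for \(j\geq N\). This means
\[
t\phi_j'(t)\leq\beta\phi_j(t)+c_j,\qquad 0<t\leq\gamma_\delta,\ j\geq N.
\]
Now take \(n\geq\max\{N,n_0\}\) and \(g\in\mathcal B_{\Phi,\tau,n}\) as in~\eqref{eq:admissible-family}. As in the proof of Lemma~\ref{lm:admissible-inclusion}, \(r_j\leq1\). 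Hence for \(0<x\leq1\) the arguments satisfy \(\tau r_jx\leq\tau\leq\delta\leq\gamma_\delta\) by Lemma~\ref{lm:gamma}. Consequently \(g\) is differentiable on \((0,1]\). Applying the displayed estimate term by term and using \(A\geq a\) from Lemma~\ref{lm:normalizing-constants} gives
\[
xg'(x)\leq\beta g(x)+\eta_n,\qquad \eta_n=\frac1a\sum_{j\geq n}c_j .
\]

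Next I integrate this differential inequality. Put \(h=g+\eta_n/\beta\), so that \(h>0\) on \((0,1]\) and \(xh'\leq\beta h\). Thus \(x\mapsto h(x)/x^\beta\) is nonincreasing on \((0,1]\), which gives
\[
g(x)+\eta_n/\beta\geq x^\beta\bigl(1+\eta_n/\beta\bigr)\geq x^\beta,\qquad 0<x\leq1 .
\]
The bound \(g(x)\geq x^\beta-\eta_n/\beta\) is a pointwise inequality with a constant independent of \(g\). It therefore persists under uniform limits, so it holds on \(\overline{\mathcal B_{\Phi,\tau,n}}\) and in particular for every \(f\in\mathcal C_{\Phi,\tau}\), for all large \(n\). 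Letting \(n\to\infty\) kills \(\eta_n\), since \((c_j)\in\ell^1\), and gives \(f(x)\geq x^{\beta}\). Finally, letting \(\varepsilon\to0\) gives \(f(x)\geq x^{\max\{b,0\}}\), which is \(x^b\) when \(b\geq0\). If \(b<0\), the claim yields \(p\leq 0\), contradicting \(p\geq1\), so that case is excluded and the argument still concludes.

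The main obstacle I expect is the step of passing the growth bound from the generators \(\phi_j\) to the admissible blocks \(g\). It requires that the arguments \(\tau r_jx\) stay inside \((0,\gamma_\delta]\), where both the differentiability hypothesis and the definition of \(\beta_{j,\delta}\) apply. It also requires controlling the accumulated errors \(\sum c_j/A\) uniformly through the bound \(A\geq a\). The ODE comparison itself is routine once \(\beta>0\) is ensured, which is why I replace \(b+\varepsilon\) by \(\max\{b+\varepsilon,\varepsilon\}\).
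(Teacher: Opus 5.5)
Your proposal is correct and follows essentially the same route as the paper. The paper likewise sums the estimate \(t\phi_j'(t)\leq r\phi_j(t)+c_j\) over an admissible block, controls the errors through \(A\geq a\), uses monotonicity of \((g+d)/x^r\) to get a power lower bound that survives uniform limits, and contradicts \(\varphi\sim t^p\). Your differences are cosmetic: you prove the bound for all of \(\mathcal C_{\Phi,\tau}\) and let \(\varepsilon\to0\), whereas the paper fixes \(r\in(\max\{b,0\},p)\) and argues along one approximating sequence; in fact \(b\geq1\) automatically, since \(t\phi_n'(t)\geq\phi_n(t)\) and \(\phi_n(\tau)\geq a\) for \(n\geq n_0\), so your \(b<0\) case never arises.
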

	
	\begin{proof}
		Suppose, to the contrary, that \(p>b_{\Phi,(c_n),\delta}\). Choose \(r\) such that \(\max\{b_{\Phi,(c_n),\delta},0\}<r<p.\) By the definition of \(b_{\Phi,(c_n),\delta}\), there exists \(N\in\mathbb N\) such that \(\beta_{n,\delta}<r\) for every \(n\geq N\). Hence, for every \(n\geq N\) and every \(t\in(0,\gamma_\delta]\), \(\frac{t\phi_n'(t)-c_n}{\phi_n(t)} \leq\beta_{n,\delta}<r.\) Equivalently,
		\begin{equation}\label{eq:index-differential-inequality}
			t\phi_n'(t)-r\phi_n(t)-c_n<0.
		\end{equation}
		
		Since \(\ell^p\subiso \ell_\Phi\), Theorem~\ref{prop:1} \ref{th:1.2} yields an Orlicz function \(\psi\in \mathcal C_{\Phi,\tau}\) such that \(\psi\sim x^p\). Set \(n_k=\max\{n_0,N\}+k\). Since \(\psi\in\overline{\mathcal B_{\Phi,\tau,n_k}}\) for every \(k\), choose a sequence \(\{\psi_k\}_{k=1}^{\infty}\) with \(\psi_k\in\mathcal B_{\Phi,\tau,n_k}\) such that \(\|\psi_k-\psi\|_\infty\leq2^{-k}\).
		For every \(k\), write
		\[
		\psi_k(x) = \frac{1}{A_k} \sum_{j\in E_k}\phi_j(\tau r_jx), \qquad A_k = \sum_{j\in E_k}\phi_j(\tau r_j),
		\]
		where \(E_k\) is a nonempty finite subset of \(\{n_k,n_k+1,\ldots\}\), \(r_j>0\), and \(\sum_{j\in E_k}\phi_j(r_j)=1.\) In particular, \(\phi_j(r_j)\leq1=\phi_j(1)\). Since every \(\phi_j\) is increasing, it follows that \(0<r_j\leq1\). Moreover, for \(0<x\leq1\), \(0<\tau r_jx\leq\tau\leq\delta\leq\gamma_\delta.\) Therefore, inequality~\eqref{eq:index-differential-inequality} can be applied with \(t=\tau r_jx\).
		
		Set \(d_k=(rA_k)^{-1}\sum_{j\in E_k}c_j\). Summing inequality~\eqref{eq:index-differential-inequality} with \(t=\tau r_jx\) over \(j\in E_k\), and dividing by \(A_k\), gives \(x\psi_k'(x)-r\psi_k(x)-rd_k<0,  0<x\leq1.\) Consequently,
		\[
		\frac{d}{dx}\left(\frac{\psi_k(x)+d_k}{x^r}\right) =\frac{x\psi_k'(x)-r\psi_k(x)-rd_k}{x^{r+1}}<0.
		\]
		
		Thus \(x\mapsto(\psi_k(x)+d_k)/x^r\) is decreasing on \((0,1]\).
		
		By Lemma~\ref{lm:normalizing-constants}, \(A_k\geq a>0\). Hence \(0\leq d_k \leq\frac{1}{ra}\sum_{j=n_k}^{\infty}c_j \longrightarrow0.\) Since \(\psi_k(1)=1\), monotonicity yields \(\psi_k(x)+d_k\geq(1+d_k)x^r\) for every \(x\in(0,1]\). Passing to the limit, using \(\psi_k\to\psi\) uniformly and \(d_k\to0\), we obtain \(\psi(x)\geq x^r\).
		
		On the other hand, \(\psi\sim x^p\) implies that \(\psi(x)\leq Bx^p\) for some \(B>0\) and all sufficiently small \(x>0\). Therefore, \(1\leq Bx^{p-r}\), which is impossible as \(x\to0+\), since \(p-r>0\). This contradiction proves that \(p\leq b_{\Phi,(c_n),\delta}\).
	\end{proof}
	
The same differential estimate also controls the cotype of the whole space. The following consequence uses the characterization~\eqref{eq:cotype-characterization}.

\begin{corollary}\label{cor:cotype-upper-index}
Let \(\Phi=\{\phi_n\}\) be normalized and satisfy \(\delta_2\). Fix \(\delta_2\)-data \(K\geq2\), \(0<\delta\leq1\), and \((c_n)\in\ell^1\) with \(c_n\geq0\). Assume that every \(\phi_n\) is differentiable on \((0,\gamma_\delta]\), with the derivative at \(\gamma_\delta\) taken from the left. Put \(b=b_{\Phi,(c_n),\delta}\). Then
\[
\Phi\in\delta^r\qquad\text{for every }r>\max\{1,b\}.
\]
Consequently, \(\ell_\Phi\) has cotype \(q\) for every \(q\geq2\) such that \(q>b\), and
\begin{equation}\label{eq:cotype-upper-index}
\operatorname{cot}(\ell_\Phi)
\leq\max\{2,b_{\Phi,(c_n),\delta}\}.
\end{equation}
In particular, if \(b_{\Phi,(c_n),\delta}<2\), then \(\ell_\Phi\) has cotype \(2\).
\end{corollary}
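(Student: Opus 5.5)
We verify \(\Phi\in\delta^r\) directly from the differential inequality, in the form \eqref{eq:delta-power-cotype}. The cotype statements then follow from \eqref{eq:cotype-characterization}.

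Fix \(r>\max\{1,b\}\). By the definition of \(b=\limsup_n\beta_{n,\delta}\), there is \(N\) with \(\beta_{n,\delta}<r\) for all \(n\geq N\). As in the proof of Theorem~\ref{th:upper-index}, for \(n\geq N\) and \(0<t\leq\gamma_\delta\) this gives
\[
t\phi_n'(t)-r\phi_n(t)-rd_n\leq0,\qquad d_n=c_n/r.
\]
Hence \(t\mapsto(\phi_n(t)+d_n)/t^r\) is nonincreasing on \((0,\gamma_\delta]\). Take \(\eta=\delta\). Suppose \(\phi_n(\lambda t)\leq\delta\) with \(\lambda\geq1\) and \(t>0\). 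Then \(\lambda t\leq\phi_n^{-1}(\delta)\), but we need \(\lambda t\leq\gamma_\delta\), which is an infimum over all \(n\). This is the main obstacle, and I would resolve it by shrinking the threshold. Choose \(\eta=\min_n\phi_n(\gamma_\delta)\) if that minimum is positive. In general, use instead the threshold \(\eta\leq\delta\) together with the observation that \(\phi_n(\lambda t)\leq\eta\) and normalization give \(\lambda t\leq\eta\) whenever \(\eta\leq\delta\leq\gamma_\delta\)? That fails, since \(\phi_n(x)\leq x\) goes the wrong way.

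So the cleaner route is a case split. If \(\lambda t\leq\gamma_\delta\), monotonicity gives
\[
\phi_n(\lambda t)\leq\phi_n(\lambda t)+d_n\leq\lambda^r\bigl(\phi_n(t)+d_n\bigr).
\]
If \(\gamma_\delta<\lambda t\) while \(\phi_n(\lambda t)\leq\delta\), then \(\lambda t\in(\gamma_\delta,\phi_n^{-1}(\delta)]\). Here the \(\delta_2\) condition, applied boundedly many times, shows that \(\phi_n(\lambda t)\) is comparable to \(\phi_n(\gamma_\delta)\). More precisely, I would first show that \(\phi_n^{-1}(\delta)\leq 2^s\gamma_\delta\) for a fixed \(s\) independent of \(n\), up to a summable error. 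To do this, compare \(\phi_n(\gamma_\delta)\geq\) a positive constant \(c\) for all large \(n\), obtained by iterating \(\delta_2\) downward from \(\phi_n^{-1}(\delta)\) as in Lemma~\ref{lm:normalizing-constants}; the additive errors \(D_mc_n\) are eventually smaller than \(\delta/2\). With this in hand, take \(\eta=\min\{\delta,c\}\). Then \(\phi_n(\lambda t)\leq\eta\) forces \(\lambda t\leq\gamma_\delta\) for all \(n\geq N'\), and the first case applies.

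For the finitely many indices \(n<N'\), each \(\phi_n\) is an Orlicz function that is finite and positive, and it satisfies a \(\delta_2\)-type bound. An additive term \(d_n\) chosen large enough, for instance \(d_n=\eta\), makes the inequality \(\phi_n(\lambda t)\leq\eta\leq H\lambda^r(\phi_n(t)+d_n)\) trivial, and finitely many such terms keep \((d_n)\in\ell^1\). Taking \(H=1\) and the modified sequence \((d_n)\) yields \(\Phi\in\delta^r\).

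Finally, for \(q\geq2\) with \(q>b\), we have \(q>\max\{1,b\}\), so \(\Phi\in\delta^q\), and \eqref{eq:cotype-characterization} gives cotype \(q\). Letting \(q\downarrow\max\{2,b\}\) gives \eqref{eq:cotype-upper-index}. If \(b<2\), take \(q=2\).
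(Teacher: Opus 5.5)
Your final argument is correct and is essentially the paper's proof. Both integrate the differential inequality on \((0,\gamma_\delta]\) and shrink the threshold \(\eta\) below a uniform lower bound for \(\phi_n\) at a point of \((0,\gamma_\delta]\), so that \(\phi_n(\lambda t)\le\eta\) forces \(\lambda t\le\gamma_\delta\); the finitely many initial indices are absorbed with \(d_n=\eta\) and \(H=1\) (take them below \(\max\{N,N'\}\)). The paper obtains that lower bound directly from Lemma~\ref{lm:normalizing-constants} applied to the singleton block \(E=\{n\}\), \(r_n=1\), which gives \(\phi_n(\tau)\ge a\) for \(n\ge n_0\) with \(\tau\le\delta\le\gamma_\delta\), and then sets \(\eta=a/2\), so your re-derivation of the iteration and the abandoned case split are unnecessary.
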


\begin{proof}
Fix \(r>\max\{1,b\}\). Choose \(N\geq n_0\) such that \(\beta_{n,\delta}<r\) for every \(n\geq N\). The definition of \(\beta_{n,\delta}\) gives
\[
t\phi_n'(t)\leq r\phi_n(t)+c_n,
\qquad n\geq N,\quad 0<t\leq\gamma_\delta.
\]
For \(t>0\) and \(\lambda\geq1\) with \(\lambda t\leq\gamma_\delta\), integrating
\[
\left(\frac{\phi_n(u)}{u^r}\right)'
\leq c_nu^{-r-1}
\]
over \([t,\lambda t]\) yields
\begin{equation}\label{eq:index-integrated-growth}
\phi_n(\lambda t)
\leq\lambda^r\phi_n(t)+\frac{c_n}{r}(\lambda^r-1)
\leq\lambda^r\left(\phi_n(t)+\frac{c_n}{r}\right).
\end{equation}

Let \(\tau,n_0,a\) be chosen from the fixed \(\delta_2\)-data as above. Applying Lemma~\ref{lm:normalizing-constants} to the admissible singleton \(E=\{n\}\), \(r_n=1\), gives \(\phi_n(\tau)\geq a\) for every \(n\geq n_0\). Set \(\eta=a/2\). If \(n\geq N\) and \(\phi_n(\lambda t)\leq\eta\), then \(\lambda t<\tau\leq\delta\leq\gamma_\delta\), so~\eqref{eq:index-integrated-growth} applies whenever \(t>0\).

Define the nonnegative summable sequence
\[
d_n^{(r)}=
\begin{cases}
\eta,&n<N,\\
c_n/r,&n\geq N.
\end{cases}
\]
For \(n<N\), the condition \(\phi_n(\lambda t)\leq\eta\) directly implies
\[
\phi_n(\lambda t)\leq\eta
\leq\lambda^r\bigl(\phi_n(t)+d_n^{(r)}\bigr).
\]
For \(n\geq N\), the same inequality follows from~\eqref{eq:index-integrated-growth}; the case \(t=0\) is immediate. Thus~\eqref{eq:delta-power-cotype} holds with \(H=1\), threshold \(\eta\), and errors \((d_n^{(r)})\), proving \(\Phi\in\delta^r\).

For every \(q\geq2\) with \(q>b\), we may take \(r=q\) and apply~\eqref{eq:cotype-characterization}. Taking the infimum over these \(q\) proves~\eqref{eq:cotype-upper-index}. If \(b<2\), the choice \(r=2\) gives cotype \(2\) itself.
\end{proof}

When \(b_{\Phi,(c_n),\delta}=2\), Corollary~\ref{cor:cotype-upper-index} gives \(\operatorname{cot}(\ell_\Phi)=2\), but does not assert cotype \(2\). More generally, when \(b_{\Phi,(c_n),\delta}\geq2\), cotype at that endpoint requires a separate verification of \(\Phi\in\delta^{b_{\Phi,(c_n),\delta}}\).
\par

	\begin{example}\label{ex:nonuniform-growth}
		We construct a normalized differentiable Musielak--Orlicz function \(\Phi=\{\phi_n\}\) to which Theorem~\ref{th:upper-index} applies, although the uniform upper growth assumption imposed in \cite{Peiratz92} fails.
		
		For \(n\geq1\), put \(\varepsilon_n=2^{-n}\), \(p_n=n+1\), \(a_n=\varepsilon_n(1-1/p_n)\), and \(d_n=1-a_n\). Define
		\[
		\phi_n(t)=
		\begin{cases}
			\displaystyle \frac{\varepsilon_n}{p_nd_n}
			\left(\frac{t}{\varepsilon_n}\right)^{p_n},
			&0\leq t\leq\varepsilon_n,\\[6pt]
			\displaystyle \frac{t-a_n}{d_n},
			&t\geq\varepsilon_n.
		\end{cases}
		\]
		
		Then \(\Phi\) satisfies the \(\delta_2\) condition with \(K=2\), \(\delta=1\), and \(c_n=a_n/d_n\). For these data, \(b_{\Phi,(c_n),1}=1\), whereas \(\sup_n\sup_{0<t\leq1}\frac{t\phi_n'(t)}{\phi_n(t)}=\infty.\) Moreover, \(\ell_\Phi\cong\ell^1\), and hence, for \({1\leq s<\infty}\), \(\ell^s\subiso\ell_\Phi\Longleftrightarrow s=1.\)
	\end{example}
	
	\begin{proof}
		At \(t=\varepsilon_n\), the two defining expressions have the same value \(\varepsilon_n/(p_nd_n)\) and the same derivative \(1/d_n\). The derivative is increasing on \((0,\varepsilon_n)\) and constant on \((\varepsilon_n,\infty)\). Thus each \(\phi_n\) is convex and continuously differentiable, with \(\phi_n(0)=0\), \(\phi_n(t)>0\) for \(t>0\), and \(\phi_n(1)=1\).
		
		For \(0<t\leq\varepsilon_n\), we have \(t\phi_n'(t)/\phi_n(t)=p_n\). Since \(p_n\to\infty\), the uniform upper growth assumption of \cite{Peiratz92} is not satisfied. In addition, \(\phi_n(2t)/\phi_n(t)=2^{p_n}\) whenever \(0<t\leq\varepsilon_n/2\), so no uniform doubling estimate without additive terms holds for this sequence.
		
		On the other hand, \(0<a_n<\varepsilon_n\leq1/2\), and therefore \(0<c_n=a_n/d_n\leq2\varepsilon_n\). In particular, \((c_n)\in\ell^1\). Direct calculation gives
		\[
		t\phi_n'(t)-\phi_n(t)=
		\begin{cases}
			(p_n-1)\phi_n(t),&0<t\leq\varepsilon_n,\\
			c_n,&t\geq\varepsilon_n.
		\end{cases}
		\]
		
		For \(0<t\leq\varepsilon_n\), the first expression is at most \((p_n-1)\phi_n(\varepsilon_n)=c_n\) and consequently,
		\[
		\left(\frac{\phi_n(t)+c_n}{t}\right)' =\frac{t\phi_n'(t)-\phi_n(t)-c_n}{t^2}\leq0.
		\]
		
		Comparing the values at \(t\) and \(2t\), we obtain \(\phi_n(2t)\leq2\phi_n(t)+c_n\) for every \(t>0\). Thus the \(\delta_2\) condition holds with \(K=2\), \(\delta=1\), and the specified sequence \((c_n)\).
		
		Since \(\phi_n^{-1}(1)=1\), we have \(\gamma_1=1\). Furthermore, \(\frac{t\phi_n'(t)-c_n}{\phi_n(t)}\leq1\) for every \(t>0\), with equality whenever \(t\geq\varepsilon_n\). Hence \(\beta_{n,1}=1\) for every \(n\), and \(b_{\Phi,(c_n),1}=1\). Theorem~\ref{th:upper-index} now shows that an embedding \(\ell^s\subiso\ell_\Phi\), with \(1\leq s<\infty\), requires \(s=1\).
		
		To verify that this bound is attained, we identify \(\ell_\Phi\) with \(\ell^1\) up to equivalent norms. Convexity and normalization give \(\phi_n(t)\leq t\) for \(0\leq t\leq1\). The monotonicity established above also yields \(\phi_n(t)+c_n\geq t(1+c_n)\) for \(0<t\leq1\). Therefore, \(0\leq t-\phi_n(t)\leq c_n, 0\leq t\leq1.\) Write \(C=\sum_n c_n\). If \(x\in\ell^1\setminus\{0\}\), then \(\sum_n\phi_n(|x_n|/\|x\|_1)\leq1\), so \(\|x\|_\Phi\leq\|x\|_1\). Conversely, let \(x\in\ell_\Phi\) and \(\lambda>\|x\|_\Phi\). Then \(\sum_n\phi_n(|x_n|/\lambda)\leq1\), and normalization implies \(|x_n|/\lambda\leq1\) for every \(n\). Thus
		\[
		\frac1\lambda\sum_n|x_n| \leq\sum_n\phi_n\left(\frac{|x_n|}{\lambda}\right)+\sum_n c_n \leq1+C.
		\]
		
		Letting \(\lambda\downarrow\|x\|_\Phi\), we obtain \(\|x\|_1\leq(1+C)\|x\|_\Phi\). Hence the identity map is an isomorphism between \(\ell_\Phi\) and \(\ell^1\). Moreover, the estimate \(0\leq t-\phi_n(t)\leq c_n\) for \(0\leq t\leq1\) implies \(\sup_{0\leq t\leq1}|\phi_n(t)-t|\leq c_n\). Since \(\sum_n c_n<\infty\) and \(\Phi\in\delta_2\), Lemma~\ref{lm:power-singleton-family} applies with \(p=1\) and yields \(\mathcal C_{\Phi,\tau}=\{t\mapsto t\}\).
	\end{proof}

	\begin{example}\label{ex:lacunary-weighted}
		Let
		\[
		M(t)=
		\begin{cases}
			t^2,&0\leq t\leq1,\\
			2t-1,&t\geq1,
		\end{cases}
		\qquad
		w_n=2^{-2^n},\quad n\geq1.
		\]
		Then \(M\) is a continuously differentiable convex Orlicz function and \(\sum_n w_n<\infty\), but \(\lim_{n\to\infty}\frac{\sum_{k>n}w_k}{w_n}=0.\) For the normalized generating sequence \(\widehat\Phi^w\), the \(\delta_2\) condition holds with \(K=2\), \(\delta=1\), and \(c_n=w_n\). With these data, \(b_{\widehat\Phi^w,(w_n),1}=1\). Moreover, \(\ell_M(w)\cong\ell^1\), and, for \(1\leq s<\infty\), \(\ell^s\subiso\ell_M(w)\Longleftrightarrow s=1.\)
	\end{example}
	
	\begin{proof}
		Since \(w_{n+j}=w_n^{2^j}\), we have
		\[
		0\leq\frac{\sum_{k>n}w_k}{w_n} =\sum_{j=1}^{\infty}w_n^{2^j-1} \leq\sum_{j=1}^{\infty}w_n^j =\frac{w_n}{1-w_n}\longrightarrow0.
		\]
		
		The summability of \((w_n)\) follows from \(w_n\leq2^{-n}\).
		
		Set \(s_n=M^{-1}(1/w_n)=(1+w_n)/(2w_n)\) and \(u_n=1/s_n=2w_n/(1+w_n)\). The normalized functions are
		\[
		\widehat\phi_n^w(t)=w_nM(s_nt)=
		\begin{cases}
			\displaystyle\frac{(1+w_n)^2}{4w_n}t^2,&0\leq t\leq u_n,\\[6pt]
			(1+w_n)t-w_n,&t\geq u_n.
		\end{cases}
		\]
		
		They are continuously differentiable and satisfy \(\widehat\phi_n^w(1)=1\). Direct calculation gives
		\[
		t(\widehat\phi_n^w)'(t)-\widehat\phi_n^w(t)=
		\begin{cases}
			\widehat\phi_n^w(t),&0<t\leq u_n,\\
			w_n,&t\geq u_n.
		\end{cases}
		\]
		
		Since \(\widehat\phi_n^w(u_n)=w_n\), this expression is at most \(w_n\) for every \(t>0\). Hence \(\left(\frac{\widehat\phi_n^w(t)+w_n}{t}\right)'\leq0,\) and comparison at \(t\) and \(2t\) yields \(\widehat\phi_n^w(2t)\leq2\widehat\phi_n^w(t)+w_n\). Thus the specified \(\delta_2\)-data are valid.
		
		For \(\delta=1\), normalization gives \(\gamma_1=1\). Furthermore, \(\frac{t(\widehat\phi_n^w)'(t)-w_n}{\widehat\phi_n^w(t)}\leq1,\) with equality for \(u_n\leq t\leq1\). Therefore \(\beta_{n,1}=1\) for every \(n\), and \(b_{\widehat\Phi^w,(w_n),1}=1\). The diagonal isometry between \(\ell_M(w)\) and \(\ell_{\widehat\Phi^w}\), together with Theorem~\ref{th:upper-index}, shows that \(\ell^s\subiso\ell_M(w)\) requires \(s=1\).
		
		It remains to verify that this bound is attained. Convexity and normalization imply \(\widehat\phi_n^w(t)\leq t\) for \(0\leq t\leq1\). The monotonicity of \((\widehat\phi_n^w(t)+w_n)/t\) also gives \(\widehat\phi_n^w(t)+w_n\geq t(1+w_n)\) on \((0,1]\). Consequently, \(0\leq t-\widehat\phi_n^w(t)\leq w_n, 0\leq t\leq1.\) Lemma~\ref{lm:summable-uniform-perturbations} yields \(h_{\widehat\Phi^w}\cong\ell^1\). Since \(\widehat\Phi^w\in\delta_2\), we have \(\ell_{\widehat\Phi^w}=h_{\widehat\Phi^w}\), and hence \(\ell_M(w)\cong\ell^1\). Lemma~\ref{lm:power-singleton-family} also give \({\mathcal C_{\widehat\Phi^w,\tau}=\{t\mapsto t\}}\).
	\end{proof}
	
\begin{remark}[Strict inclusion of the admissible family]\label{rem:strict-inclusion}
For the normalized generating sequence in Example~\ref{ex:lacunary-weighted}, the inclusion in Lemma~\ref{lm:admissible-inclusion} is strict:
\[
\mathcal C_{\widehat\Phi^w,\tau}
=\{t\mapsto t\}
\subsetneq C_{\widehat\Phi^w,1}.
\]
Indeed, \(0<u_j<1\), and for every \(0<s\leq u_j\), both \(s\) and \(st\) lie in the quadratic part of \(\widehat\phi_j^w\) whenever \(0\leq t\leq1\). Therefore,
\[
\frac{\widehat\phi_j^w(st)}{\widehat\phi_j^w(s)}=t^2,
\qquad 0\leq t\leq1.
\]
Thus \(t\mapsto t^2\) belongs to \(F_{\widehat\phi_j^w,1}\) for every \(j\). A convex combination consisting of this single function belongs to each tail family \(\Gamma((F_{\widehat\phi_j^w,1})_{j=n}^{\infty})\). Taking closures and intersecting over \(n\) gives \(t\mapsto t^2\in C_{\widehat\Phi^w,1}\). Since Example~\ref{ex:lacunary-weighted} gives \(\mathcal C_{\widehat\Phi^w,\tau}=\{t\mapsto t\}\), the two families cannot coincide.

In particular, the classical family contains \(t\mapsto t^2\), although \(\ell^2\) does not embed into \(\ell_M(w)\cong\ell^1\). The admissibility requirement excludes this function from \(\mathcal C_{\widehat\Phi^w,\tau}\). The universal-weight characterization in Theorem~\ref{th:weighted-Orlicz-characterization} does not apply to this example, since the weights are summable and hence \(w\notin\mathcal A\).
\end{remark}

	\section{Applications to Nakano Sequence Spaces}
	\label{sec:nakano}

\subsection{The admissible family for alternating exponents}
	
	Consider the Nakano sequence space with two alternating exponents introduced in Section~\ref{sec:known-nakano}. Proposition~\ref{prop:nakano-two-powers} gives its known embedding characterization. We now compute the associated admissible limiting family.
	
	\begin{example}\label{ex:nakano-limiting-family}
		Let \(1\leq p<q<\infty\), and let \(\Phi=\{\phi_n\}\), where \(\phi_n(t)=t^{p_n}\), \(p_n=p\) for odd \(n\), and \(p_n=q\) for even \(n\). Then
		\begin{equation}\label{eq:nakano-limiting-family}
			\mathcal C_{\Phi,\tau}
			=\left\{\alpha t^p+(1-\alpha)t^q:0\leq\alpha\leq1\right\},
		\end{equation}
		where the functions are considered on \([0,1]\).
	\end{example}
	
	\begin{proof}
		Fix \(n\geq n_0\), and let \(g\in\mathcal B_{\Phi,\tau,n}\). By definition, \(g(t)= \frac{\sum_{j\in E}\phi_j(\tau r_jt)} {\sum_{j\in E}\phi_j(\tau r_j)},\) where \(E\) is a nonempty finite subset of \(\{n,n+1,\ldots\}\), \(r_j>0\), and \(\sum_{j\in E}\phi_j(r_j)=1\). Put
		\[
		A_p=\tau^p\sum_{\substack{j\in E\\p_j=p}}r_j^p, \qquad A_q=\tau^q\sum_{\substack{j\in E\\p_j=q}}r_j^q.
		\]
		Then \(A_p+A_q>0\), and \(g(t)=\frac{A_p}{A_p+A_q}t^p +\frac{A_q}{A_p+A_q}t^q.\) Thus every element of \(\mathcal B_{\Phi,\tau,n}\) has the form \(g_\alpha(t)=\alpha t^p+(1-\alpha)t^q\) for some \(\alpha\in[0,1]\).
		
		Conversely, every \(\alpha\in[0,1]\) can be realized by an admissible block in the same tail. The endpoint \(\alpha=1\) is obtained by taking \(E=\{j_p\}\), where \(j_p\geq n\) is odd, and \(r_{j_p}=1\). Similarly, \(\alpha=0\) is obtained by taking \(E=\{j_q\}\), where \(j_q\geq n\) is even, and \(r_{j_q}=1\).
		
		For \(0<\alpha<1\), choose an odd index \(j_p\geq n\) and an even index \(j_q\geq n\), and set
		\[
		u=\frac{\alpha\tau^q} {(1-\alpha)\tau^p+\alpha\tau^q}, \qquad r_{j_p}=u^{1/p}, \qquad r_{j_q}=(1-u)^{1/q}.
		\]
		
		It follows that \(0<u<1\) and \(\phi_{j_p}(r_{j_p})+\phi_{j_q}(r_{j_q})=1\). The coefficient of \(t^p\) in the resulting normalized function is \(\frac{\tau^pu}{\tau^pu+\tau^q(1-u)}=\alpha.\) Consequently, for every \(n\geq n_0\), \({\mathcal B_{\Phi,\tau,n} =\left\{\alpha t^p+(1-\alpha)t^q:0\leq\alpha\leq1\right\}}.\) The set on the right is compact in \(C[0,1]\), being the continuous image of \([0,1]\) under \(\alpha\mapsto g_\alpha\). Taking closures and intersecting over the tails proves \eqref{eq:nakano-limiting-family}.
	\end{proof}
	
	\begin{remark}
		For this Nakano space, the \(\delta_2\)-data may be chosen as \(K=2^q\), \(\delta=1\), and \(c_n=0\). Since \(\frac{t\phi_n'(t)}{\phi_n(t)} = p_n,\) we have $\beta_{n,1}=p_n$ and $b_{\Phi,(c_n),1} = \limsup_{n\to\infty}p_n =q.$ Thus Theorem~\ref{th:upper-index} gives the necessary estimate \(s\leq q\) whenever \(\ell^s\subiso \ell^{(p_n)}\). The explicit description of \(\mathcal C_{\Phi,\tau}\) yields the sharper conclusion that only \(s=p\) and \(s=q\) are possible.
	\end{remark}
	
\subsection{The exact scalar cotype criterion}\label{sec:nakano-cotype}

We now consider an arbitrary sequence \((p_n)_{n=1}^{\infty}\) with \(1\leq p_n<\infty\), and write \(\ell^{(p_n)}=\ell_\Phi\), where \(\Phi=\{\phi_n\}_{n=1}^{\infty}\) and \(\phi_n(t)=t^{p_n}\). No boundedness of \((p_n)\) is assumed in this subsection. The next proposition recovers the scalar cotype criterion in \cite[Theorem~4.3]{BlascoGregori01}. We derive it from~\eqref{eq:cotype-characterization} by computing the least additive errors; the reduction to these errors is also recorded in \cite[Lemma~4.1(i)]{BlascoGregori01}.

\begin{proposition}[Blasco--Gregori]\label{prop:nakano-cotype-exact}
Let \(2\leq q<\infty\). The following conditions are equivalent:
\begin{enumerate}[label=(\roman*)]
\item The space \(\ell^{(p_n)}\) has cotype \(q\).
\item There exist \(C\geq1\) and a nonnegative sequence \((a_n)\in\ell^1\) such that
\begin{equation}\label{eq:nakano-cotype-errors}
t^q\leq Ct^{p_n}+a_n
\qquad(0\leq t\leq1,\ n\in\mathbb N).
\end{equation}
\item There exists \(\vartheta\in(0,1)\) such that
\begin{equation}\label{eq:nakano-cotype-series}
\sum_{\{n:\,p_n>q\}}\vartheta^{1/(p_n-q)}<\infty.
\end{equation}
\end{enumerate}
For fixed \(C\geq1\), the least possible error in~\eqref{eq:nakano-cotype-errors} is
\begin{equation}\label{eq:nakano-minimal-error}
d_n(q,C)=\max_{0\leq t\leq1}(t^q-Ct^{p_n})
=\begin{cases}
0,&p_n\leq q,\\[4pt]
\displaystyle
\left(1-\frac{q}{p_n}\right)
\left(\frac{q}{Cp_n}\right)^{q/(p_n-q)},&p_n>q.
\end{cases}
\end{equation}
Thus cotype \(q\) is also equivalent to \(\sum_n d_n(q,C)<\infty\) for some \(C\geq1\).
\end{proposition}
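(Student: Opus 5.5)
The plan is to route everything through the characterization \eqref{eq:cotype-characterization}: for \(2\leq q<\infty\), \(\ell^{(p_n)}\) has cotype \(q\) if and only if \(\Phi\in\delta^q\). Then (i)\(\Leftrightarrow\)(ii) is a reformulation of \(\delta^q\) for pure powers, and (ii)\(\Leftrightarrow\)(iii) follows from the explicit minimal error \eqref{eq:nakano-minimal-error}.

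First I compute \(d_n(q,C)\). If \(p_n\leq q\), then \(t^q\leq t^{p_n}\leq Ct^{p_n}\) on \([0,1]\), so the maximum is \(0\), attained at \(t=0\). If \(p_n>q\), set \(h(t)=t^q-Ct^{p_n}\). Its only interior critical point is \(t_*=(q/(Cp_n))^{1/(p_n-q)}\in(0,1)\). Substituting \(Ct_*^{p_n}=(q/p_n)t_*^q\) gives \(h(t_*)=(1-q/p_n)t_*^q\), which is the displayed formula. By definition, \eqref{eq:nakano-cotype-errors} holds with errors \((a_n)\) if and only if \(a_n\geq d_n(q,C)\). So (ii) is equivalent to \(\sum_n d_n(q,C)<\infty\) for some \(C\geq1\).

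For (i)\(\Rightarrow\)(ii), take \(\delta^q\)-data \(H,\eta,(d_n)\). For \(0<s\leq1\), apply \eqref{eq:delta-power-cotype} with \(t=1\) and \(\lambda=1/s\), after rescaling so the threshold is met. Concretely, fix \(t_0=\eta^{1/p_n}\leq\eta\), which holds since \(\eta\leq1\) may be assumed. Take \(\lambda t=t_0\) and \(t=st_0\). This gives \(t_0^{p_n}\leq Hs^{-q}(s^{p_n}t_0^{p_n}+d_n)\), that is,
\[
s^q\leq Hs^{p_n}+Hd_n\eta^{-1}.
\]
This is (ii) with \(C=H\) and \(a_n=Hd_n/\eta\).

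For (ii)\(\Rightarrow\)(i), suppose \(\phi_n(\lambda t)=(\lambda t)^{p_n}\leq\eta:=1\) with \(\lambda\geq1\). Put \(u=\lambda t\leq1\) and \(s=1/\lambda\). I need
\[
u^{p_n}\leq H\lambda^q\bigl((su)^{p_n}+d_n\bigr).
\]
Write \(v=u^{p_n}\). I apply (ii) at the point \(s\,v^{-?}\)... The cleaner route is homogeneity. The desired inequality is equivalent to \(s^q\leq Hs^{p_n}+Hd_n/v\), and since \(v\leq1\), it suffices to have \(s^q\leq Cs^{p_n}+a_n\). That is exactly (ii), with \(H=C\) and \(d_n=a_n\). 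The case \(t=0\) is trivial.

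The main obstacle is (ii)\(\Leftrightarrow\)(iii), since the minimal error is \(d_n\approx(1-q/p_n)(q/(Cp_n))^{q/(p_n-q)}\).

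For (ii)\(\Rightarrow\)(iii), I bound \(d_n\) below. When \(p_n\leq2q\), \((q/(Cp_n))^{q/(p_n-q)}\geq(1/(2C))^{q/(p_n-q)}\). When \(p_n>2q\), I use the factor \(1-q/p_n\geq1/2\), and \((q/(Cp_n))^{q/(p_n-q)}\) stays bounded below by a positive constant because \(p_n^{-q/(p_n-q)}\) is bounded away from \(0\). Summability then forces only finitely many \(n\) with \(p_n>2q\). For the remaining terms, \(1-q/p_n=(p_n-q)/p_n\) can be small. So with \(\vartheta=(2C)^{-q}\) I need to handle the factor \((p_n-q)\). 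Since \(x\,\vartheta^{1/x}\geq c\,\vartheta'^{1/x}\) for \(x\in(0,q]\) and any \(\vartheta'<\vartheta\), I pass to a smaller \(\vartheta'\), which absorbs the polynomial factor.

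For (iii)\(\Rightarrow\)(ii), I reverse these estimates. Given \(\vartheta\), choose \(C\) large so that \((q/(Cp_n))^{q}\leq\vartheta\) whenever \(p_n\geq q\) and \(p_n\) is bounded. Boundedness of \(p_n\) off a finite set is automatic, since \(\vartheta^{1/(p_n-q)}\to1\) if \(p_n\to\infty\) along a subsequence, contradicting \eqref{eq:nakano-cotype-series}. Then \(d_n\leq\vartheta^{1/(p_n-q)}\), and finitely many exceptional \(n\) are harmless.
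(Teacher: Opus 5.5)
Your proof is correct and follows the paper's route. You reduce to $\Phi\in\delta^q$ via \eqref{eq:cotype-characterization}, use the same substitution $t=s\eta^{1/p_n}$, $\lambda=1/s$ for (i)$\Rightarrow$(ii), argue the converse with threshold $1$, and compute the same minimal error $d_n(q,C)$. Your aside that $\eta^{1/p_n}\le\eta$ is backwards for $\eta<1$, but it is never used; only $\phi_n(\eta^{1/p_n})=\eta$ matters.

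The one real difference is in (ii)$\Leftrightarrow$(iii). The paper writes $x=q/(p_n-q)$, so that $d_n(q,C)=(1+x)^{-1}(1+1/x)^{-x}C^{-x}$. It then uses $(1+x)^{-1}\ge e^{-x}$ and $(1+1/x)^{-x}\ge e^{-1}$ to get $e^{-1}(eC)^{-q/(p_n-q)}\le d_n(q,C)\le C^{-q/(p_n-q)}$ for all $p_n>q$. This uniform two-sided bound replaces your case split at $p_n=2q$ and the shrinking of $\vartheta$. It also shows that $C=\vartheta^{-1/q}$ works in (iii)$\Rightarrow$(ii) with no boundedness of $(p_n)$ needed, since $q/p_n\le1$.
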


\begin{proof}
By~\eqref{eq:cotype-characterization}, condition~(i) is equivalent to \(\Phi\in\delta^q\). Suppose that~\eqref{eq:delta-power-cotype} holds with constants \(H\), \(\eta\), and errors \((e_n)\in\ell^1\). For \(0<s\leq1\), take \(v_n=\eta^{1/p_n}\), \(t=sv_n\), and \(\lambda=1/s\). Since \(\phi_n(\lambda t)=\eta\), we obtain
\[
\eta s^q\leq H\eta s^{p_n}+He_n.
\]
Hence~(ii) holds with \(C=H\) and \(a_n=He_n/\eta\).

Conversely, suppose~(ii) holds. For \(0<u\leq v\) with \(v^{p_n}\leq1\), apply~\eqref{eq:nakano-cotype-errors} at \(u/v\) and multiply by \(v^{p_n}\). This gives
\[
\left(\frac uv\right)^qv^{p_n}
\leq Cu^{p_n}+a_nv^{p_n}
\leq Cu^{p_n}+a_n.
\]
Taking \(u=t\) and \(v=\lambda t\) proves~\eqref{eq:delta-power-cotype} with \(H=C\), \(\eta=1\), and errors \(a_n/C\); the case \(t=0\) is immediate. Thus~(i) and~(ii) are equivalent.

For fixed \(C\), condition~(ii) holds exactly when the maxima \(d_n(q,C)\) form a summable sequence. If \(p_n\leq q\), then \(t^q\leq t^{p_n}\) on \([0,1]\), and the maximum is zero. If \(p_n>q\), differentiating \(t^q-Ct^{p_n}\) shows that its maximum occurs at
\[
t_n=\left(\frac{q}{Cp_n}\right)^{1/(p_n-q)}\in(0,1),
\]
which proves~\eqref{eq:nakano-minimal-error}.

To compare these errors with~\eqref{eq:nakano-cotype-series}, put \(a=p_n-q>0\) and \(x=q/a\). Then
\[
d_n(q,C)=\frac{1}{1+x}\left(1+\frac1x\right)^{-x}C^{-x}.
\]
The inequalities \((1+x)^{-1}\geq e^{-x}\) and \((1+1/x)^{-x}\geq e^{-1}\), together with the upper bound of one for both factors, give
\begin{equation}\label{eq:nakano-error-bounds}
e^{-1}(eC)^{-q/(p_n-q)}
\leq d_n(q,C)
\leq C^{-q/(p_n-q)}
\qquad(p_n>q).
\end{equation}
If the errors are summable, the left inequality yields~(iii) with \(\vartheta=(eC)^{-q}\). Conversely, given~(iii), choose \(C=\vartheta^{-1/q}>1\); the right inequality makes the errors summable. This proves all the equivalences.
\end{proof}

The exact criterion gives both the optimal exponent and a separate test for its attainment.

\begin{corollary}\label{cor:nakano-optimal-cotype}
Let \(L=\limsup_{n\to\infty}p_n\), with \(L=\infty\) allowed. Then
\begin{equation}\label{eq:nakano-optimal-cotype}
\operatorname{cot}(\ell^{(p_n)})=\max\{2,L\}.
\end{equation}
If \(L<2\), the space has cotype \(2\). If \(2\leq L<\infty\), the optimal exponent is attained if and only if
\begin{equation}\label{eq:nakano-cotype-endpoint}
\sum_{\{n:\,p_n>L\}}\vartheta^{1/(p_n-L)}<\infty
\quad\text{for some }\vartheta\in(0,1).
\end{equation}
If \(L=\infty\), the space has no finite cotype.
\end{corollary}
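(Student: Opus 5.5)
The plan is to derive everything from Proposition~\ref{prop:nakano-cotype-exact}, using the series criterion~\eqref{eq:nakano-cotype-series} for each fixed \(q\geq2\). Cotype is monotone in \(q\): cotype \(q\) implies cotype \(q'\) for every \(q'>q\). Hence \(\operatorname{cot}(\ell^{(p_n)})\) is determined by identifying which \(q\geq2\) satisfy~\eqref{eq:nakano-cotype-series}. I will show separately that every \(q>\max\{2,L\}\) works, that no \(q<L\) works, and then treat the endpoint \(q=\max\{2,L\}\).

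\emph{Upper bound.} Let \(q\geq2\) with \(q>L\) (when \(L<\infty\)). Since \(\limsup p_n=L<q\), the set \(\{n:p_n>q\}\) is finite. The series in~\eqref{eq:nakano-cotype-series} is then a finite sum, so it converges for any \(\vartheta\in(0,1)\), and \(\ell^{(p_n)}\) has cotype \(q\).
\begin{itemize}
\item If \(L<2\), take \(q=2\); only finitely many \(p_n\) exceed \(2\), so the space has cotype \(2\).
\item If \(2\leq L<\infty\), this gives \(\operatorname{cot}\leq L\).
\end{itemize}
Combined with the lower bound below, this proves~\eqref{eq:nakano-optimal-cotype} in both cases.

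\emph{Lower bound.} Fix \(q\) with \(2\leq q<L\); here \(L=\infty\) is allowed. Choose \(q<s<L\). Then \(p_n>s\) for infinitely many \(n\). For those \(n\) we have \(p_n-q>s-q>0\). If in addition \(p_n\) stays bounded along them, for example \(p_n\leq L+1\) when \(L<\infty\), then \(1/(p_n-q)\) lies in a compact subinterval of \((0,\infty)\). So \(\vartheta^{1/(p_n-q)}\) is bounded below by a positive constant along infinitely many indices, and the series diverges for every \(\vartheta\in(0,1)\).

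The main obstacle is the case \(L=\infty\), or more generally indices with very large \(p_n\). There the exponent \(1/(p_n-q)\to0\), so \(\vartheta^{1/(p_n-q)}\to1\). The terms still do not tend to zero, so the series diverges as long as infinitely many \(p_n>q\). In every subcase, therefore, infinitely many \(p_n\) exceed \(q\), and the general terms are bounded below by \(\min\{\vartheta^{1/(s-q)},\,\vartheta^{0}\}\) restricted to a positive value. More precisely, \(\vartheta^{1/(p_n-q)}\geq\vartheta^{1/(s-q)}>0\) whenever \(p_n>s\). This is a uniform lower bound, so the series diverges. Consequently no \(q<L\) is a cotype exponent. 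This gives \(\operatorname{cot}\geq L\), and when \(L=\infty\) there is no finite cotype.

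\emph{Endpoint.} If \(2\leq L<\infty\), the optimal exponent is attained exactly when the space has cotype \(L\). By Proposition~\ref{prop:nakano-cotype-exact} with \(q=L\), this happens if and only if~\eqref{eq:nakano-cotype-endpoint} holds.
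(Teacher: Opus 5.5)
Your proof is correct and essentially the paper's argument. You use the finiteness of \(\{n:p_n>q\}\) for \(q>L\), the uniform bound \(\vartheta^{1/(p_n-q)}\geq\vartheta^{1/(s-q)}\) along infinitely many indices for \(2\leq q<L\) (the paper's \(\varepsilon=s-q\)), and Proposition~\ref{prop:nakano-cotype-exact} at \(q=L\) for the endpoint; the detour about bounded \(p_n\) and the ``obstacle'' at \(L=\infty\) is unnecessary, because the monotonicity of \(x\mapsto\vartheta^x\) already gives that uniform bound in every case.
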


\begin{proof}
If \(q\geq2\) and \(q>L\), there are only finitely many indices with \(p_n>q\), so~\eqref{eq:nakano-cotype-series} holds. If \(2\leq q<L\), there is \(\varepsilon>0\) such that \(p_n\geq q+\varepsilon\) for infinitely many \(n\). For every \(\vartheta\in(0,1)\), the corresponding terms satisfy
\[
\vartheta^{1/(p_n-q)}\geq\vartheta^{1/\varepsilon}>0,
\]
so the series diverges. These observations prove~\eqref{eq:nakano-optimal-cotype}, including the case \(L=\infty\). When \(L<2\), the first observation applies to \(q=2\). When \(2\leq L<\infty\), Proposition~\ref{prop:nakano-cotype-exact} applied at \(q=L\) gives~\eqref{eq:nakano-cotype-endpoint}.
\end{proof}

For the alternating exponents in Example~\ref{ex:nakano-limiting-family}, this gives optimal cotype \(\max\{2,q\}\), which is attained because no exponent exceeds that value. More generally, when \(L<\infty\), one may choose \(c_n=0\), \(\delta=1\), and \(K=2^{\sup_n p_n}\) in~\eqref{eq:delta2}. The corrected upper index then equals \(b_{\Phi,(0),1}=L\). Corollary~\ref{cor:cotype-upper-index} gives cotype \(q\) for every \(q\geq2\) with \(q>L\); when \(L\geq2\),~\eqref{eq:nakano-cotype-endpoint} also decides the boundary case.

\begin{example}[Attainment of the optimal cotype]\label{ex:nakano-cotype-endpoint}
Fix \(q\geq2\) and consider the two sequences
\[
p_n^{(1)}=q+\frac{1}{\log(n+1)},
\qquad
p_n^{(2)}=q+\frac{1}{\sqrt{\log(n+1)}}.
\]
These examples are discussed in \cite[pp.~658--659]{BlascoGregori01}. Both spaces have optimal cotype \(q\), since both exponent sequences converge to \(q\). For the first sequence, the endpoint series is
\[
\sum_{n=1}^{\infty}\vartheta^{1/(p_n^{(1)}-q)}
=\sum_{n=1}^{\infty}(n+1)^{\log\vartheta},
\]
which converges for \(0<\vartheta<e^{-1}\). Thus \(\ell^{(p_n^{(1)})}\) has cotype \(q\). For the second sequence, writing \(c=-\log\vartheta>0\), the summands are
\[
\vartheta^{1/(p_n^{(2)}-q)}
=\exp\bigl(-c\sqrt{\log(n+1)}\bigr)
\geq\frac{1}{n+1}
\]
for all sufficiently large \(n\). The series diverges for every \(\vartheta\in(0,1)\). Hence \(\ell^{(p_n^{(2)})}\) has cotype \(q+\varepsilon\) for every \(\varepsilon>0\), but does not have cotype \(q\). The two spaces have the same upper index with zero errors, while their endpoint cotypes differ.
\end{example}
\par

\section{Cotype in Weighted Orlicz Sequence Spaces}\label{sec:weighted-cotype}

Throughout this section, \(M\) is an Orlicz function and \(w=(w_n)\) is a positive weight sequence. The following criterion is a direct reformulation of \cite[Theorem~9(a)]{Katirtzoglou98}, recalled in~\eqref{eq:cotype-characterization}. Cotype of weighted Orlicz sequence spaces was also studied in \cite{MaleevZlatanov00}. We give the details of the reformulation to make the dependence on both \(M\) and the weights explicit.

\subsection{An exact criterion for arbitrary positive weights}

For \(2\leq q<\infty\), \(C\geq1\), and \(\eta>0\), define
\begin{equation}\label{eq:weighted-cotype-defect}
D_n(q,C,\eta)
=w_n\sup_{\substack{0<u\leq v\\w_nM(v)\leq\eta}}
\left[\left(\frac{u}{v}\right)^qM(v)-CM(u)\right]_+,
\end{equation}
where \([a]_+=\max\{a,0\}\). Equivalently, the supremum is taken over \(0<u\leq v\leq M^{-1}(\eta/w_n)\). Each \(D_n(q,C,\eta)\) is finite, since \(0\leq D_n(q,C,\eta)\leq\eta\).

\begin{proposition}\label{prop:weighted-cotype-defect}
Let \(2\leq q<\infty\). Then
\(\ell_M(w)\) has cotype \(q\)
if and only if there exist \(C\geq 1\) and \(\eta>0\) such that
\begin{equation}\label{eq:weighted-cotype-test}
\sum_{n=1}^{\infty}D_n(q,C,\eta)<\infty.
\end{equation}

Consequently,
\begin{equation}\label{eq:weighted-optimal-cotype}
\operatorname{cot}(\ell_M(w))
=\inf\left\{q\geq2:\exists C\geq1,\ \eta>0,
\ \sum_{n=1}^{\infty}D_n(q,C,\eta)<\infty\right\}.
\end{equation}
\end{proposition}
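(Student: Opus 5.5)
The plan is to reduce everything to the characterization~\eqref{eq:cotype-characterization} applied to the unnormalized weighted generators \(\Phi^w=\{w_nM\}\) from~\eqref{eq:weighted-musielak-function}. The space \(\ell_M(w)\) is by definition \(\ell_{\Phi^w}\), so cotype \(q\) is equivalent to \(\Phi^w\in\delta^q\). We then show that \(\Phi^w\in\delta^q\) holds if and only if \eqref{eq:weighted-cotype-test} holds for some \(C\geq1\) and \(\eta>0\). The formula~\eqref{eq:weighted-optimal-cotype} then follows immediately from the definition of \(\operatorname{cot}\).

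First we rewrite the \(\delta^q\) condition in the variables \(u=t\) and \(v=\lambda t\). Then \(\lambda\geq1\) means \(0<u\leq v\), and \(\lambda^q=(v/u)^q\). Multiplying \eqref{eq:delta-power-cotype} by \((u/v)^q\), it reads
\[
w_nM(v)\leq\eta\ \Longrightarrow\ w_n\Bigl(\frac uv\Bigr)^qM(v)\leq H\,w_nM(u)+H\,d_n .
\]
The case \(t=0\) is trivial.

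\emph{Necessity.} Assume \(\Phi^w\in\delta^q\) with data \(H,\eta,(d_n)\). Take \(C=H\) and the same \(\eta\). The displayed inequality gives, for every admissible pair \(u\leq v\),
\[
w_n\Bigl[\Bigl(\frac uv\Bigr)^qM(v)-CM(u)\Bigr]_+\leq Hd_n .
\]
Taking the supremum yields \(D_n(q,C,\eta)\leq Hd_n\), which is summable.

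\emph{Sufficiency.} Assume \(\sum_nD_n(q,C,\eta)<\infty\). For every admissible pair we have, by the definition of \(D_n\),
\[
w_n\Bigl(\frac uv\Bigr)^qM(v)\leq Cw_nM(u)+D_n(q,C,\eta).
\]
Multiplying back by \((v/u)^q=\lambda^q\geq1\) gives
\[
\phi^w_n(\lambda t)\leq C\lambda^q\phi^w_n(t)+\lambda^qD_n\leq C\lambda^q\bigl(\phi_n^w(t)+D_n\bigr),
\]
using \(C\geq1\). This is exactly \(\delta^q\) with \(H=C\), threshold \(\eta\), and errors \(d_n=D_n\).

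The remaining points are routine but need care. The bound \(0\leq D_n\leq\eta\) follows from \((u/v)^q\leq1\) and \(w_nM(v)\leq\eta\), so each \(D_n\) is finite. The stated equivalent description of the range, \(v\leq M^{-1}(\eta/w_n)\), uses that \(M\) is strictly increasing and unbounded, as noted in the preliminaries.

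The only subtle point is that Katirtzoglou's characterization applies to the Musielak--Orlicz function \(\Phi^w\) as given, with no normalization required. If a normalized version were needed, we would instead pass through the isometry \(D\) onto \(\ell_{\widehat\Phi^w}\) and substitute \(v=s_nt\). The threshold condition \(\widehat\phi_n^w(\cdot)\leq\eta\) is invariant under this substitution, so the same computation goes through. I expect no real obstacle beyond keeping the direction of the threshold (imposed on the value at \(\lambda t\)) consistent with the supremum over \(v\).
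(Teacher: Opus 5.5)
Your proposal is correct and follows the paper's proof essentially step for step: both apply \eqref{eq:cotype-characterization} to the unnormalized generators \(\{w_nM\}\), substitute \(u=t\), \(v=\lambda t\), and obtain \(D_n(q,H,\eta)\leq Hd_n\) for necessity. The only difference is cosmetic and lies in sufficiency. You take errors \(d_n=D_n(q,C,\eta)\) and use \(C\geq1\) to absorb \(\lambda^qD_n\), whereas the paper takes \(d_n=D_n(q,C,\eta)/C\), so that \(\lambda^qD_n=C\lambda^qd_n\) exactly.
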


\begin{proof}
Use the generators \(\phi_n^w(t)=w_nM(t)\) from~\eqref{eq:weighted-musielak-function}. Suppose first that \(\ell_M(w)\) has cotype \(q\). By~\eqref{eq:cotype-characterization}, \(\Phi^w\in\delta^q\). Choose \(H\geq1\), \(\eta>0\), and a nonnegative sequence \((d_n)\in\ell^1\) satisfying~\eqref{eq:delta-power-cotype}. For \(0<u\leq v\) with \(w_nM(v)\leq\eta\), take \(t=u\) and \(\lambda=v/u\) to obtain
\[
w_n\left[\left(\frac{u}{v}\right)^qM(v)-HM(u)\right]_+
\leq H d_n.
\]
Thus \(D_n(q,H,\eta)\leq H d_n\), proving the required summability.

Conversely, let \(C\) and \(\eta\) satisfy~\eqref{eq:weighted-cotype-test}. Put \(d_n=D_n(q,C,\eta)/C\). If \(t>0\), \(\lambda\geq1\), and \(w_nM(\lambda t)\leq\eta\), the definition of \(D_n\), applied with \(u=t\) and \(v=\lambda t\), gives
\[
\lambda^{-q}w_nM(\lambda t)-Cw_nM(t)
\leq D_n(q,C,\eta).
\]
Therefore
\[
w_nM(\lambda t)
\leq C\lambda^q\bigl(w_nM(t)+d_n\bigr).
\]
The case \(t=0\) is immediate. Hence \(\Phi^w\in\delta^q\), and~\eqref{eq:cotype-characterization} gives cotype \(q\). Formula~\eqref{eq:weighted-optimal-cotype} follows by taking the infimum.
\end{proof}

The same criterion applies to \(\widehat\phi_n^w(t)=w_nM(s_nt)\). The substitutions \(u=s_nx\) and \(v=s_ny\) leave the supremum in~\eqref{eq:weighted-cotype-defect} unchanged. Thus the criterion is consistent with the diagonal isometry used in the embedding results. It includes the endpoint \(q=2\) and requires only positivity of the weights and the defining assumptions on \(M\).
	
	\begin{example}[A prescribed cotype without \(\Delta_2\) at zero]
		\label{ex:weighted-cotype-no-delta2}
		Fix \(p>2\), and put
		\[
		a_n=2^{-2^n},
		\qquad
		w_n=a_n^{-(p+1)}=2^{(p+1)2^n},
		\qquad n\geq1.
		\]
		Define
		\[
		A(t)=
		\begin{cases}
			0,&t=0,\\[2mm]
			(a_n+a_{n+1})t-a_na_{n+1},
			&a_{n+1}\leq t\leq a_n,\quad n\geq1,\\[2mm]
			t^2,&t\geq a_1,
		\end{cases}
		\]
		and let \(M_p(t)=t^{p-1}A(t)\).
		Then \(M_p\) is an Orlicz function that fails the
		\(\Delta_2\) condition at zero, whereas
		\[
		\ell_{M_p}(w)=h_{M_p}(w),
		\qquad
		\operatorname{cot}\bigl(\ell_{M_p}(w)\bigr)=p.
		\]
		The space has cotype \(p\), so the optimal exponent is attained.
	\end{example}
	
	\begin{proof}
		We have \(a_1=1/4\), \(a_{n+1}=a_n^2\), and
		\(\sum_n a_n<\infty\).
		The function \(A\) is continuous and convex:
		its linear pieces join the points \((a_n,a_n^2)\),
		and their slopes \(a_n+a_{n+1}\) increase as the
		argument increases. At \(a_1\), the left derivative
		is at most the right derivative \(2a_1\).
		Since \(t^{p-1}\) and \(A(t)\) are nonnegative,
		nondecreasing, convex functions, their product \(M_p\)
		is convex. Thus \(M_p\) is an Orlicz function.
		
		For every \(n\), we have
		\[
		\frac{M_p(2a_{n+1})}{M_p(a_{n+1})}
		=
		2^{p-1}\frac{A(2a_{n+1})}{A(a_{n+1})}
		=
		2^{p-1}\left(\frac1{a_n}+2\right)
		\longrightarrow\infty.
		\]
		Hence \(M_p\) does not satisfy the \(\Delta_2\)
		condition at zero.
		
		We first estimate the normalized generators.
		Convexity and the supporting line on
		\([a_{n+1},a_n]\) give
		\[
		a_nt-a_n^3\leq A(t)\leq a_nt,
		\qquad 0\leq t\leq a_n.
		\]
		Consequently, the functions
		\[
		\varphi_n(t)=\frac{A(a_nt)}{a_n^2}
		\]
		satisfy
		\begin{equation}\label{eq:prescribed-cotype-linear-error}
			0\leq t-\varphi_n(t)\leq a_n,
			\qquad 0\leq t\leq1.
		\end{equation}
		Since \(M_p(a_n)=a_n^{p+1}=1/w_n\), the normalized
		generators of \(\ell_{M_p}(w)\) are
		\[
		\psi_n(t)=w_nM_p(a_nt)=t^{p-1}\varphi_n(t).
		\]
		It follows that
		\begin{equation}\label{eq:prescribed-cotype-power-error}
			0\leq t^p-\psi_n(t)
			\leq a_nt^{p-1}\leq a_n,
			\qquad 0\leq t\leq1.
		\end{equation}
		
		We next verify the \(\delta_2\) condition.
		By~\eqref{eq:prescribed-cotype-linear-error},
		\[
		\psi_n(1/2)
		\geq 2^{-(p-1)}(1/2-a_n)
		\geq 2^{-(p+1)}.
		\]
		Thus, if \(\psi_n(t)\leq\delta=2^{-(p+1)}\),
		then \(t\leq1/2\).
		For such \(t\), estimate
		\eqref{eq:prescribed-cotype-linear-error} yields
		\[
		\varphi_n(2t)\leq2t
		\leq2\varphi_n(t)+2a_n.
		\]
		Therefore
		\[
			\psi_n(2t)
			=(2t)^{p-1}\varphi_n(2t)
			\leq2^p\psi_n(t)+2^pa_nt^{p-1}
			\leq2^p\psi_n(t)+2a_n.
		\]
		The normalized generators satisfy \(\delta_2\)
		with \(K=2^p\), \(\delta=2^{-(p+1)}\), and
		summable errors \(c_n=2a_n\).
		Replacing \(t\) by \(u/a_n\) gives the same condition
		for the original generators \(\{w_nM_p\}\).
		Consequently,
		\(\ell_{M_p}(w)=h_{M_p}(w)\).
		
		We now compute the cotype using
		Proposition~\ref{prop:weighted-cotype-defect}.
		After the substitutions \(u=a_nx\), \(v=a_ny\),
		the defect takes the form
		\[
		D_n(q,C,\eta)
		=
		\sup_{\substack{0<x\leq y\\\psi_n(y)\leq\eta}}
		\left[
		\left(\frac{x}{y}\right)^q\psi_n(y)-C\psi_n(x)
		\right]_+.
		\]
		For \(q=p\), \(C=1\), and \(\eta=1\),
		normalization gives \(y\leq1\).
		Using~\eqref{eq:prescribed-cotype-power-error}, we obtain
		\[
		\left(\frac{x}{y}\right)^p\psi_n(y)-\psi_n(x)
		\leq x^p-\psi_n(x)
		\leq a_n.
		\]
		
		Hence
		\(D_n(p,1,1)\leq a_n\) and \(\sum_{n=1}^{\infty}D_n(p,1,1)<\infty\).
		Proposition~\ref{prop:weighted-cotype-defect}
		therefore gives cotype \(p\).
		
		To prove optimality, fix \(2\leq q<p\),
		\(C\geq1\), and \(\eta>0\).
		Choose
		\[
		y=\min\{1,\eta^{1/p}\},
		\qquad
		z=(2C)^{-1/(p-q)}.
		\]
		Then \(0<z<1\), \(\psi_n(y)\leq y^p\leq\eta\),
		and \(Cz^p=z^q/2\).
		Evaluating the defect at \(x=zy\) and using
		\eqref{eq:prescribed-cotype-power-error}, we obtain
		\[
			D_n(q,C,\eta)
			\geq z^q\psi_n(y)-C\psi_n(zy)
			\geq z^q(y^p-a_n)-Cz^py^p
			=\frac12y^pz^q-z^qa_n
			\geq\frac12y^pz^q-a_n.
		\]
		The positive constant \(c=\frac12y^pz^q\)
		is independent of \(n\).
		Since \(a_n\to0\), we have
		\(D_n(q,C,\eta)\geq c/2\) for all sufficiently
		large \(n\). Thus
		\(\sum_{n=1}^{\infty}D_n(q,C,\eta)=\infty\).
		
		As \(C\) and \(\eta\) were arbitrary,
		Proposition~\ref{prop:weighted-cotype-defect}
		excludes cotype \(q\) for every \(2\leq q<p\).
		
		Therefore
		\(\operatorname{cot}\bigl(\ell_{M_p}(w)\bigr)=p\),
		and this exponent is attained.
	\end{proof}
	
	\par

\subsection{The formula for weights in \(\mathcal A\)}

Define the global upper growth index by
\begin{equation}\label{eq:global-upper-index}
\beta_M^{\mathrm{glob}}
=\inf\left\{r\geq1:
\begin{array}{l}
\text{there exists }C\geq1\text{ such that}\\
M(\lambda t)\leq C\lambda^rM(t)
\text{ for all }t>0,\ \lambda\geq1
\end{array}\right\},
\end{equation}
with \(\inf\varnothing=\infty\). For weights in \(\mathcal A\), the summable errors in Proposition~\ref{prop:weighted-cotype-defect} can be eliminated. This gives the global growth characterization directly from the known Musielak--Orlicz cotype criterion.

\begin{corollary}\label{cor:weighted-cotype-A}
Let \(w\in\mathcal A\) and \(2\leq q<\infty\). Then \(\ell_M(w)\) has cotype \(q\) if and only if there exists \(C\geq1\) such that
\begin{equation}\label{eq:weighted-global-cotype}
M(\lambda t)\leq C\lambda^qM(t)
\qquad(t>0,\ \lambda\geq1).
\end{equation}
In particular,
\begin{equation}\label{eq:weighted-cotype-global-index}
\operatorname{cot}(\ell_M(w))
=\max\{2,\beta_M^{\mathrm{glob}}\}.
\end{equation}
If \(\beta_M^{\mathrm{glob}}<2\), the space has cotype \(2\). If \(2\leq\beta_M^{\mathrm{glob}}<\infty\), it has cotype \(\beta_M^{\mathrm{glob}}\) exactly when~\eqref{eq:weighted-global-cotype} holds with \(q=\beta_M^{\mathrm{glob}}\).
\end{corollary}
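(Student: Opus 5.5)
The plan is to work from Proposition~\ref{prop:weighted-cotype-defect}. The sufficiency direction needs no weights at all. If \eqref{eq:weighted-global-cotype} holds, then for every \(0<u\leq v\) we take \(t=u\) and \(\lambda=v/u\) to get \((u/v)^qM(v)\leq CM(u)\). Hence \(D_n(q,C,\eta)=0\) for every \(n\) and every \(\eta\), the series in \eqref{eq:weighted-cotype-test} is trivially summable, and Proposition~\ref{prop:weighted-cotype-defect} gives cotype \(q\).

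For necessity, assume cotype \(q\). Proposition~\ref{prop:weighted-cotype-defect} then supplies \(C\geq1\) and \(\eta>0\) with \(\sum_nD_n(q,C,\eta)<\infty\). Since \(w\in\mathcal A\), there is a subsequence \(z_k=w_{n_k}\to0\) with \(\sum_kz_k=\infty\). Fix \(0<u\leq v\) and put \(\Delta(u,v)=[(u/v)^qM(v)-CM(u)]_+\). Because \(z_k\to0\), the constraint \(z_kM(v)\leq\eta\) holds for all large \(k\). For those \(k\), the definition \eqref{eq:weighted-cotype-defect} gives \(D_{n_k}(q,C,\eta)\geq z_k\Delta(u,v)\). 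Summing over \(k\), the left side is finite while \(\sum_kz_k=\infty\), so \(\Delta(u,v)=0\). As \(u\leq v\) were arbitrary, this is exactly \eqref{eq:weighted-global-cotype} with the same constant \(C\). The key point is that the thresholds \(M^{-1}(\eta/z_k)\) tend to infinity, so the constraint eventually admits every pair; this is why the summable errors disappear for weights in \(\mathcal A\).

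For \eqref{eq:weighted-cotype-global-index}, we pass to infima over \(q\).
\begin{itemize}
\item If \(q\geq2\) and \(q>\beta_M^{\mathrm{glob}}\), pick \(r\) with \(\beta_M^{\mathrm{glob}}\leq r<q\) for which a global estimate of the form \eqref{eq:global-upper-index} holds. Since \(\lambda^r\leq\lambda^q\) for \(\lambda\geq1\), \eqref{eq:weighted-global-cotype} holds at \(q\), so the space has cotype \(q\).
\item Conversely, cotype \(q\) gives \eqref{eq:weighted-global-cotype} by the step above. Note \(q\geq2\geq1\), so \(q\) lies in the set defining \(\beta_M^{\mathrm{glob}}\), and therefore \(q\geq\beta_M^{\mathrm{glob}}\).
\end{itemize}
Together these give \(\operatorname{cot}(\ell_M(w))=\max\{2,\beta_M^{\mathrm{glob}}\}\), with the conventions \(\inf\varnothing=\infty\) on both sides.

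The endpoint statements follow directly. If \(\beta_M^{\mathrm{glob}}<2\), some admissible \(r<2\) exists, so \eqref{eq:weighted-global-cotype} holds with \(q=2\) and the space has cotype \(2\). If \(2\leq\beta_M^{\mathrm{glob}}<\infty\), apply the equivalence just proved at \(q=\beta_M^{\mathrm{glob}}\). The only delicate step is the limiting argument in the necessity direction. Once it is arranged to fix \((u,v)\) first and only then let \(k\) grow, everything else is routine bookkeeping.
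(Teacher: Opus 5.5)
Your proposal is correct and follows the paper's proof essentially step for step: sufficiency because the global estimate makes every \(D_n(q,C,\eta)\) vanish, necessity by fixing \(0<u\leq v\) first and then using a subsequence \(w_{n_k}\to0\) with \(\sum_kw_{n_k}=\infty\) to force \(\bigl[(u/v)^qM(v)-CM(u)\bigr]_+=0\), and the index formula from the upward closedness of the exponent set in \eqref{eq:global-upper-index}. Your explicit observation that cotype \(q\) places \(q\) in that set, so that \(q\geq\beta_M^{\mathrm{glob}}\), only spells out what the paper compresses into a single line.
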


\begin{proof}
Suppose that \(\ell_M(w)\) has cotype \(q\), and choose \(C\) and \(\eta\) as in Proposition~\ref{prop:weighted-cotype-defect}. By the definition of \(\mathcal A\), there is a subsequence \((w_{n_k})\) tending to zero with \(\sum_kw_{n_k}=\infty\). Fix \(0<u\leq v\). If
\[
B=\left(\frac{u}{v}\right)^qM(v)-CM(u)>0,
\]
then \(w_{n_k}M(v)\leq\eta\) for all sufficiently large \(k\), and consequently
\[
D_{n_k}(q,C,\eta)\geq B w_{n_k}.
\]
This contradicts \(\sum_nD_n(q,C,\eta)<\infty\). Hence \(B\leq0\) for every \(0<u\leq v\), which is~\eqref{eq:weighted-global-cotype} with \(t=u\) and \(\lambda=v/u\).

Conversely,~\eqref{eq:weighted-global-cotype} makes every term \(D_n(q,C,\eta)\) zero, for any \(\eta>0\). Proposition~\ref{prop:weighted-cotype-defect} then gives cotype \(q\).

The set of exponents in~\eqref{eq:global-upper-index} is upward closed. If \(\beta_M^{\mathrm{glob}}<\infty\), every \(q>\beta_M^{\mathrm{glob}}\) belongs to that set, and every member is at least \(\beta_M^{\mathrm{glob}}\). Taking the infimum over \(q\geq2\) proves~\eqref{eq:weighted-cotype-global-index}, and also gives the stated endpoint conclusions. If \(\beta_M^{\mathrm{glob}}=\infty\), the equivalence just proved excludes every finite cotype, so the same formula holds.
\end{proof}

In particular, for \(w\in\mathcal A\) a finite cotype exists exactly when \(M\) satisfies the global \(\Delta_2\) condition. Indeed, a global power estimate implies doubling, while iteration of a doubling estimate gives a global power estimate. Thus no separate global \(\Delta_2\) assumption is needed in Corollary~\ref{cor:weighted-cotype-A}.

\subsection{Explicit computations}

For \(M(t)=t^p\), where \(1\leq p<\infty\), the criterion can be evaluated for every positive weight sequence. Writing \(z=u/v\) and using \(w_nv^p\leq\eta\), we obtain
\begin{equation}\label{eq:power-cotype-defect}
D_n(q,C,\eta)=
\begin{cases}
\displaystyle
\eta\left(1-\frac qp\right)
\left(\frac{q}{Cp}\right)^{q/(p-q)},&2\leq q<p,\\[6pt]
0,&q\geq\max\{2,p\}.
\end{cases}
\end{equation}
For \(q<p\), the maximum of \(z^q-Cz^p\) over \([0,1]\) occurs at
\[
z=\left(\frac{q}{Cp}\right)^{1/(p-q)}.
\]
We may also choose \(w_nv^p=\eta\). Hence the positive expression in~\eqref{eq:power-cotype-defect} is independent of \(n\), and its series diverges. For \(q\geq p\), one has \(z^q\leq z^p\leq Cz^p\), so every term is zero. Proposition~\ref{prop:weighted-cotype-defect} therefore gives
\[
\operatorname{cot}(\ell_{t^p}(w))=\max\{2,p\},
\]
and this cotype is attained for every positive \(w\).

The next computation shows why the weight condition matters when the growth of \(M\) changes between zero and infinity.

\begin{example}\label{ex:weighted-cotype-dependence}
Fix \(p>2\), and put
\[
M_p(t)=
\begin{cases}
t^p,&0\leq t\leq1,\\
pt-(p-1),&t>1.
\end{cases}
\]
Then \(M_p\) is a continuously differentiable convex Orlicz function, and
\[
\operatorname{cot}(\ell_{M_p}(w))=
\begin{cases}
p,&w\in\mathcal A,\\
2,&\sum_nw_n<\infty.
\end{cases}
\]
Both displayed cotypes are attained. In particular, the same function \(M_p\) gives optimal cotype \(p\) for \(w_n=1/n\), and optimal cotype \(2\) for \(w_n=2^{-n}\).
\end{example}

\begin{proof}
The values and first derivatives match at \(t=1\), and the derivative is nondecreasing. Moreover,
\[
\frac{tM_p'(t)}{M_p(t)}
=\begin{cases}
p,&0<t\leq1,\\
\displaystyle\frac{pt}{pt-(p-1)},&t>1,
\end{cases}
\]
so this quotient is at most \(p\). Integration gives \(M_p(\lambda t)\leq\lambda^pM_p(t)\) for \(\lambda\geq1\). No global upper growth exponent smaller than \(p\) is possible: for \(0<u\leq1\), a bound \(M_p(1)\leq C u^{-r}M_p(u)\) with \(r<p\) would give \(1\leq C u^{p-r}\), a contradiction as \(u\to0\). Thus \(\beta_{M_p}^{\mathrm{glob}}=p\), with the bound attained, and Corollary~\ref{cor:weighted-cotype-A} applies when \(w\in\mathcal A\).

For summable weights, observe that
\[
pt-(p-1)\leq M_p(t)\leq pt\qquad(t\geq0).
\]
Therefore, for \(0<u\leq v\),
\[
\left(\frac uv\right)^2M_p(v)-M_p(u)
\leq pu-M_p(u)\leq p-1.
\]
It follows that \(D_n(2,1,\eta)\leq(p-1)w_n\) for every \(\eta>0\). The sum is finite, so Proposition~\ref{prop:weighted-cotype-defect} gives cotype \(2\), and hence optimal cotype \(2\).
\end{proof}

The summable-weight argument also applies at \(p=2\), recovering cotype \(2\) for the lacunary weighted space in Example~\ref{ex:lacunary-weighted}. In that example the conclusion also follows from \(b_{\widehat\Phi^w,(w_n),1}=1\) and Corollary~\ref{cor:cotype-upper-index}.
\par

	\section{Relation to Earlier Results}
	\label{sec:comparison-peirats-ruiz}
	
We compare the admissible-block construction and the corrected upper index with the limiting-family approach of Peirats and Ruiz \cite{Peiratz92} and the weighted-space methods of Fuentes and Hern\'andez \cite{FuentesHernandez88}. The comparison concerns growth assumptions, the choice of generators, and the role of summable errors.
	
	\subsection{Growth assumptions and the underlying spaces}
	
	Peirats and Ruiz work with a class that includes nonconvex Orlicz functions and locally bounded spaces. Their standing assumptions include uniform upper and lower bounds for the logarithmic derivatives of the generating functions. Written in our notation, these bounds have the form \(0<a\leq\frac{t\phi_n'(t)}{\phi_n(t)}\leq b<\infty, 0<t\leq1,\) with constants independent of \(n\).
	
In \cite{Peiratz92}, the uniform \(\Delta_2^*\) condition denotes the lower logarithmic-derivative bound above. For the convex differentiable generators considered here, \(t\phi_n'(t)\geq\phi_n(t)\), so a uniform lower bound of one is automatic.
	
Zlatanov \cite{Zlatanov07} gave a sufficient condition for the isomorphic embedding of \(\ell^p\)-spaces into Musielak--Orlicz sequence spaces. Theorem~\ref{th:upper-index} instead gives a necessary restriction for the specified generators under \(\delta_2\), with an additive summable-error term. The two criteria address different directions of the embedding question; neither yields the converse of the other in the present setting.

	The present paper is restricted to convex Orlicz functions and Banach sequence spaces. Within this setting, the main construction assumes the \(\delta_2\) estimate \({\phi_n(2t)\leq K\phi_n(t)+c_n}\) whenever \(\phi_n(t)\leq\delta\), where \((c_n)\) is nonnegative and summable. The compactness and block-sequence arguments apply directly to the given functions and do not require their differentiability.
	
	This distinction concerns the chosen representation of the space by its generating functions. Peirats and Ruiz also recall results of Woo concerning equivalent representations of separable convex modular sequence spaces by functions satisfying uniform growth conditions. Accordingly, the use of \(\delta_2\) here should be understood as providing direct control of the specified generators and their summable errors.
	
	\subsection{Classical and admissible limiting families}
	
	The classical family \(C_{\Phi,1}\), introduced in \cite{Peiratz92}, is obtained from finite convex combinations of normalized dilations over successive tails. Proposition~2.3 of that paper shows that if a space with a symmetric basis embeds into \(\ell_\Phi\), then it admits an Orlicz representation generated by an element of \(C_{\Phi,1}\).
	
	Our family \(\mathcal C_{\Phi,\tau}\) incorporates an additional admissibility requirement. Its approximating functions have the form
	\[
	g(t)=\frac{1}{A}\sum_{j\in E}\phi_j(\tau r_jt), \qquad A=\sum_{j\in E}\phi_j(\tau r_j),
	\]
	where \(\sum_{j\in E}\phi_j(r_j)=1\). Thus the coefficients in the corresponding convex combination are determined by the coordinates of a normalized block. Lemma~\ref{lm:admissible-inclusion} establishes the relation \(\mathcal C_{\Phi,\tau}\subseteq C_{\Phi,1}\). Remark~\ref{rem:strict-inclusion} shows that this inclusion can be strict: in the lacunary weighted example, the admissible family consists only of \(t\mapsto t\), whereas the classical family also contains \(t\mapsto t^2\).
	
	The estimate \(A\geq a>0\) in Lemma~\ref{lm:normalizing-constants} is essential when summable errors are present. It ensures that the normalized error terms are bounded by \(a^{-1}\sum_{j\geq n}c_j\), which tends to zero along the tails. This leads to the compactness result in Lemma~\ref{lm:compactness} and to the representation in Theorem~\ref{prop:1}, where the defining function belongs to \(\mathcal C_{\Phi,\tau}\).
	
	The reduction to blocks and the extraction of uniformly convergent subsequences follow the strategy of \cite{Peiratz92}. Lemma~\ref{lm:summable-uniform-perturbations} makes explicit the norm comparison underlying the summable approximation step used in that strategy. For the converse in Theorem~\ref{prop:1}\ref{th:1.2}, membership in every tail closure permits successive admissible blocks to approximate any \(\varphi\in\mathcal C_{\Phi,\tau}\) with summable uniform errors. The same norm comparison then constructs a copy of \(\ell_\varphi\) in \(\ell_\Phi\), giving the characterization by the admissible family.
	
	\subsection{Comparison of the upper indices}
	
	Proposition~2.4 of \cite{Peiratz92} gives lower and upper restrictions on the exponent of an embedded \(\ell^p\) space. In our notation, their upper index is \(b_\Phi^{\mathrm{PR}} =\limsup_{n\to\infty} \sup_{0<t\leq1} \frac{t\phi_n'(t)}{\phi_n(t)}.\) The index used in Theorem~\ref{th:upper-index} is instead \(b_{\Phi,(c_n),\delta} =\limsup_{n\to\infty} \sup_{0<t\leq\gamma_\delta} \frac{t\phi_n'(t)-c_n}{\phi_n(t)}.\) It is defined with respect to fixed \(\delta_2\)-data and includes the additive errors from the growth estimate.
	
	When \(c_n=0\) for every \(n\) and \(\delta=1\), normalization gives \(\gamma_1=1\), and the two upper indices coincide. In this case, Theorem~\ref{th:upper-index} recovers the upper restriction in \cite{Peiratz92}. For nonzero summable errors, the proof uses the corrected quotients \((\psi_k(t)+d_k)/t^r\), where \(d_k\to0\). The lower bound for the normalizing constants ensures that these corrections disappear in the limit. The resulting theorem addresses the upper restriction on \(p\); the two-sided estimate in \cite{Peiratz92} additionally uses a lower growth index.
	
	Example~\ref{ex:nonuniform-growth} illustrates the distinction between these indices. For its normalized generating sequence, \(\sup_{0<t\leq1}t\phi_n'(t)/\phi_n(t)=p_n\to\infty\), so the uniform upper growth assumption of \cite{Peiratz92} fails. For the specified summable error sequence, however, \(b_{\Phi,(c_n),1}=1\), and Theorem~\ref{th:upper-index} gives the exact restriction on the exponents of embedded \(\ell^p\) spaces. The example concerns direct applicability to the given generators: the resulting space is isomorphic to \(\ell^1\), which also admits a representation satisfying the uniform growth assumptions.
	
Example~\ref{ex:lacunary-weighted} also provides a quantitative comparison of the two upper indices for the same normalized generators. With \(u_n=2w_n/(1+w_n)\), their uncorrected logarithmic derivatives satisfy
\[
\frac{t(\widehat\phi_n^w)'(t)}{\widehat\phi_n^w(t)}
=
\begin{cases}
2,&0<t\leq u_n,\\[4pt]
\displaystyle\frac{(1+w_n)t}{(1+w_n)t-w_n},&u_n<t\leq1.
\end{cases}
\]
The second expression decreases from \(2\) as \(t\) increases from \(u_n\). Hence \(\sup_{0<t\leq1}t(\widehat\phi_n^w)'(t)/\widehat\phi_n^w(t)=2\) for every \(n\), so \(b_{\widehat\Phi^w}^{\mathrm{PR}}=2\). For the summable errors \(c_n=w_n\), Example~\ref{ex:lacunary-weighted} gives \({b_{\widehat\Phi^w,(w_n),1}}=1\). Thus the corrected index sharpens the numerical necessary bound from \(s\leq2\) to \(s\leq1\), consistently with the admissible family \(\{t\mapsto t\}\).
\par

	\subsection{Weighted Orlicz and Nakano spaces}
	
	The disjoint representation in Theorem~\ref{th:LMinfty} and the weighted characterization in Theorem~\ref{th:weighted-Orlicz-characterization} are included as known results corresponding to \cite{Peiratz92}. They connect the admissible-family construction with the classical theory of Orlicz function spaces and universal weight sequences. Theorem~\ref{th:weighted-Orlicz-copy-necessary} applies the equivalence in Theorem~\ref{prop:1}\ref{th:1.2} to the normalized weighted generators and yields the corresponding embedding characterization.
	
The variable-exponent results of Hern\'andez and Ruiz \cite{HernandezRuiz12} concern, in particular, lattice-isomorphic copies in Nakano function spaces. The recent study by Ansorena and Bello \cite{AnsorenaBello26} focuses on subsymmetric and complemented basic sequences in variable-exponent Lebesgue spaces. Our alternating-exponent example computes the admissible family for a sequence space and concerns isomorphic embeddings without a complementability hypothesis; its sharper exponent restriction is already known from Peirats and Ruiz \cite{Peiratz92}.

	For Nakano spaces, \cite{Peiratz92} gives a general characterization: under boundedness of the exponents away from zero and infinity, \(\ell^s\) embeds if and only if \(s\) is the limit of a subsequence of the exponent sequence. Proposition~\ref{prop:nakano-two-powers} is its special case for two alternating exponents. Example~\ref{ex:nakano-limiting-family} complements this known characterization by explicitly computing the admissible family \(\mathcal C_{\Phi,\tau}\) and illustrating its role in the present construction.

The cotype computation in Section~\ref{sec:nakano-cotype} recovers the scalar criterion of Blasco and Gregori \cite[Theorem~4.3]{BlascoGregori01} from the Musielak--Orlicz characterization of Katirtzoglou \cite[Theorem~9(a)]{Katirtzoglou98}. Its explicit minimal errors distinguish attainment of the optimal cotype from the value of its infimum. The criterion is included as a known result with a direct computation.
	
	\subsection{Comparison with Fuentes and Hern\'andez}
	
Hern\'andez and Peirats \cite{HernandezPeirats88} studied \(\ell^p\)-isomorphic weighted sequence subspaces of Orlicz function spaces. Our weighted result, Theorem~\ref{th:weighted-Orlicz-copy-necessary}, characterizes arbitrary isomorphic embeddings of Orlicz sequence spaces into \(\ell_M(w)\), assuming that the normalized weighted generators satisfy \(\delta_2\). The criterion is equivalence at zero to a member of the admissible family \(\mathcal C_{\widehat\Phi^w,\tau}\).

	Fuentes and Hern\'andez \cite{FuentesHernandez88} study weighted Orlicz sequence spaces, with particular emphasis on summable weights and the relation between growth at infinity and the geometry of the resulting spaces. Their results concern natural inclusions, separability, copies of \(\ell^\infty\) and \(c_0\), reflexivity, and \(B\)-convexity. These questions complement the study of embedded Orlicz sequence spaces and finite exponents undertaken here.
	
	For summable weights satisfying \eqref{eq:FH-weight-condition}, Fuentes and Hern\'andez \cite[Theorem~3]{FuentesHernandez88} characterize separability, equality with \(h_M(w)\), and the absence of an isomorphic copy of \(\ell^\infty\) through the \(\Delta_2\) condition for \(M\) at infinity. Under the same condition on the weights and the assumptions on the complementary function stated there, Proposition~6 characterizes reflexivity and related geometric properties. Our embedding characterization assumes \(\widehat\Phi^w\in\delta_2\) directly and does not impose this condition on the weights. The conclusions are different, however, so the present results should not be viewed as a generalization of those geometric characterizations.
	
	The normalization \(\widehat\phi_n^w(t)=w_nM(M^{-1}(1/w_n)t)\) already appears in the proof of \cite{FuentesHernandez88}. We use the same diagonal identification to apply the admissible-family construction to weighted spaces. Also, Proposition~1 of that paper concerns the natural inclusion between two weighted sequence spaces with the same weights. This should be distinguished from the existence of an arbitrary isomorphic embedding considered in Theorem~\ref{th:weighted-Orlicz-copy-necessary}.
	
The summable approximation method of \cite[Examples~1 and~2]{FuentesHernandez88} underlies Lemma~\ref{lm:summable-uniform-perturbations}, which treats approximation by an arbitrary normalized Orlicz function. Lemma~\ref{lm:power-singleton-family} additionally identifies the admissible family as \(\{t\mapsto t^p\}\), giving \(\{t\mapsto t\}\) and \(\{t\mapsto t^2\}\) for those examples.
	
	Fuentes and Hern\'andez explicitly discuss the failure of uniform growth conditions to be preserved under equivalent representations of a modular sequence space. Thus the absence of uniform upper growth for a given generating sequence does not by itself distinguish our results from theirs. The specific role of the summable errors in the present work is to control admissible normalized blocks and to define the corrected upper index.
	
Example~\ref{ex:lacunary-weighted} illustrates the corrected index for summable weights that fail \eqref{eq:FH-weight-condition}. Its identification with \(\ell^1\) uses the summable approximation method of \cite{FuentesHernandez88}, whereas the additional conclusions concern the corrected index and the admissible limiting family.
	
	\section{Conclusion}
Under \(\delta_2\), the admissible limiting family represents embedded symmetric bases and characterizes embedded Orlicz sequence spaces through equivalence at zero. The converse is realized by successive admissible blocks, and diagonal normalization gives the weighted characterization. Under the stated differentiability assumptions, the corrected upper index gives the necessary bound \(p\leq b_{\Phi,(c_n),\delta}\). The same index also yields cotype \(q\) for every \(q\geq2\) with \(q>b_{\Phi,(c_n),\delta}\); in particular, \(b_{\Phi,(c_n),\delta}<2\) implies cotype \(2\). The embedding bound is exact in the two examples isomorphic to \(\ell^1\), while the alternating Nakano example shows that the family criterion can be sharper. The construction builds on the methods of Peirats and Ruiz \cite{Peiratz92} and the summable approximation method of Fuentes and Hern\'andez \cite{FuentesHernandez88}. The index records \(\Phi\), \((c_n)\), and \(\delta\). For Nakano spaces, the exact summability criterion recovers \(\operatorname{cot}(\ell^{(p_n)})=\max\{2,\limsup_n p_n\}\) and determines whether this infimum is attained. For weighted spaces, the cotype criterion takes the form of a summability test; for \(w\in\mathcal A\), it reduces to \(\operatorname{cot}(\ell_M(w))=\max\{2,\beta_M^{\mathrm{glob}}\}\). The explicit computations show that the weights can change the optimal cotype when the growth at zero differs from the growth at infinity. The family criterion is necessary and sufficient; no sufficiency for an \(\ell^p\)-embedding is asserted for the index inequality alone.

\end{document}